\documentclass[11pt]{article}
\usepackage{amsthm, amsmath, amssymb, amsfonts, url, booktabs, tikz, setspace, fancyhdr, bm}
\usepackage{hyperref}
\usepackage{geometry}
\geometry{verbose,tmargin=2.1cm,bmargin=2.1cm,lmargin=2.4cm,rmargin=2.4cm}
\usepackage{hyperref, enumerate}
\usepackage[shortlabels]{enumitem}
\usepackage[babel]{microtype}
\usepackage[english]{babel}
\usepackage[capitalise]{cleveref}
\usepackage{comment}
\usepackage{bbm}
\usepackage{csquotes}
\usepackage{mathabx}
\usepackage{tikz}
\usepackage{graphicx}
\usepackage{float}

\counterwithin{figure}{section}

\newtheorem{theorem}{Theorem}[section]
\newtheorem{prop}[theorem]{Proposition}
\newtheorem{lemma}[theorem]{Lemma}
\newtheorem{cor}[theorem]{Corollary}
\newtheorem{conj}{Conjecture}
\newtheorem{claim}[theorem]{Claim}

\theoremstyle{definition}
\newtheorem{defn}[theorem]{Definition}
\newtheorem*{defn-non}{Definition}

\newtheorem{rmk}[theorem]{Remark}


\newenvironment{poc}{\begin{proof}[Proof of claim]}{\end{proof}}

\usepackage{todonotes}

\newcommand{\ex}{\mathrm{ex}}
\newcommand{\cP}{\mathcal{P}}
\newcommand{\cQ}{\mathcal{Q}}

\title{Extremal number of graphs from geometric shapes}
\author{
Jun Gao\thanks{Extremal Combinatorics and Probability Group (ECOPRO), Institute for Basic Science (IBS), Daejeon, South Korea. Emails: {\texttt \{jungao, hongliu, zixiangxu\}@ibs.re.kr}. Supported by IBS-R029-C4.}
\and 
Oliver Janzer\thanks{Department of Pure Mathematics and Mathematical Statistics, University of Cambridge, United Kingdom. Email: oj224@cam.ac.uk.}
\and
Hong Liu\footnotemark[1]
\and
Zixiang Xu\footnotemark[1]
}

\begin{document}
\maketitle

\begin{abstract}
    We study the Tur\'{a}n problem for highly symmetric bipartite graphs arising from geometric shapes and periodic tilings commonly found in nature.
    \begin{enumerate}
        \item The prism $C_{2\ell}^{\square}:=C_{2\ell}\square K_{2}$ is the graph consisting of two vertex disjoint $2\ell$-cycles and a matching pairing the corresponding vertices of these two cycles. We show that for every $\ell\ge 4$, ex$(n,C_{2\ell}^{\square})=\Theta(n^{3/2})$. This resolves a conjecture of He, Li and Feng.
        \item The hexagonal tiling in honeycomb is one of the most natural structures in the real world. We show that the extremal number of honeycomb graphs has the same order of magnitude as their basic building unit 6-cycles.
        \item We also consider bipartite graphs from quadrangulations of the cylinder and the torus. We prove near optimal bounds for both configurations. In particular, our method gives a very short proof of a tight upper bound for the extremal number of the 2-dimensional grid, improving a recent result of Brada\v{c}, Janzer, Sudakov and Tomon.
    \end{enumerate}
    Our proofs mix several ideas, including shifting embedding schemes, weighted homomorphism and subgraph counts and asymmetric dependent random choice.
\end{abstract}
\begin{figure}[htbp]
\centering
\begin{minipage}[t]{0.32\textwidth}
\centering
\includegraphics[width=5cm]{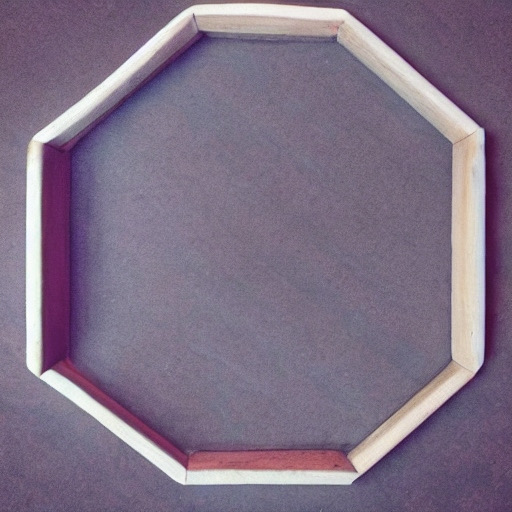}
\end{minipage}
\begin{minipage}[t]{0.32\textwidth}
\centering
\includegraphics[width=5cm]{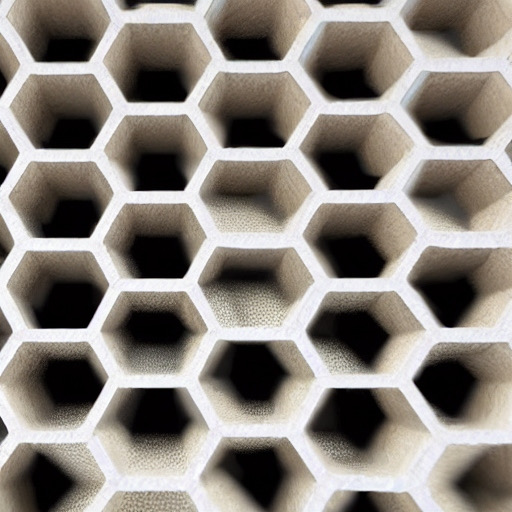}
\end{minipage}
\begin{minipage}[t]{0.32\textwidth}
\centering
\includegraphics[width=5.45cm]{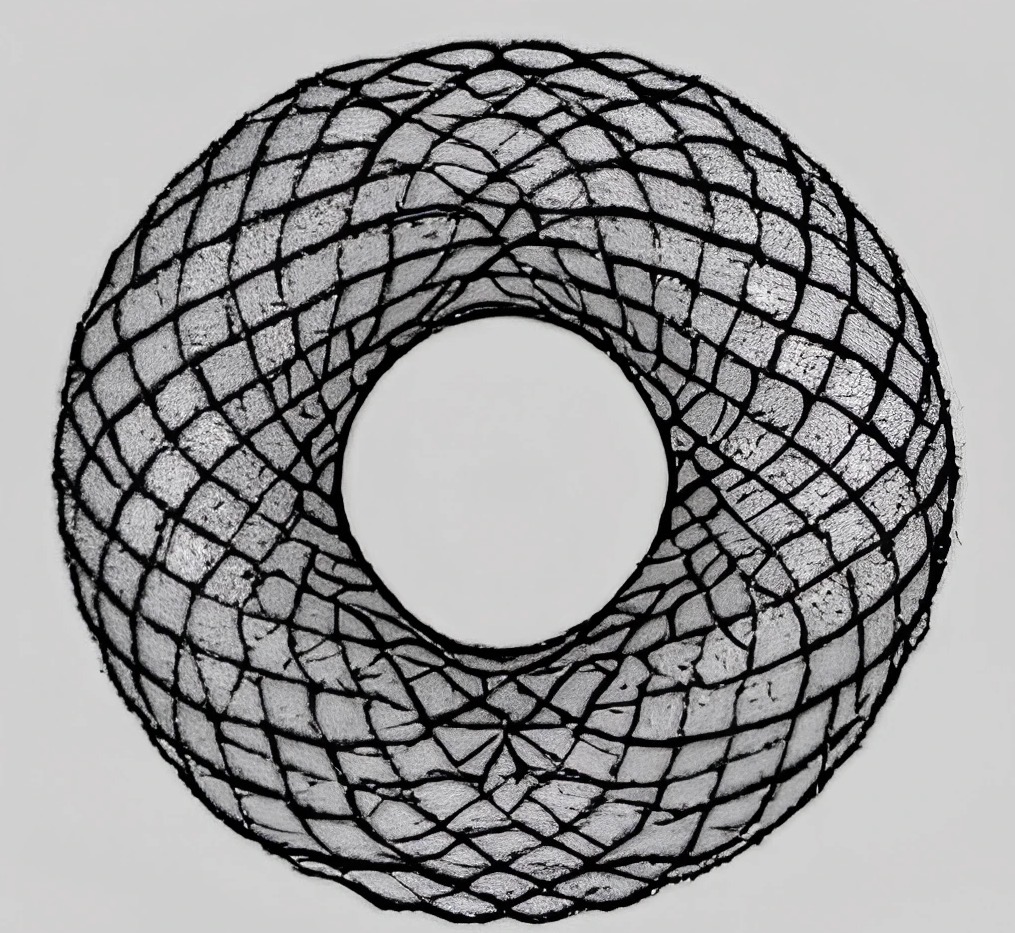}
\end{minipage}
\end{figure}

\section{Introduction}
Understanding symmetric structures is a prominent objective across various branches of mathematics due to their exceptional regularity and aesthetic appeal. Studies on such structures not only serve as a pursuit of beauty but also form the foundation for several fields of mathematics. In this paper, we investigate the Tur\'{a}n problem for highly symmetric bipartite graphs corresponding to various geometric structures.

The Tur\'{a}n problem, one of the most central topics in extremal combinatorics, is concerned with determining the maximum possible density a graph can have without containing a given graph as a subgraph. Its origin dates back to the result of Mantel~\cite{1907Mantel} in $1907$, stating that every $n$-vertex triangle-free graph has at most $\frac{n^{2}}{4}$ edges, and the work of Erd\H{o}s~\cite{1944Erdos} that connects the study of Sidon sets with the Tur\'an problem for 4-cycles. Tur\'{a}n~\cite{1941Turan} in 1941 extended Mantel's theorem to graphs without a copy of the clique $K_r$. In the same paper, Tur\'an proposed the study of the five graphs corresponding to platonic solids, and his result covers the tetrahedron graph $K_4$. The problem of octahedron, dodecahedron and icosahedron graphs were later resolved by Erd\H{o}s and Simonovits~\cite{1971ErdosSimonovits} and by Simonovits~\cite{1974DMSimonovits,1974JCTBSimonovits} respectively, while the innocent looking cube graph remains elusive. Apart from the aforementioned study of Sidon sets in combinatorial number theory, the Tur\'an problem has connections to other fields and during its course of advancement, many tools and methods have been developed, see e.g. the use of (random) algebraic and finite geometric structures~\cite{1999ProjectiveNorm,Brown1966,2015BLMSBukh,2018JEMSBukh,Erdos1966,1996Norm,1995BAMS,WengerC4C6C10} for lower bound constructions, and applications of Tur\'an type results in discrete geometry~\cite{1992CPCHull,1946AMMErdos} and information theory~\cite{2022TIT,2018TITCaching,1997TIT}. 

Formally, for a graph $F$, the \emph{extremal number of $F$}, denoted by $\ex(n,F)$, is the maximum number of edges in an $n$-vertex graph not containing $F$ as a subgraph. The Erd\H{o}s-Stone-Simonovits Theorem~\cite{1966ESS, 1946ErdosBAMS}, considered to be a fundamental theorem in extremal graph theory, states that
$$\ex(n,F)=\bigg(1-\frac{1}{\chi(F)-1}+o(1)\bigg)\binom{n}{2},$$
where $\chi(F)$ is the chromatic number of $F$. This theorem asymptotically solves the problem when $\chi(F)\ge 3$. However, for bipartite graphs, not even the order of magnitude is known in general. It also explains why the cube graph is the most difficult among the five platonic solids as it is the only one which is bipartite.

Two fundamental classes of bipartite graphs with high symmetry are even cycles and complete bipartite graphs. For the even cycle $C_{2\ell}$, Bondy and Simonovits~\cite{1974BondyEvenCycle} gave an upper bound $\ex(n,C_{2\ell})=O_{\ell}(n^{1+1/\ell})$, see~\cite{2021EJCEvenCycle} for the current best bound. Matching lower bound constructions are only known for the $4$-, $6$-, and $10$-cycle, utilizing generalized polygons from finite geometry, see~\cite{Brown1966,Erdos1966,WengerC4C6C10}. The order of magnitude of $\ex(n,C_{2\ell})$ is still unknown for any $\ell\notin\{2,3,5\}$. For the complete bipartite graphs $K_{s,t}$, a well-known result of K\H ov\'ari, S\'{o}s and Tur\'{a}n~\cite{1954KST} in 1954 showed that $\textup{ex}(n,K_{s,t})=O(n^{2-1/s})$ for any integers $t\geqslant s$. Erd\H{o}s, R\'{e}nyi and S\'{o}s~\cite{Erdos1966} and Brown~\cite{Brown1966} respectively proved matching lower bounds for the cases $s=2$ and $s=3$. For larger $s$, Koll\'{a}r, R\'{o}nyai and Szab\'{o}~\cite{1996Norm} first showed that $\text{ex}(n,K_{s,t})=\Omega(n^{2-1/s})$ when $t\geqslant s!+1$, and then the bound on $t$ was improved to $t\geqslant (s-1)!+1$ by Alon, R\'{o}nyai and Szab\'{o}~\cite{1999ProjectiveNorm}. The current best dependence $t\ge 9^{s}\cdot s^{4s^{2/3}}$ was shown by Bukh~\cite{2021arxivBukh}. For more on the bipartite Tur\'an problem, we refer the readers to the comprehensive survey of F\"{u}redi and Simonovits~\cite{2013Survey}.

In this paper, we continue this line of study and determine the order of magnitude of the extremal number for highly symmetric bipartite graphs stemming from certain geometric shapes and periodic tilings.

\subsection{The prisms}
The \emph{$2\ell$-prism} $C_{2\ell}^{\square}:=C_{2\ell}\square K_{2}$ is the Cartesian product of the $2\ell$-cycle and an edge. In other words, $C_{2\ell}^{\square}$ consists of two vertex disjoint $2\ell$-cycles and a matching joining the corresponding vertices on these two cycles. As $C_{2\ell}^{\square}$ contains many $4$-cycles, we have a lower bound $\ex(n,C_{2\ell}^{\square})\ge \ex(n,C_4)=\Omega(n^{3/2})$. Note that $C_{4}^{\square}$ is the notorious cube graph, and the best known upper bound is $\ex(n,C_{4}^{\square})= O(n^{8/5})$~\cite{1970Erdos, 2005Sharir}. Studying $C_{2\ell}^{\square}$ could shed some light on the cube problem. An upper bound $\ex(n,C_{2\ell}^{\square})=O(n^{5/3})$ can be easily obtained via the celebrated dependent random choice method~\cite{AKS03,2011DRCSurvey}.

Very recently, He, Li and Feng~\cite{2023arxivOddCylinder} studied the odd prisms, determined $\ex(n,C_{2k+1}^{\square})$ for any $k\ge 1$ for large $n$ and characterized
the extremal graphs. They proposed the following conjecture to break the $5/3$ barrier for $2\ell$-prism.
\begin{conj}[\cite{2023arxivOddCylinder}]\label{conj:NewConj2023}
    For every $\ell\ge 2$, there exists $c=c(\ell)>0$ such that $\ex(n,C_{2\ell}^{\square})=O(n^{5/3-c})$.
\end{conj}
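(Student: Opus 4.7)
I would split the conjecture into two regimes, $\ell = 2$ and $\ell \ge 3$, which together cover every $\ell \ge 2$. For $\ell = 2$ the prism $C_{4}^{\square}$ is the $3$-dimensional cube graph $Q_{3}$, and the excerpt itself cites the classical bound $\ex(n, C_{4}^{\square}) = O(n^{8/5})$ of Erd\H{o}s and Sharir. Since $8/5 = 5/3 - 1/15$, the conjecture already holds in this case with $c(2) = 1/15$, with no new work needed.

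For $\ell \ge 3$ I would prove the stronger bound $\ex(n, C_{2\ell}^{\square}) = O(n^{3/2})$, which yields $c(\ell) = 1/6$ and matches the trivial lower bound $\ex(n, C_{2\ell}^{\square}) \ge \ex(n, C_{4}) = \Theta(n^{3/2})$. The approach is to reduce the problem to finding a short cycle in an \emph{edge auxiliary graph} $H = H(G)$ defined by $V(H) := E(G)$ and declaring $e = uv$ and $f = u'v'$ to be adjacent in $H$ iff the four labels $u,v,u',v'$ are distinct and both $uu'$ and $vv'$ are edges of $G$. Any $C_{2\ell}$ in $H$ whose $2\ell$ vertices (edges of $G$) use $4\ell$ distinct endpoints in $G$ is a copy of $C_{2\ell}^{\square}$ in $G$. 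Letting $m := e(G)$, a convexity computation on paths of length two gives $\Omega(m^{4}/n^{4})$ labelled copies of $C_{4}$ in $G$, so $e(H) \ge c\, m^{4}/n^{4}$. Applying the Bondy--Simonovits theorem to $H$ produces a $C_{2\ell}$ as soon as $m \ge C_{\ell} \cdot n^{4\ell/(3\ell-1)}$, and one checks that $4\ell/(3\ell-1) \le 3/2$ for every $\ell \ge 3$. A supersaturation strengthening then yields $\Omega(m^{3\ell+1}/n^{4\ell})$ copies of $C_{2\ell}$ in $H$ once $m \ge C_{\ell} n^{3/2}$.

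To turn one of these $C_{2\ell}$s in $H$ into an honest vertex-non-repeating copy of $C_{2\ell}^{\square}$ in $G$, I would carry out a case analysis on the patterns of coincidences among the $4\ell$ endpoints. After a standard preliminary regularization forcing $\Delta(G) = O(m/n)$, each collision pattern is controlled by a sum of the form $\sum_{v} d_{v}^{2} \cdot (\text{number of completions in } H)$, and one tries to check that each forced coincidence saves at least one factor of $n$ against the main count $m^{3\ell+1}/n^{4\ell}$.

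The main obstacle is precisely this degeneracy step for small $\ell$. For $\ell = 3$, the Bondy--Simonovits threshold $n^{4\ell/(3\ell-1)}$ lands exactly at the target $n^{3/2}$, so there is no slack for a naive enumeration of colliding configurations -- the crude collision bound is of the same order as the supersaturation count. This is where I expect the \emph{shifting embedding scheme} announced in the abstract to be indispensable: rather than counting bad cycles in $H$, one constructs a long walk in $H$, shifts its starting point cyclically, and uses a pigeonhole on the shift index to extract a closed subwalk whose lifts to $G$ are forced to be vertex-disjoint. For $\ell \ge 4$ the threshold $n^{4\ell/(3\ell-1)}$ is strictly smaller than $n^{3/2}$, so the slack absorbs degeneracies by the crude count -- consistent with the authors only claiming the sharp $\Theta(n^{3/2})$ bound for $\ell \ge 4$ in the abstract while the conjecture itself is resolved for all $\ell \ge 2$.
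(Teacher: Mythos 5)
Your reduction for $\ell=2$ (the cube bound $O(n^{8/5})$) is exactly what the paper does, but the rest of the plan has two genuine gaps. First, for $\ell=3$ you claim $\ex(n,C_{6}^{\square})=O(n^{3/2})$; this is precisely the problem the paper leaves open (it is restated as a conjecture in the concluding remarks), and your proposed fix --- building a long walk in the auxiliary graph, shifting the starting point and pigeonholing to force vertex-disjoint lifts --- is an aspiration, not an argument: nothing in the outline explains why the extracted closed subwalk would avoid repeated $G$-vertices. The paper confirms the conjecture for $\ell=3$ differently, by settling for the weaker bound $\ex(n,C_{6}^{\square})=O(n^{21/13}(\log n)^{24/13})$ (appendix remark), which suffices since $21/13<5/3$; your plan has no such fallback, so as written the case $\ell=3$ is unproven.

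Second, for $\ell\ge 4$ the claim that ``the slack absorbs degeneracies by the crude count'' fails quantitatively. After regularization ($\Delta(G)=O(d)$, $d=\Theta(m/n)$), your auxiliary graph $H$ has $N=\Theta(nd)$ vertices and average degree $D=\Theta(d^{3}/n)$, so at $m=\Theta(n^{3/2})$ supersaturation gives only about $D^{2\ell}=\Theta_{\ell}(n^{\ell})$ copies of $C_{2\ell}$ in $H$, whereas the naive bound on colliding cycles (fix the shared $G$-vertex, fix the two $G$-edges through it, and complete a walk in $H$ using $\Delta(H)=O(d^{2})$ per step) is of order $nd^{4\ell-2}=\Theta(n^{2\ell})$, which swamps the main term for every $\ell$; a coincidence does not ``save a factor of $n$''. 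To make this route work one must count degenerate cycles relative to the actual $H$-degrees, i.e.\ show that for each $H$-vertex only a small fraction of its $H$-neighbours contain any fixed $G$-vertex. That is exactly what the paper's appendix does via the thin/thick $4$-cycle dichotomy (thick $4$-cycles are handled by a separate embedding argument, Lemmas~\ref{lem:p2timespt} and~\ref{lem:large codegree rare}, which crucially uses $C_{2\ell}^{\square}$-freeness rather than counting) together with Janzer's conflict-free homomorphic-cycle lemma (Lemma~\ref{lem:UpperBoundHomC2k}); and even then the quantitative condition $\beta\lesssim v(H)^{-1/\ell}$ forces $\ell\ge 7$. For $4\le\ell\le 6$ the paper abandons the auxiliary-graph route entirely and proves Theorem~\ref{thm:Main1} by the Section~\ref{sec:prisms} machinery (clean subgraphs, weighted counts of nice copies of $P_{\ell+1}^{\square}$, rich $4$-tuples, and a convexity gluing step). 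So your outline is essentially the paper's appendix argument, but pushed into a range ($3\le\ell\le 6$) where that argument is known to need substantially more than crude counting.
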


Our first result provides an optimal upper bound for $C_{2\ell}^{\square}$ for every $\ell\ge 4$.

\begin{theorem}\label{thm:Main1}
  For any integer $\ell\geqslant 4$, we have 
  \begin{equation*}
    \ex(n,C_{2\ell}^{\square})=\Theta_{\ell}(n^{3/2}).
  \end{equation*}
\end{theorem}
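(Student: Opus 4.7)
The lower bound $\ex(n,C_{2\ell}^{\square})=\Omega(n^{3/2})$ is immediate: every ``face'' $u_iu_{i+1}v_{i+1}v_i$ of the prism is a $4$-cycle, so $C_4\subseteq C_{2\ell}^{\square}$ and hence $\ex(n,C_{2\ell}^{\square})\ge \ex(n,C_4)=\Omega(n^{3/2})$ via the classical Erd\H{o}s--R\'enyi--S\'os construction. The content is the matching upper bound, which I plan to attack via a shifting-style embedding argument rather than dependent random choice (the latter only yields $O(n^{5/3})$).

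Suppose $G$ is an $n$-vertex graph with $e(G)\ge C_\ell n^{3/2}$ for a large constant $C_\ell=C(\ell)$; I would show $G$ must contain a copy of $C_{2\ell}^{\square}$. First, via standard degree cleaning combined with a dyadic partition on degree classes, I would pass to a subgraph $G'$ that is essentially regular with $\delta(G'),\Delta(G')=\Theta(\sqrt n)$, retaining at least a constant fraction of the edges. A Cauchy--Schwarz computation within $G'$ shows that the total number of $4$-cycles is $\Omega_\ell(n^2)$; iteratively deleting any edge $uv$ lying in few $C_4$'s then yields a nonempty subgraph $G''$ in which every edge is contained in at least $c(\ell)\sqrt n$ distinct $C_4$'s. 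With this ``$C_4$-richness'' in hand, I would attempt to embed $C_{2\ell}^{\square}$ by iteratively building a \emph{ladder} $e_1=u_1v_1,\,e_2=u_2v_2,\,\ldots,\,e_{2\ell-1}=u_{2\ell-1}v_{2\ell-1}$ of vertex-disjoint matching edges in which consecutive pairs $e_{i-1},e_i$ are opposite edges of a $C_4$. The $C_4$-richness supplies $\ge c(\ell)\sqrt n$ candidate extensions at each step, while the at most $O(\ell)$ previously-used vertices forbid only $O(\ell\sqrt n)$ candidate edges via the max-degree bound; for $C_\ell$ sufficiently large the extension goes through up to $i=2\ell-1$.

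The crux of the argument is the \emph{closing step}: the final matching edge $e_{2\ell}=u_{2\ell}v_{2\ell}$ must simultaneously form a $C_4$ with both $e_{2\ell-1}$ and $e_1$, i.e.\ one needs $u_{2\ell}\in N(u_1)\cap N(u_{2\ell-1})$ and $v_{2\ell}\in N(v_1)\cap N(v_{2\ell-1})\cap N(u_{2\ell})$. A purely greedy construction has no control over the closing-back conditions involving $e_1$. To resolve this, my plan is to employ a \emph{shifting embedding scheme}: rather than committing to $e_1$ at the outset, one maintains throughout the construction a large reservoir $\mathcal{S}$ of candidate initial edges $(u',v')$ that remain compatible with the current partial ladder, and at each extension step updates $\mathcal{S}$ to its sub-reservoir still consistent with the newly chosen extension. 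The critical lemma is that $\mathcal{S}$ is still large after all $2\ell-1$ extensions; I would prove this via a weighted double counting on near-embeddings of $C_{2\ell}^{\square}$, potentially combined with the fact that $C_{2\ell}^{\square}=C_{2\ell}\,\square\,K_2$ is a Cartesian product of Sidorenko graphs and hence itself Sidorenko. The main difficulty, which I expect to be the decisive step separating the sharp $O(n^{3/2})$ bound from the earlier $O(n^{5/3})$ one, lies in quantifying how the closing-back conditions interact with the vertex-disjointness requirement as $\ell$ grows, and this is precisely where the careful bookkeeping of the shifting scheme is essential.
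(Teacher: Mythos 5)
Your lower bound and the initial reductions (almost-regularization, supersaturation of $C_4$'s, passing to a subgraph in which every edge lies in $\Omega(\sqrt n)$ four-cycles) are fine and agree with the paper's setup. However, the proposal is not a proof: the step you yourself identify as the crux --- closing the ladder back up --- is left entirely unresolved. You say you ``would'' maintain a reservoir of candidate initial edges and ``would prove'' that it stays large ``via a weighted double counting\ldots potentially combined with'' Sidorenko-type inequalities, but no such lemma is stated, let alone proved, and it is exactly here that all the difficulty lives. Greedy $C_4$-richness gives you an open ladder $P_{\ell+1}^{\square}$ easily (this is essentially the paper's Lemma 5.3), but a $C_{2\ell}^{\square}$-free graph can contain enormously many such ladders; the whole problem is showing that not all of the (homomorphic) closed ladders are degenerate, and the Sidorenko property of $C_{2\ell}^{\square}$ cannot distinguish degenerate from non-degenerate copies.

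For comparison, the paper's resolution is quite different from your reservoir sketch and requires machinery your plan does not contain. Each copy of $P_{\ell+1}^{\square}$ is weighted by $1/\prod_i \max(d(x_{i-1},y_i),d^2/n)$; one restricts to \emph{nice} copies, meaning all the relevant codegrees are at most $C_0 d^{1/2}$ and no $4$-tuple $(x_{i-2},y_{i-2},x_i,y_i)$ is \emph{rich} (admits $4\ell$ disjoint connecting edges). Controlling the non-nice copies requires Lemmas 2.7--2.8 (embedding $P_t^{\square}$ into asymmetric bipartite graphs to show large-codegree and rich configurations are rare in a $C_{2\ell}^{\square}$-free graph). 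Then two nice copies of $P_{\ell+1}^{\square}$ sharing both end-rungs are glued by a convexity (Cauchy--Schwarz) argument to give total weight $\Omega_\ell(d^{2\ell})$ on nice homomorphic copies of $C_{2\ell}^{\square}$, while a case analysis over all possible coincidences $x_i=x_i'$ or $y_i=y_i'$ (Lemmas 5.10--5.12), using the inverse-codegree weights and the non-richness crucially, bounds the degenerate contribution by $O_{K,\ell}(nd^{2\ell-2})$. Since $d\ge Cn^{1/2}$, this forces a genuine copy. Without the codegree weighting and the rich-tuple analysis, the degenerate copies can swamp the count, so the gap in your closing step is not a bookkeeping detail but the substance of the theorem.
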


We remark that larger prisms are easier to handle. In Subsection~\ref{shorter proof}, we provide a shorter and different proof of $\ex(n,C_{2\ell}^{\square})=O_{\ell}(n^{3/2})$ for $\ell\ge 7$, which also yields $\ex(n,C_{6}^{\square})=O(n^{21/13}(\log{n})^{24/13})$. This, together with the known bound for the cube and Theorem~\ref{thm:Main1}, confirms Conjecture~\ref{conj:NewConj2023}.

It is worth mentioning that Erd\H{o}s offered \$250 for a proof and \$500 for a counterexample of the following conjecture. A graph is \emph{$r$-degenerate} if each of its subgraphs has minimum degree at most $r$.
\begin{conj}[\cite{1981ESconj}]\label{conj:r=2}
    Let $H$ be a bipartite graph. Then $\ex(n,H)=O(n^{3/2})$ if and only if $H$ is $2$-degenerate.
\end{conj}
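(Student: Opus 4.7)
This is a famous open conjecture of Erd\H{o}s, and we do not resolve it here; we only sketch a natural approach to each direction.

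For the \emph{``only if''} direction, suppose $H$ contains a subgraph $H'$ of minimum degree at least $3$, so in particular $\ex(n,H) \geq \ex(n,H')$. The plan is to attack $\ex(n,H')$ by probabilistic deletion in $G(n,p)$: taking $p = n^{-\alpha}$ and removing one edge from each copy of $H'$ yields $\ex(n,H') = \Omega(n^{2-1/m_2(H')})$, where $m_2(H') = \frac{e(H')-1}{v(H')-2}$. For the exponent to exceed $3/2$ one needs a subgraph with $e \geq 2v - 2$, while minimum degree $3$ only guarantees $e \geq \tfrac{3}{2}v$. One refinement is to first pass to a subgraph maximizing the ratio $e/(v-2)$, and, if the resulting $2$-density still falls short of $2$, replace random constructions by algorithmic ones in the Bohman--Keevash style, pruning copies of $H'$ as they appear during the process. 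The key obstacle is that for certain near-$3$-regular bipartite $H$, no construction beating $cn^{3/2}$ is currently known, so the bottleneck may be producing a single such example.

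For the \emph{``if''} direction, let $H$ admit a $2$-degenerate ordering $v_1,\dots,v_h$, and let $G$ be an $n$-vertex host graph with $|E(G)| \geq C_H n^{3/2}$. The natural plan is a greedy embedding: place $v_i$ into the common neighbourhood of the (at most two) previously embedded vertices it is adjacent to, which requires that the relevant pair has codegree at least $h$ inside the current target set. Achieving this demands a preprocessing step producing a substructure of $G$ in which \emph{every} pair of vertices in a designated set has codegree $\Omega(n^{1/2})$. For specific families of $2$-degenerate $H$ such preprocessing can be achieved by the methods of this paper --- shifting embeddings, weighted homomorphism counts, and asymmetric dependent random choice --- as demonstrated for prisms, honeycombs, and grids. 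The principal obstacle is that in a typical extremal $C_4$-free graph most pairs have codegree $O(1)$, so the assumption $|E(G)| \geq C_H n^{3/2}$ must be used in full strength together with balanced supersaturation for $C_4$ to extract a universal large-codegree substructure; producing such a substructure that accommodates \emph{every} $2$-degenerate ordering, rather than one tailored to a specific $H$, appears essentially equivalent to the conjecture itself and will likely require a genuinely new idea.
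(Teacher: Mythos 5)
This statement is not a result of the paper at all: it is Erd\H{o}s's conjecture, quoted from \cite{1981ESconj}, and the paper neither proves it nor believes it --- immediately after stating it, the authors recall that it was \emph{disproved} by Janzer~\cite{2021arxivJanzer}, who constructed $3$-regular bipartite graphs $H$ of girth $6$ with $\ex(n,H)=O(n^{4/3+\varepsilon})$, and they point out that their own Theorem~\ref{thm:Main1} supplies a new family of counterexamples: for $\ell\ge 4$ the prism $C_{2\ell}^{\square}$ is $3$-regular (hence not $2$-degenerate, since it is its own subgraph of minimum degree $3$) and yet $\ex(n,C_{2\ell}^{\square})=\Theta_{\ell}(n^{3/2})$. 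Your proposal treats the statement as a ``famous open conjecture'' and sketches a strategy for both directions; this misses the actual role of the statement in the paper and, more importantly, the ``only if'' direction you attempt to attack is simply false, so no refinement of the deletion method or of Bohman--Keevash-type constructions can ever produce a lower bound beating $n^{3/2}$ for these graphs --- the obstacle you describe (``for certain near-$3$-regular bipartite $H$, no construction beating $cn^{3/2}$ is currently known'') is not a technical bottleneck but a reflection of the fact that such constructions do not exist for $H=C_{2\ell}^{\square}$, $\ell\ge 4$.

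Concretely: your deletion-method computation already shows the issue, since a minimum-degree-$3$ subgraph only guarantees $e\ge \tfrac{3}{2}v$, giving $2$-density tending to $\tfrac32$ from below and hence exponents below $\tfrac32$; the paper's Theorem~\ref{thm:Main1} shows this is not an artifact of the method. The remaining ``if'' direction (every $2$-degenerate bipartite $H$ has $\ex(n,H)=O(n^{3/2})$) is indeed still open, and your observation that a universal large-codegree preprocessing step would be essentially equivalent to that direction is reasonable, but it is not something the paper claims or needs. In short, the correct reading of this statement in context is: it is a cited conjecture whose ``only if'' direction is refuted by the paper's main theorem, not a statement admitting a proof.
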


This conjecture was recently disproved by Janzer~\cite{2021arxivJanzer}, who constructed, for each $\varepsilon>0$, a $3$-regular bipartite graph $H$ with girth 6 such that $\ex(n,H)=O(n^{4/3+\varepsilon})$. Theorem~\ref{thm:Main1} provides a new family of 3-regular \emph{girth-4} counterexamples to Conjecture~\ref{conj:r=2}.

\subsection{The honeycomb}
The hexagonal tiling in honeycomb is one of the most common geometric structures, appearing in nature in many crystals. It is also the densest way to pack circles in the plane.
As the honeycomb graph $H$ of any size contains $C_6$ as a subgraph, we have a lower bound $\ex(n,H)\ge \ex(n,C_6)=\Omega(n^{4/3})$.

Our second result is a matching upper bound $O(n^{4/3})$, showing that the hexagonal tiling appears soon after the appearance of a single hexagon. In particular, we consider the following graph $H_{k,\ell}$ (see Figure~\ref{figure:honeycomb}), which contains any (finite truncation of a) honeycomb graph as a subgraph when $k$ and $\ell$ are sufficiently large.

\begin{defn-non}
For an odd integer $k\ge 1$ and even integer $\ell \ge 2$, let $H_{k,\ell}$ be the graph with vertex set $V(H_{k,\ell})=\{ x_{i,j}: 1\le i\le k, 1\le j\le \ell \}$, where $x_{k,1}=x_{k,3} = \cdots = x_{k,\ell-1}=u$ and
$x_{1,2}=x_{1,4} = \cdots = x_{1,\ell}=v$ (but all the other vertices are distinct) and edge set
\begin{align*}
  E(H_{k,\ell})=\{x_{i,j}x_{i,j+1}: 1\le i \le k, 1\le j\le \ell-1\}&\cup \{x_{2i-1,j}x_{2i,j}: 1\le i\le k/2, 1\le j \le \ell,\  j \text  { is odd} \}\\  
  &\cup\{x_{2i,j}x_{2i+1,j}: 1\le i\le k/2, 1\le j \le \ell,\  j \text  { is even} \}.
\end{align*}
\end{defn-non}
In Figure \ref{figure:honeycomb}, the edges $x_{i,j}x_{i,j+1}$ are coloured blue, the edges $x_{2i-1,j}x_{2i,j}$ (with $j$ odd) are coloured red, and the edges $x_{2i,j}x_{2i+1,j}$ (with $j$ even) are coloured green.

\begin{figure}[h]\label{figure:honeycomb}
\centering

\begin{minipage}[t]{0.48\textwidth}
    \centering
    \begin{tikzpicture}[scale=0.45]
    \foreach \n in {0,1,2,3,4,5}
    \foreach \m in {0,1,2,3,4,5}{
    \node[shape = circle,draw = black,fill,inner sep=0pt,minimum size=1.0mm] at (3*\n+1,2*0.866*\m) {};
    \node[shape = circle,draw = black,fill,inner sep=0pt,minimum size=1.0mm] at (3*\n+0.5,2*0.866*\m-0.866) {};
    \node[shape = circle,draw = black,fill,inner sep=0pt,minimum size=1.0mm] at (3*\n-0.5,2*0.866*\m-0.866) {};
    \node[shape = circle,draw = black,fill,inner sep=0pt,minimum size=1.0mm] at (3*\n-1,2*0.866*\m) {};
    \node[shape = circle,draw = black,fill,inner sep=0pt,minimum size=1.0mm] at (3*\n-0.5,2*0.866*\m+0.866) {};
    \node[shape = circle,draw = black,fill,inner sep=0pt,minimum size=1.0mm] at (3*\n+0.5,2*0.866*\m+0.866) {};
    \draw[thick, fill=black, fill opacity=0.3] 
        (3*\n+1,2*0.866*\m) -- (3*\n+0.5,2*0.866*\m-0.866);
    \draw[thick, fill=black, fill opacity=0.3] 
        (3*\n+0.5,2*0.866*\m-0.866) -- (3*\n-0.5,2*0.866*\m-0.866);
    \draw[thick, fill=black, fill opacity=0.3] 
        (3*\n-0.5,2*0.866*\m-0.866) -- (3*\n-1,2*0.866*\m);
    \draw[thick, fill=black, fill opacity=0.3] 
        (3*\n-1,2*0.866*\m) -- (3*\n-0.5,2*0.866*\m+0.866);
    \draw[thick, fill=black, fill opacity=0.3] 
        (3*\n-0.5,2*0.866*\m+0.866) -- (3*\n+0.5,2*0.866*\m+0.866);
    \draw[thick, fill=black, fill opacity=0.3] 
        (3*\n+0.5,2*0.866*\m+0.866) -- (3*\n+1,2*0.866*\m);
    }
    \foreach \n in {0,1,2,3,4}
    \foreach \m in {0,1,2,3,4,5}{
    \draw[thick, fill=black, fill opacity=0.3] 
    (3*\n+1,2*0.866*\m)--(3*\n+2,2*0.866*\m);
    }    
    \end{tikzpicture}
    \caption{Honeycomb}
    \label{fig:Fengwo}
\end{minipage}
\begin{minipage}[t]{0.48\textwidth}
\centering
\begin{tikzpicture}[scale=0.4]
\foreach \m in {0,1,2,3,4,5}
\foreach \n in {0,0.5,1}{
    \node[shape = circle,draw = blue,fill,inner sep=0pt,minimum size=1.0mm] at (8*\n,2*\m) {};

    \node[shape = circle,draw = black,fill,inner sep=0pt,minimum size=1.0mm] at (8*\n+1,2*\m) {};
    \draw[thick, red, fill opacity=0.3] (8*\n,2*\m) -- (8*\n+1,2*\m);
    
    \node[shape = circle,draw = black,fill,inner sep=0pt,minimum size=1.0mm] at (8*\n+2,2*\m+1) {};

    \node[shape = circle,draw = black,fill,inner sep=0pt,minimum size=1.0mm] at (8*\n+3,2*\m+1) {};

    \draw[thick, green, fill opacity=0.3] (8*\n+2,2*\m+1) -- (8*\n+3,2*\m+1);

    \draw[thick, blue, fill opacity=0.3] (8*\n+1,2*\m) -- (8*\n+2,2*\m+1);
}   

\foreach \m in {0,1,2,3,4}
\foreach \n in {0,0.5,1}{
    \draw[thick, blue, fill opacity=0.3] 
    (8*\n+1,2*\m+2) -- (8*\n+2,2*\m+1);
}

\foreach \m in {0,1,2,3,4,5}
\foreach \n in {0.5,1}{
    \draw[thick, blue, fill opacity=0.3] 
    (8*\n,2*\m) -- (8*\n-1,2*\m+1);
}

\foreach \m in {0,1,2,3,4}
\foreach \n in {0.5,1}{
    \draw[thick, blue, fill opacity=0.3] (8*\n,2*\m+2) -- (8*\n-1,2*\m+1);
}
\node[shape = circle,draw = black,fill,inner sep=0pt,minimum size=1.0mm] at (-2,6) {};
\node[left] at (-2,6) {$v$};
\node[shape = circle,draw = black,fill,inner sep=0pt,minimum size=1.0mm] at (13,7) {};
\node[right] at (13,7) {$u$};

\node[below] at (-0.5,0) {$x_{1,1}$};
\node[below] at (1.5,0) {$x_{2,1}$};
\node[below] at (3.5,0) {$x_{3,1}$};
\node[below] at (5.5,0) {$x_{4,1}$};
\node[below] at (7.5,0) {$x_{5,1}$};
\node[below] at (9.5,0) {$x_{6,1}$};

\node[above] at (1.5,11) {$x_{2,12}$};
\node[above] at (3.5,11) {$x_{3,12}$};
\node[above] at (5.5,11) {$x_{4,12}$};
\node[above] at (7.5,11) {$x_{5,12}$};
\node[above] at (9.5,11) {$x_{6,12}$};
\node[above] at (11.5,11) {$x_{7,12}$};

\foreach \m in {0,1,2,3,4,5}{
\draw[thick, blue, fill opacity=0.3] (-2,6) -- ( 0,2*\m);
\draw[thick, blue, fill opacity=0.3] (13,7) --(11,2*\m+1);
}
\end{tikzpicture}

\caption{$H_{7,12}$}
\end{minipage}

\end{figure}

\begin{theorem}\label{thm:tightHoneyComb}
For any odd integer $k\ge 1$ and even integer $\ell \ge 2$,
$$\ex (n, H_{k,\ell}) = \Theta_{k,\ell}(n^{4/3}).$$
\end{theorem}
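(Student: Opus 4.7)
The lower bound is immediate: once $k \ge 3$ and $\ell \ge 4$, the graph $H_{k,\ell}$ contains a single hexagonal face as a subgraph, so $\ex(n, H_{k,\ell}) \ge \ex(n, C_6) = \Omega(n^{4/3})$ by Benson's classical construction. The small boundary cases ($k = 1$ or $\ell = 2$), in which $H_{k,\ell}$ degenerates to a short path, can be verified separately and are absorbed by the $\Theta_{k,\ell}$ constants.

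For the upper bound, let $G$ be an $n$-vertex graph with $|E(G)| \ge C n^{4/3}$, where $C = C(k,\ell)$ is sufficiently large. The plan is to embed $H_{k,\ell}$ via a layered scheme. First, by a standard regularisation I reduce to a bipartite almost-regular subgraph $G' \subseteq G$ on $\Theta(n)$ vertices with all degrees comparable to some $d = \Omega(n^{1/3})$. Next, I identify the apex vertex $v$ of $H_{k,\ell}$ with a high-degree vertex of $G'$ and pick $\ell/2$ of its neighbours to play the roles of $x_{1,1}, x_{1,3}, \ldots, x_{1,\ell-1}$. From these, I extend $\ell/2$ branches row by row in the honeycomb pattern: at each row, consecutive branches (indexed by column) must share a vertex at the appropriate position to form the hexagonal faces, and the terminal vertices in row $k$ must all converge to a common vertex $u$. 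The central technical tool is a weighted homomorphism / subgraph count, in the spirit of the methods listed in the abstract. One sums over homomorphisms $\phi: H_{k,\ell} \to G'$ a weight designed to favour balanced, injective embeddings, and applies Jensen's / Cauchy--Schwarz inequalities row by row, together with the $C_6$-supersaturation forced by $|E(G')| = \Omega(n^{4/3})$, to show that this weighted count exceeds the contribution of degenerate (non-injective) homomorphisms, producing a genuine copy of $H_{k,\ell}$.

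The main obstacle is that the naive, purely greedy embedding treats $H_{k,\ell}$ as a bipartite graph of degeneracy $2$ and maximum degree $\ell/2$, and hence delivers only the weaker bound $O(n^{3/2})$. To push the exponent down to the tight $4/3$, one must exploit the hexagonal (i.e.\ $C_6$-face) structure of $H_{k,\ell}$ directly -- analogously to how the classical proof of $\ex(n, C_6) = O(n^{4/3})$ uses the closing-up of pairs of length-$3$ paths rather than treating $C_6$ as a $2$-degenerate bipartite graph. The shifting embedding scheme and asymmetric dependent random choice highlighted in the abstract should be the machinery required to manage this ``face-by-face closing-up'' uniformly across all $\ell/2 - 1$ consecutive column-pairs and across all $k-1$ rows, in particular controlling vertex collisions between distinct columns at every intermediate row while forcing the prescribed collision at $v$ and $u$.
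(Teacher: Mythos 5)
Your lower bound is fine and matches the paper's (a single hexagonal face gives $\ex(n,H_{k,\ell})\ge\ex(n,C_6)=\Omega(n^{4/3})$, with the degenerate small cases absorbed into the constant). The upper bound, however, is not a proof but a list of intended techniques, and the one step that actually carries the theorem is missing. You correctly diagnose the difficulty --- a greedy or degeneracy-based embedding only gives $O(n^{3/2})$, so the hexagonal face structure must be exploited --- but then you assert that ``a weighted homomorphism count, Jensen/Cauchy--Schwarz row by row, $C_6$-supersaturation, shifting, and asymmetric dependent random choice'' will handle the face-by-face closing-up. None of these is instantiated: you never say what the weights are, what quantity Cauchy--Schwarz is applied to, how supersaturation of $C_6$ (a count of hexagons) is converted into hexagons sharing edges in the specific brick-wall pattern with the two apex identifications at $u$ and $v$, or how the $\Theta(k\ell)$ required vertex-coincidences per copy are forced while all other coincidences are forbidden. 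As written, the argument would stall exactly at the point you identify as the main obstacle.

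For comparison, the paper's mechanism is concrete and quite different from your ``extend $\ell/2$ branches from $v$'' picture. It defines an \emph{$\alpha$-good} collection of paths $P_{2k}$: a family in which, for every member and every internal edge position, that edge can be exchanged for at least $\alpha$ pairwise disjoint alternative edges while staying in the family. A short shifting argument (Proposition~\ref{prop:Honeycomb}) turns any non-empty $\alpha$-good collection with $\alpha\ge k\ell$ into a copy of $H_{k,\ell}$ --- this is where all collision control happens, and it needs only that $\alpha$ exceed the number of already-embedded vertices. The real work (Lemma~\ref{lem:findgoodpaths}) is producing such a collection in a $K$-almost-regular graph with $e(G)\ge Cn^{4/3}$, and it hinges on a dichotomy your plan does not anticipate: either most paths $uvw$ have $d(u,w)\le C$, in which case a counting-plus-deletion process on ordinary paths works; or many second-neighbour codegrees are large, in which case one fixes a single vertex $v$, restricts to paths whose even-indexed vertices lie in $N(v)$ with consecutive ones having codegree $>C$, and runs the deletion process on the \emph{weighted} count $\sum_P 1/\prod_i d(x_{2i-2},x_{2i})$. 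Without this case analysis (or a substitute for it), the cleaning step fails when codegrees are badly distributed, and this is precisely the regime where the exponent $4/3$ rather than $3/2$ is at stake. So the proposal has a genuine gap at the heart of the upper bound.
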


\subsection{The grid}

We will also give an improved bound for the extremal number of the grid. For a positive integer $t$, $F_{t,t}$ is the graph with vertex set $[t]\times [t]$ in which two vertices are joined by an edge if they differ in exactly one coordinate and in that coordinate they differ by one. Brada\v{c}, Janzer, Sudakov and Tomon~\cite{2022TuranGrid} determined the extremal number of $F_{t,t}$ up to a multiplicative constant which depends on $t$, showing that for any $t\ge 2$,
$$\Omega(t^{1/2}n^{3/2})\le \ex(n,F_{t,t})\le e^{O(t^{5})}n^{3/2}.$$

They have asked to determine the correct dependence on $t$. We make substantial progress on this question by giving a very short proof of the following bound, which shows that the dependence on $t$ is polynomial.

\begin{theorem} \label{thm:normal grid}
    For any positive integer $t$, if $n$ is sufficiently large in terms of $t$, then $$\ex(n,F_{t,t})\leq 5t^{3/2}n^{3/2}.$$
\end{theorem}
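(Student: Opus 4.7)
\emph{Proof plan.} Suppose $G$ is an $n$-vertex $F_{t,t}$-free graph with $e(G) > 5t^{3/2} n^{3/2}$; we will derive a contradiction by embedding $F_{t,t}$ into $G$ greedily, column by column. First, reduce to a bipartite subgraph $H$ of $G$ by taking a bipartition with at least half the edges and then iteratively deleting vertices of degree below $\delta := c t^{3/2} \sqrt{n}$ for a small constant $c$; since this removes at most $c t^{3/2} n^{3/2}$ edges, the remaining subgraph $H$ is bipartite with minimum degree at least $\delta$, preserves $F_{t,t}$-freeness, and still has $\Theta(t^{3/2} n^{3/2})$ edges.

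We next quantify codegrees in $H$. The identity $\sum_{u \neq v} |N(u) \cap N(v)| = \sum_w d(w)(d(w) - 1)$ together with convexity gives total codegree $\Omega(n \delta^2) = \Omega(t^3 n^2)$, so the average codegree is $\Omega(t^3)$. By a further cleaning step, either discarding pairs of exceptionally low codegree or applying an asymmetric dependent random choice, we may pass to a subgraph in which the relevant codegrees are robustly of order $t^3$ along the pairs encountered by our embedding procedure.

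Treating $F_{t,t}$ as $P_t \square P_t$, we construct columns $C_1, \ldots, C_t$, each a path of length $t-1$, so that the $j$-th vertex of $C_i$ is adjacent to the $j$-th vertex of $C_{i+1}$ for every $i,j$. The first column is any such path, which exists by the minimum-degree condition. Given $C_{i-1} = (v_1, \ldots, v_t)$, we build $C_i = (u_1, \ldots, u_t)$ greedily: pick $u_1 \in N(v_1)$ avoiding previously used vertices, and for $j \ge 2$ pick $u_j \in N(v_j) \cap N(u_{j-1})$ avoiding at most $t^2$ used vertices. The codegree lower bound of $\Omega(t^3)$ ensures that more than $t^2$ choices are available at each step, and so the process succeeds, producing the desired $F_{t,t}$.

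\textbf{Main obstacle.} The technical heart of the argument is the cleaning step: we need the codegree condition to hold for \emph{all} pairs encountered during the embedding, not merely on average. This is where a one-shot (asymmetric) dependent random choice is most useful, and where the specific constant $5$ in the density bound is calibrated. A short proof must streamline these manipulations in a single pass: the longer argument of Brada\v{c}, Janzer, Sudakov and Tomon presumably iterated such cleanings, producing the $e^{O(t^{5})}$ dependence, whereas selecting a robust substructure in one step avoids the iteration and yields the polynomial $t^{3/2}$ dependence.
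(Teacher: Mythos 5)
There is a genuine gap at exactly the point you flag as the ``technical heart,'' and it is not a detail that can be filled in by the tools you name. Your embedding requires that for each adaptively chosen, non-adjacent pair $(v_j,u_{j-1})$ (a \emph{diagonal} pair of the grid, one vertex from the previous column and one just placed in the current column) the codegree $|N(v_j)\cap N(u_{j-1})|$ exceeds $t^2$. The average-codegree computation $\sum_{u\neq v}|N(u)\cap N(v)|=\sum_w d(w)(d(w)-1)=\Omega(t^3n^2)$ only controls the average over all $\binom{n}{2}$ pairs; at density $\Theta(t^{3/2}n^{3/2})$ almost all pairs may still have codegree $0$, and no deletion of ``exceptionally low codegree pairs'' can produce a subgraph in which \emph{all} pairs (or even all pairs met by a greedy process that you do not control in advance) have codegree $\Omega(t^3)$. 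Dependent random choice does not rescue this either: taking $U=N(v)$ for a random $v$, the expected number of pairs in $U$ with codegree below $s$ is of order $ns$, which already swamps $\mathbb{E}|U|=\Theta(t^{3/2}n^{1/2})$ for any $s\geq 1$; the regime $n^{3/2}$ is precisely where DRC fails to give pairwise codegree guarantees. So the step ``the codegree lower bound of $\Omega(t^3)$ ensures that more than $t^2$ choices are available at each step'' is unjustified, and with it the whole column-by-column greedy construction.

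The paper's proof avoids this obstacle entirely by never demanding a codegree bound for a prescribed pair. It counts labelled paths of length $2t-2$ (there are at least $\tfrac12 nd^{2t-2}$ of them by Sidorenko's inequality for paths), then runs a deletion process that discards all paths containing a fixed ``broken path'' $F_i$ (a $P_{2t-1}$ minus one internal vertex) whenever that internal vertex has fewer than $t^2$ valid replacements. The number of deletion steps is bounded by the number of such broken paths, which the Erd\H{o}s--Simonovits inequality $\hom(P_{\ell+1},G')\hom(P_{2t-3-\ell},G')\le m^{t/(t-1)}\hom(P_{2t-1},G')^{(t-2)/(t-1)}$ shows is a small fraction of all paths; hence a non-empty $t^2$-rich collection survives. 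The grid is then built by a shifting argument (Proposition~3.4): starting from one path shaped like the top row and right column, internal vertices are replaced one at a time, each replacement guaranteed by richness of the \emph{surviving collection} rather than by a codegree bound on a particular pair. If you want to salvage your outline, you would need to replace the pairwise codegree requirement with some such ``replaceability within a curated family'' notion; as written, the proposal does not prove the theorem.
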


It would be interesting to determine the correct power of $t$ in $\ex(n,F_{t,t})$.

\subsection{Quadrangulations of cylinder and torus}
Next, we consider certain quadrangulations of the cylinder and the torus, see Figure \ref{fig:torus}.
\begin{defn-non}[Quadrangulation of a cylinder]
    For integers $k,\ell\ge 2$, let $P_{k,\ell}$ be the graph with vertex set $V(P_{k,\ell}) = \{ x_{i,j}: 1\le i \le k,1 \le j\le \ell  \}$, and edge set
\begin{align*}
  E(P_{k,\ell})=\{x_{i,j}x_{i+1,j}: 1\le i \le k-1, 1\le j\le \ell\}&\cup \{x_{i,j+1}x_{i+1,j}: 1\le i\le k-1, 1\le j \le \ell, \  i \text  { is odd}\}\\  
  &\cup\{x_{i,j}x_{i+1,j+1}: 1\le i\leq k-1, 1\le j \le \ell, \  i \text  { is even}\},
\end{align*}
where $x_{i,\ell+1}=x_{i,1}$ for all $i\in [k]$.
\end{defn-non}

Clearly, the extremal number of such a quadrangulated cylinder is at least that of the 4-cycle. Our next result infers that in fact they are of the same order of magnitude.
\begin{figure}[htbp]\label{figure:P55}
\centering
\begin{tikzpicture}[scale=0.8]

\begin{scope}[xshift=3cm,yshift=-3cm]
    \foreach \m in {1,2,3,4,5}
\foreach \n in {2,3,4}{

\ifthenelse {\n =1}{
\node[shape = circle,draw = blue,fill=blue,inner sep=0pt,minimum size=1.5mm] at (\m,\n) {};
\node[below] at (\m,\n) {$x_{\m,\n}$};
}{
\ifthenelse {\n =5}{
\node[shape = circle,draw = blue,fill=blue,inner sep=0pt,minimum size=1.5mm] at (\m,\n) {};
\node[above] at (\m,\n) {$x_{\m,4}$};
}{
{    \node[shape = circle,draw = black,fill=black,inner sep=0pt,minimum size=1.5mm] at (\m,\n) {};}
}}}

\foreach \m in {2,4,6,8,10}
\foreach \n in {1,5}{
\node[shape = circle,draw = blue,fill=blue,inner sep=0pt,minimum size=1.5mm] at ({\m/2},\n) {};
\ifthenelse {\n =5}{
\node[above] at ({\m/2},\n) {$x_{\m,4}$};
}
{\node[below] at ({\m/2},\n) {$x_{\m,4}$};}
}

\foreach \n in {1,2,3,4}{
\node[left] at ({1/2},{5-\n+1/2}) {$x_{1,\n}$};
}

\foreach \m in {1,2,3,4,5,6}
\foreach \n in {1,2,3,4}{
\ifthenelse {\m =1}{
\node[shape = circle,draw = black,fill=red,inner sep=0pt,minimum size=1.5mm] at ({\m-1/2},{\n+1/2}) {};}
{\ifthenelse {\m =6}{
\node[shape = circle,draw = black,fill=red,inner sep=0pt,minimum size=1.5mm] at ({\m-1/2},{\n+1/2}) {};}
{\node[shape = circle,draw = black,fill=black,inner sep=0pt,minimum size=1.5mm] at ({\m-1/2},{\n+1/2}) {};}}

\ifthenelse {\m <6}{
\draw[black] (\m,{\n+1})--({\m-1/2},{\n+1/2})--(\m,\n);
\draw[black] (\m,{\n+1})--({\m+1/2},{\n+1/2})--(\m,\n);}
}

\end{scope}

\begin{scope}[xshift=18cm]

\foreach \m in {1,2,3,4,5,6}
\foreach \n in {0,120,240}{
\node[shape = circle,draw = black,fill=black,inner sep=0pt,minimum size=1.5mm] at ({\m*sin(\n+\m*60)/2 },{\m*cos(\n+\m*60)/2}) {};
}

\node[above] at ({3*sin(0+6*60) },{3*cos(0+6*60)}) { $x_{1,1}$};
\node[left] at ({2.5*sin(0+5*60) },{2.5*cos(0+5*60)}) { $x_{2,3}$};
\node[right] at ({3*sin(120+6*60) },{3*cos(120+6*60)}) { $x_{1,2}$};
\node[right] at ({2.5*sin(120+5*60) },{2.5*cos(120+5*60)}) { $x_{2,1}$};
\node[left] at ({3*sin(240+6*60) },{3*cos(240+6*60)}) { $x_{1,3}$};
\node[below] at ({2.5*sin(240+5*60) },{2.5*cos(240+5*60)}) { $x_{2,2}$};

\foreach \n in {0,120,240}{
\draw[black] ({2.5*sin(\n-60)},{2.5*cos(\n-60)})--({3*sin(\n)},{3*cos(\n)})--({2.5*sin(\n+60)},{2.5*cos(\n+60)});
}
\foreach \n in {0,120,240}{
\draw[blue] ({2*sin(\n)},{2*cos(\n)})--({2.5*sin(\n+60)},{2.5*cos(\n+60)})--({2*sin(\n+120)},{2*cos(\n+120)});
}
\foreach \n in {0,120,240}{
\draw[red] ({1.5*sin(\n-60)},{1.5*cos(\n-60)})--({2*sin(\n)},{2*cos(\n)})--({1.5*sin(\n+60)},{1.5*cos(\n+60)});
}
\foreach \n in {0,120,240}{
\draw[green] ({1*sin(\n)},{1*cos(\n)})--({1.5*sin(\n+60)},{1.5*cos(\n+60)})--({1*sin(\n+120)},{1*cos(\n+120)});
}
\foreach \n in {0,120,240}{
\draw[purple] ({0.5*sin(\n-60)},{0.5*cos(\n-60)})--({1*sin(\n)},{1*cos(\n)})--({0.5*sin(\n+60)},{0.5*cos(\n+60)});
}

\end{scope}
\end{tikzpicture}
\caption{In the first graph, identifying the blue vertices (in the same column) yields a copy of $P_{11,4}$; if additionally the red vertices (in the same row) are identified, then we obtain a copy of $T_{10,4}$. The second graph is a different way of drawing $P_{6,3}$.}
\label{fig:torus}
\end{figure}
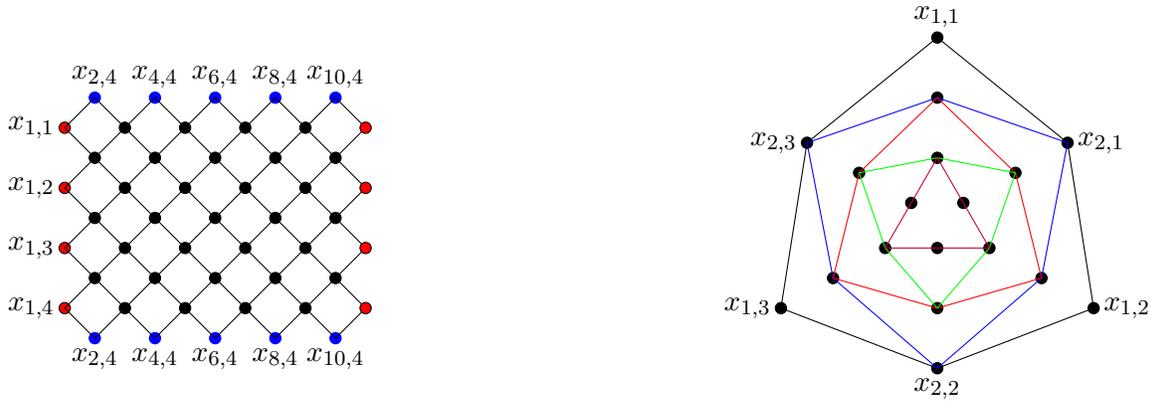

\begin{theorem}\label{thm:grid}
Let $k$ and $\ell$ be positive integers. Then we have
$$\ex(n,P_{k,\ell}) = \Theta_{k,\ell}(n^{3/2}).$$
\end{theorem}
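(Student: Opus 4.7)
Proof plan.

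The lower bound is immediate from the fact that $P_{k,\ell}$ contains a $4$-cycle for $k\ge 3$: the four vertices $x_{2,1}, x_{1,2}, x_{2,2}, x_{3,2}$ span such a cycle (the edges $x_{2,1}x_{1,2}$ and $x_{2,1}x_{3,2}$ come from the odd/even diagonal rules, while $x_{1,2}x_{2,2}$ and $x_{2,2}x_{3,2}$ are matching edges). Hence $\ex(n,P_{k,\ell})\ge \ex(n,C_4)=\Omega(n^{3/2})$.

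For the upper bound, I plan to use an \emph{asymmetric dependent random choice}, one of the tools highlighted in the abstract. The key observation is that $P_{k,\ell}$ is bipartite, with the two parts being the union of odd-indexed columns and the union of even-indexed columns. I therefore look for two sets $U_{\mathrm{odd}},U_{\mathrm{even}}\subseteq V(G)$, each of size at least a constant $L=L(k,\ell)$, such that every pair of vertices in $U_{\mathrm{odd}}$ has at least $T=T(k,\ell)$ common neighbors in $U_{\mathrm{even}}$, and symmetrically, every pair in $U_{\mathrm{even}}$ has at least $T$ common neighbors in $U_{\mathrm{odd}}$, where $L,T$ are chosen to exceed $k\ell$ by a wide margin.

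Given such a pair $(U_{\mathrm{odd}},U_{\mathrm{even}})$, the embedding of $P_{k,\ell}$ into $G$ is a short greedy argument. First pick $A_1\subseteq U_{\mathrm{odd}}$ as any $\ell$ vertices in cyclic order. Then, for each $i=2,\ldots,k$, pick $A_i$ as $\ell$ distinct vertices inside $U_{\mathrm{even}}$ (respectively $U_{\mathrm{odd}}$) for even (resp.\ odd) $i$, choosing one per consecutive pair of $A_{i-1}$ according to the parity rule defining $P_{k,\ell}$, and avoiding previously used vertices. Each step succeeds because every pair in $A_{i-1}$ has at least $T\gg k\ell$ common neighbors in the opposite-parity set, of which at most $k\ell$ have been used.

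The hardest part will be producing the pair $(U_{\mathrm{odd}},U_{\mathrm{even}})$ with the required \emph{cross-codegree} property. The plan is to assume $e(G)\ge C_{k,\ell}n^{3/2}$ for a large constant $C_{k,\ell}$, clean up to minimum degree $\ge cn^{1/2}$, and then run a two-sided DRC: sample random tuples $(v_1,v_2)$ and $(w_1,w_2)$ and take $U_{\mathrm{odd}}=N(v_1)\cap N(v_2)$ and $U_{\mathrm{even}}=N(w_1)\cap N(w_2)$, so that any pair in $U_{\mathrm{odd}}$ (resp.\ $U_{\mathrm{even}}$) automatically has $\{v_1,v_2\}$ (resp.\ $\{w_1,w_2\}$) among its common neighbors. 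The technical work is to ensure that the common neighborhoods of pairs in $U_{\mathrm{odd}}$ lie predominantly in $U_{\mathrm{even}}$ and vice versa; this requires a weighted-count analysis—using the weighted homomorphism and subgraph counts from the abstract—to set up the random choice so that the cross-codegree condition holds with positive probability. Once the pair is obtained, the column-by-column embedding above finishes the proof.
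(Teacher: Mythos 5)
Your lower bound and your greedy row-by-row embedding (given the two sets) are both fine: each row of $P_{k,\ell}$ is an independent set, every vertex of row $i\ge 2$ has exactly two neighbours in row $i-1$, so a pair $(U_{\mathrm{odd}},U_{\mathrm{even}})$ with the mutual cross-codegree property and $T\gg k\ell$ would indeed let you embed $P_{k,\ell}$ greedily. The genuine gap is the existence of that pair, and the specific mechanism you propose cannot produce it at density $\Theta(n^{3/2})$. After regularisation the degrees are $\Theta(Cn^{1/2})$, so $U_{\mathrm{odd}}=N(v_1)\cap N(v_2)$ and $U_{\mathrm{even}}=N(w_1)\cap N(w_2)$ both have expected size $d^2/n=\Theta(C^2)$, a constant; likewise $|N(u)\cap N(u')|=\Theta(C^2)$ for a typical pair $u,u'\in U_{\mathrm{odd}}$. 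The probability that any fixed vertex lies in $N(w_1)\cap N(w_2)$ for random $w_1,w_2$ is $O(C^2/n)$, so the expected number of common neighbours of $u,u'$ landing inside $U_{\mathrm{even}}$ is $O(C^4/n)\to 0$: with high probability pairs in $U_{\mathrm{odd}}$ have \emph{no} common neighbour in $U_{\mathrm{even}}$. The obstruction is the constant size of the target set, not the sampling distribution, so no reweighting repairs it. Indeed, this two-sided codegree condition is essentially the difficulty that makes the grid problem of Brada\v{c}, Janzer, Sudakov and Tomon hard; it is not obtainable by a short dependent random choice argument.

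The paper sidesteps this entirely. It uses supersaturation (Lemma~\ref{Supersaturation of cycles}) to get $\Omega_\ell(d^{2\ell})$ copies of $C_{2\ell}$, then runs a deletion process (Lemma~\ref{lemma:alphaNice}) to extract a non-empty $\alpha$-rich family of $2\ell$-cycles: every vertex of every cycle in the family can be swapped for at least $\alpha\ge k\ell$ other vertices while keeping the rest of that cycle fixed and staying inside the family. Crucially, the $\alpha$ replacement vertices are counted in the whole graph, not inside a small prescribed set, which is why the count survives at density $n^{3/2}$. The cylinder is then assembled by repeatedly gluing $2\ell$-cycles that differ in a single vertex (Proposition~\ref{prop:45Grid}); this shifting step plays the role of your row-by-row embedding. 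If you want to rescue your outline, replace the two-sided DRC by this ``rich family plus shifting'' scheme.
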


If $k$ is even and we glue the two sides of the cylinder $P_{k+1,\ell}$, then we obtain a torus.
\begin{defn-non}[Quadrangulation of a torus]
    For an even integer $k\geq 4$ and integer $\ell\geq 2$, let $T_{k,\ell}$ be the graph with vertex set $V(T_{k,\ell}) = \{ x_{i,j}: 1\le i \le k,1 \le j\le \ell  \}$, and edge set
\begin{align*}
  E(T_{k,\ell})=\{x_{i,j}x_{i+1,j}: 1\le i \le k, 1\le j\le \ell\}&\cup \{x_{i,j+1}x_{i+1,j}: 1\le i\le k, 1\le j \le \ell, \  i \text  { is odd}\}\\  
  &\cup\{x_{i,j}x_{i+1,j+1}: 1\le i\leq k, 1\le j \le \ell, \  i \text  { is even}\},
\end{align*}
    where $x_{k+1,j} = x_{1,j}$ for all $j\in [\ell]$ and $x_{i,1} = x_{i,\ell +1}$ for all $i\in [k]$.
\end{defn-non}

For the quadrangulated torus, we provide a general upper bound as follows.
\begin{theorem}\label{thm:torus}
For an even integer $k\geq 4$ and an integer $\ell \ge 2$, we have
\begin{equation*}
    \ex(n,T_{k,\ell}) = O_{k,\ell}(n^{\frac{3}{2}+\frac{\ell}{k}}(\log n)^{2}).
\end{equation*}
\end{theorem}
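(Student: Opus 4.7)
The plan is to build a copy of $T_{k,\ell}$ in $G$ by prescribing its $k$ rows one-by-one as ordered $\ell$-tuples in $V(G)$ and then closing the torus by a pigeonhole argument on the wraparound, in a spirit similar to \Cref{thm:grid} but with the added circular identification. First I would pass to a bipartite nearly-$d$-regular subgraph $G'\subseteq G$ with $d\gtrsim n^{1/2+\ell/k}\log n$ via the standard Erd\H{o}s regularization, which accounts for one of the two $\log n$ factors in the bound. Viewing $V(T_{k,\ell})$ as a cyclic sequence of $k$ ordered $\ell$-tuples, a copy of $T_{k,\ell}$ in $G'$ is the same as a cyclic sequence $R_1,\dots,R_k,R_1$ of $\ell$-tuples in which each adjacent pair (including $(R_k,R_1)$) induces the prescribed $2\ell$-cycle in $G'$.

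I would introduce transfer matrices $M$ and $N$ on the space of $\ell$-tuples whose entries record the number of valid $C_{2\ell}$-embeddings between two consecutive rows, distinguishing the two diagonal orientations according to whether the row-index is odd or even. Then the number of ordered open cylinders $P_{k+1,\ell}$ in $G'$ equals $\mathbf{1}^{T}(MN)^{k/2}\mathbf{1}$, while the number of $T_{k,\ell}$'s equals $\mathrm{tr}\bigl((MN)^{k/2}\bigr)$. An iterated convexity / Cauchy--Schwarz computation in the row-graph (using Sidorenko-type estimates for $C_{2\ell}$, which are known for even cycles) yields a walk-count lower bound of the form $\mathbf{1}^{T}(MN)^{k/2}\mathbf{1}\gtrsim d^{2k\ell}/n^{(k-1)\ell}$. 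The crucial step is to convert this walk count into a trace bound. A naive averaging over the $n^{2\ell}$ boundary pairs $(R_1,R_{k+1})$ loses a factor $n^{\ell}$ when extracting the diagonal $R_1=R_{k+1}$ and only yields the Sidorenko-heuristic threshold $d\ge n^{1/2}$, which is too weak. Instead, I would apply the asymmetric dependent random choice flagged in the abstract to isolate a rich family of $\ell$-tuples admitting many common extensions in the row-graph, and restrict attention to row sequences whose boundaries lie in this rich family. This concentrates the pigeonhole on a much smaller universe of boundary rows, saving a factor $n^{\ell/k}$ per step of mixing; balancing the parameters yields a $T_{k,\ell}$-copy whenever $d\gtrsim n^{1/2+\ell/k}(\log n)^2$, with the second $\log n$ entering through the DRC threshold.

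The main difficulty will be precisely this walk-to-trace step: since $MN$ is neither symmetric nor stochastic in a usable way, there is no direct spectral-gap argument available, and the combinatorial substitute via asymmetric DRC must be carefully tuned so that the richness threshold on rows, the size of the restricted row universe, and the diagonal pigeonhole exactly balance to give the claimed $n^{\ell/k}$ loss in the density, no more. Everything else in the proof---the reduction to a regular bipartite host, the iterated weighted count of $C_{2\ell}$'s across rows, and the final existence assertion---should then follow from routine manipulations.
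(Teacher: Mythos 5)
Your setup is on the right track: the auxiliary structure you describe (rows as ordered $\ell$-tuples, with an adjacency recording a valid $C_{2\ell}$ between consecutive rows) is essentially the graph $H$ on $A^{\ell}\cup B^{\ell}$ that the paper constructs in Proposition~\ref{prop:torus}, and you have correctly located the difficulty in closing the cycle of rows. However, the proposal has a genuine gap exactly at the step you flag as "crucial". You assert that an "asymmetric dependent random choice" argument will restrict to a rich family of boundary rows and save a factor $n^{\ell/k}$ per mixing step, but you give no mechanism for how DRC produces a \emph{closed} sequence $R_1,\dots,R_k,R_1$: DRC-type arguments produce sets with large common neighbourhoods, not fixed points of a $k$-fold composition, and the claim that "balancing the parameters yields a $T_{k,\ell}$-copy whenever $d\gtrsim n^{1/2+\ell/k}(\log n)^2$" is not derived from anything. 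As written, the walk-to-trace conversion is a restatement of the problem, not a proof of it.

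There is a second, independent gap: even granting a positive trace $\mathrm{tr}\bigl((MN)^{k/2}\bigr)>0$, this only yields a \emph{homomorphic} image of $T_{k,\ell}$ --- vertices in different rows (or in non-adjacent positions of the cyclic structure) may coincide, and your transfer-matrix counts do nothing to rule this out. The paper resolves both issues simultaneously and quite differently: it first extracts (Lemma~\ref{lemma:alphaNice}, via supersaturation of even cycles plus a deletion process) a non-empty $\alpha$-rich collection of $2\ell$-cycles with $\alpha=\Theta(C^2)$ when $e(G)\ge Cn^{3/2}$; the richness guarantees that in the auxiliary graph $H$ each vertex has only a $\beta=\ell^2/\alpha$ fraction of "conflicting" neighbours, and then Lemma~\ref{lem:UpperBoundHomC2k} (Janzer's homomorphic-cycle lemma) is applied to $H$ to produce a $k$-cycle all of whose $\ell$-tuples are pairwise coordinate-disjoint, i.e.\ a genuine copy of $T_{k,\ell}$. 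The exponent loss $n^{2\ell/k}$ and the $(\log n)$ factors come from the hypothesis $\beta<(2^{20}(k/2)^3(\log|V(H)|)^4|V(H)|^{2/k})^{-1}$ of that lemma with $|V(H)|\le n^{\ell}$, not from a DRC computation. If you want to salvage your route, you would need to replace the DRC black box with something playing the role of Lemma~\ref{lem:UpperBoundHomC2k}, and you would additionally need to justify the Sidorenko-type lower bound for the quadrangulated cylinder, which you also assert without proof.
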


Thus, when $k$ is sufficiently large compared to $\ell$, the exponent can be arbitrarily close to $3/2$. On the other hand, the exponent is always strictly greater than $3/2$ as the probabilistic deletion method (see, e.g, Theorem 2.26 in \cite{2013Survey}) yields the lower bound $\ex(n,T_{k,\ell})=\Omega_{k,\ell}(n^{\frac{3}{2}+\frac{3}{4k\ell-2}})$.

\medskip
\noindent\textbf{Structure of the paper.}
The rest of this paper is organized as follows. We list some useful lemmas in Section~\ref{sec:Preliminary}. The proofs of the main results are given in Sections~\ref{sec:Grids},~\ref{sec:honeycomb} and~\ref{sec:prisms} respectively. Finally we discuss some related problems in Section~\ref{sec:conclude}.

\section{Preliminaries}\label{sec:Preliminary}
{\bf \noindent Notations.}
For a graph $G$, a subset of vertices $T\subseteq V(G)$, let $G-T$ be the graph obtained from $G$ by deleting the vertices in $T$ and all edges incident to any vertices in $T$. We write $G-v$ instead of $G-\{v\}$. For an edge $e\in E(G)$, let $G-e$ be the spanning subgraph obtained from $G$ by deleting the edge $e$. We write $v(G)$ and $e(G)$ for the number of vertices and edges of $G$, respectively. For a subset $T\subseteq V(G)$, we use $N_{G}(T)$ (or $N(T)$ if the subscript is clear) to denote the set of common neighbors of $T$, that is, $N_{G}(T):=\{v\in V(G):uv\in E(G)\ \text{for every vertex\ } u\in T\}$. We use $d_{G}(x)$ (or $d(x)$) to denote the degree of $x$ and $d_{G}(x,y)$ or ($d(x,y)$) to denote the codegree of $x$ and $y$. We write $\Delta(G)$ and $\delta(G)$ for the maximum and minimum degree of $G$, respectively. The notation $P_{k}$ refers to the path with $k$ vertices, i.e. the path of length $k-1$. Moreover, for a path of the form $x_{1}x_{2}\cdots x_{k}$, the \emph{endpoints} are $x_{1}$ and $x_{k}$. We will use $x_{1}x_{2}\cdots x_{2\ell}x_{1}$ to denote a $2\ell$-cycle. Sometimes we also use $(a,b,c,d)$ to denote the $4$-cycle $abcda$. The notation $O(\cdot)$ and $\Omega(\cdot)$ have their usual asymptotic meanings. We use $\log$ throughout to denote the base $2$ logarithm.

\smallskip

The following lemma is folklore.

\begin{lemma}\label{lem:SubgraphLargeDeg}
   If $G$ is a graph with average degree $d$ then it contains a subgraph $G_{1}$ with $e(G_{1})\ge\frac{e(G)}{2}$ and $\delta(G_{1})\ge\frac{d}{4}$.
\end{lemma}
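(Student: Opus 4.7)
The plan is to carry out the classical "repeatedly delete low-degree vertices" argument. Set $n=v(G)$ so that $e(G)=dn/2$, and consider the following greedy process: starting from $G_0:=G$, at step $i\ge 0$, if the current graph $G_i$ contains a vertex $v$ with $d_{G_i}(v)<d/4$, delete $v$ to obtain $G_{i+1}$; otherwise stop and output $G_1:=G_i$.

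The key observation is a simple edge-count. Each deletion removes strictly fewer than $d/4$ edges, and the process can perform at most $n$ deletions in total. Hence the total number of edges removed is strictly less than $n\cdot d/4 = e(G)/2$. In particular, the process must terminate while some vertices remain (otherwise we would have removed all $e(G)$ edges, contradicting the bound), and at termination we have $e(G_1)>e(G)-e(G)/2 = e(G)/2$, which gives the first required property. The second property, $\delta(G_1)\ge d/4$, holds by definition of the stopping rule.

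There is essentially no obstacle here: the only thing to be slightly careful about is that "average degree $d$" refers to the initial graph $G$, not to $G_1$, so the threshold $d/4$ is fixed throughout the process rather than updated. With that convention the calculation above goes through verbatim, and the lemma follows.
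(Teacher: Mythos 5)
Your proof is correct: this is the standard vertex-deletion argument, and the edge count $n\cdot d/4=e(G)/2$ indeed shows the process halts on a non-empty subgraph with the required properties. The paper states this lemma as folklore and gives no proof, so there is nothing to compare against; your argument is exactly the expected one.
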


We say that a graph $F$ is \emph{$K$-almost regular} if $\Delta(F)\le K\delta(F)$. The regularization lemma of Erd\H{o}s and Simonovits \cite{1970Erdos} is widely used in bipartite Tur\'{a}n problems. We will use the following known variant.

\begin{lemma}[\cite{2012SIAMJiang}]\label{lem:JiangSeiverK-almost}
    Let $\varepsilon,c$ be positive reals, where $\varepsilon<1$ and $c\geq 1$. Let $n$ be a positive integer that is sufficient large as a function of $\varepsilon$. Let $G$ be a graph on $n$ vertices with $e(G)\ge cn^{1+\varepsilon}.$ Then $G$ contains a $K$-almost regular subgraph $G'$ on $m\ge n^{\frac{\varepsilon-\varepsilon^2}{2+2\varepsilon}}$ vertices such that $e(G')\ge \frac{2c}{5}m^{1+\varepsilon}$ and $K= 20\cdot 2^{\frac{1}{\varepsilon^2}+1}$. 
\end{lemma}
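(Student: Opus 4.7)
The plan is to adapt the Erd\H{o}s--Simonovits regularization argument~\cite{1970Erdos} to achieve a max-to-min degree ratio depending only on $\varepsilon$, not on $n$. The proof proceeds iteratively and relies on a careful dyadic analysis of the degree distribution.

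First, apply Lemma~\ref{lem:SubgraphLargeDeg} to extract $G_0 \subseteq G$ with $\delta(G_0) \geq cn^{\varepsilon}/2$ (a quarter of the average degree of $G$) and $e(G_0) \geq e(G)/2 \geq (c/2)\, n^{1+\varepsilon}$. If $G_0$ is already $K$-almost-regular we are essentially done, modulo a final cleanup to fix the leading constant. Otherwise, partition $V(G_0)$ into dyadic degree classes $V_j = \{v : d_{G_0}(v) \in [2^j, 2^{j+1})\}$ and look for a \emph{dense window} of $w = \lceil 1/\varepsilon^2\rceil + O(1)$ consecutive classes. The key observation is that the subgraph induced on the union of such a window has max-to-min degree ratio at most $2^w \le K$; and by choosing the window carefully, via an averaging argument over disjoint or appropriately shifted windows, one preserves a fraction of the edges that depends only on $\varepsilon$.

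The execution repeats this process: restrict to the chosen window, then re-apply Lemma~\ref{lem:SubgraphLargeDeg} to boost the minimum degree, yielding a nested sequence $G_0 \supseteq G_1 \supseteq \cdots \supseteq G_t = G'$. Each $G_i$ maintains $e(G_i) \geq c_i m_i^{1+\varepsilon}$ with $c_i$ decreasing by only a controlled factor per round and with $m_i := |V(G_i)|$ shrinking by at most a polynomial factor. Termination after $O(1/\varepsilon^2)$ rounds delivers a $K$-almost-regular subgraph $G'$. The vertex-count lower bound $m \geq n^{(\varepsilon - \varepsilon^2)/(2 + 2\varepsilon)}$ then follows by combining the trivial constraint $e(G') \leq \binom{m}{2}$ with $e(G') \geq (2c/5)\, m^{1+\varepsilon}$ and tracking $m_i$ through the iterations against the starting edge count of $G$.

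The main obstacle is the tight quantitative bookkeeping required to deliver $K = 20 \cdot 2^{1/\varepsilon^2+1}$ with exactly the right constants. The window width must be calibrated precisely; the loss factor arising from the averaging step must absorb cleanly into the $2/5$ constant across all $O(1/\varepsilon^2)$ rounds; and any $\log n$ factors that naively appear in the averaging must be eliminated by using disjoint windows rather than overlapping ones, together with the hypothesis that $n$ is large in terms of $\varepsilon$. These quantitative details constitute the technical heart of the argument and are executed in full in~\cite{2012SIAMJiang}.
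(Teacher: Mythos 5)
The paper does not prove this lemma at all: it is quoted verbatim from the cited reference \cite{2012SIAMJiang} and used as a black box, so there is no in-paper proof to compare your attempt against. Judged on its own terms, your proposal is an outline that explicitly defers ``the technical heart of the argument'' to the same reference, so it is not a self-contained proof; more importantly, the outline as stated has a genuine gap in its central step.

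The gap is in the dyadic-window selection. With $V_j=\{v: d_{G_0}(v)\in[2^j,2^{j+1})\}$ there are about $\log n$ nonempty classes, hence about $\varepsilon^2\log n$ disjoint windows of width $w\approx 1/\varepsilon^2$. If the edges of $G_0$ are spread essentially uniformly across degree scales, the best window captures only an $O(1/(\varepsilon^2\log n))$ fraction of the edges, and this loss is not repaired by taking the windows disjoint rather than overlapping, nor by assuming $n$ large; iterating such a step cannot deliver the absolute constant $2/5$ in $e(G')\ge \frac{2c}{5}m^{1+\varepsilon}$. The mechanism that actually removes the $\log n$ loss in the Erd\H{o}s--Simonovits/Jiang--Seiver argument is a density-increment (extremal-subgraph) step: one passes to a subgraph $H$ (nearly) maximizing $e(H)/v(H)^{1+\varepsilon}$, for which the minimum degree is automatically large and the maximum degree is controlled by comparing $H$ with the subgraph induced on its top half of vertices by degree; if no such almost-regular subgraph is found, one recurses on a much smaller vertex set whose density ratio has \emph{increased} by a constant factor, and the a priori bound $e(H)\le\binom{v(H)}{2}$ caps the number of increments. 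Your bookkeeping has this backwards --- you describe $c_i$ as ``decreasing by only a controlled factor per round,'' whereas the termination of the iteration and the bound $m\ge n^{(\varepsilon-\varepsilon^2)/(2+2\varepsilon)}$ both hinge on the density constant increasing along the recursion. As written, the proposal neither closes this gap nor supplies the calibration needed for $K=20\cdot 2^{1/\varepsilon^2+1}$, so it should be regarded as an (imperfect) sketch of the cited proof rather than a proof.
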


In the next result, we write $\hom(H,G)$ for the number of graph homomorphisms from $H$ to $G$. The following lemma is due to Erd\H os and Simonovits.

\begin{lemma}[\cite{ES82}] \label{lem:path inequality}
    Let $k>\ell$ be positive integers such that $k$ is even. Then for any $n$-vertex graph $G$, we have
    $(\frac{\hom(P_{k+1},G)}{n})^{1/k}\geq (\frac{\hom(P_{\ell+1},G)}{n})^{1/\ell}$.
\end{lemma}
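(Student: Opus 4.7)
My plan is to re-express $\hom(P_{m+1},G)/n$ as the $m$-th moment of an explicit probability measure on the real line, after which the lemma reduces to a routine instance of the monotonicity of $L^p$-norms on a probability space.

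\textbf{Spectral setup.} First I would diagonalize the adjacency matrix $A$ of $G$. Since $A$ is real symmetric it has real eigenvalues $\lambda_1,\dots,\lambda_n$ with an orthonormal eigenbasis $v_1,\dots,v_n$. Writing the all-ones vector as $\mathbf 1 = \sum_i c_i v_i$, one has $\sum_i c_i^2 = \|\mathbf 1\|^2 = n$ and
\[
  \hom(P_{m+1},G) \;=\; \mathbf 1^{\top} A^m \mathbf 1 \;=\; \sum_{i=1}^n c_i^2\, \lambda_i^m
\]
for each integer $m\ge 0$. I would then introduce the probability measure $\mu$ on $\mathbb R$ defined by $\mu(\{\lambda_i\})=c_i^2/n$ and a real random variable $X\sim\mu$, which turns the above identity into $\hom(P_{m+1},G)/n = \mathbb E[X^m]$.

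\textbf{Applying power-mean monotonicity.} The key observation is that because $k$ is even, $X^k=|X|^k$ pointwise, and hence
\[
  \Big(\tfrac{\hom(P_{k+1},G)}{n}\Big)^{1/k} \;=\; \bigl(\mathbb E|X|^k\bigr)^{1/k} \;=\; \|X\|_{L^k(\mu)}.
\]
For the exponent $\ell$ (which may be odd), I will use that $\hom(P_{\ell+1},G)\ge 0$ since it counts homomorphisms, combined with the triangle inequality for integrals:
\[
  0 \;\le\; \frac{\hom(P_{\ell+1},G)}{n} \;=\; \mathbb E[X^\ell] \;\le\; \mathbb E|X|^\ell,
\]
so taking $\ell$-th roots yields $(\hom(P_{\ell+1},G)/n)^{1/\ell} \le \|X\|_{L^\ell(\mu)}$. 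To finish, I would invoke Jensen's inequality applied to the convex map $y\mapsto y^{k/\ell}$ with $y=|X|^\ell$, which gives the standard monotonicity $\|X\|_{L^\ell(\mu)} \le \|X\|_{L^k(\mu)}$ on the probability space $(\mathbb R,\mu)$. Chaining the three displays produces the claimed inequality.

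\textbf{Main obstacle.} There is no genuine difficulty once the spectral perspective is adopted; the only delicate point is the case of odd $\ell$, where $X^\ell$ can take negative values. This is precisely where both hypotheses pay off: the evenness of $k$ lets me identify the $k$-th moment with an honest $L^k$-norm, and the non-negativity of the homomorphism count lets me bound $\mathbb E[X^\ell]$ by $\mathbb E|X|^\ell$ and then take an $\ell$-th root safely. Without either hypothesis the two sides of the desired inequality would not even be manifestly real and non-negative.
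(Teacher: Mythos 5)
Your proof is correct. The paper does not prove this lemma at all --- it is quoted from Erd\H{o}s and Simonovits \cite{ES82} --- so there is no in-text argument to compare against; the original proof proceeds by combinatorial walk-counting, whereas your spectral reformulation $\hom(P_{m+1},G)=\mathbf 1^{\top}A^m\mathbf 1=\sum_i c_i^2\lambda_i^m$ turns the statement into monotonicity of $L^p$-norms for the measure $\sum_i (c_i^2/n)\,\delta_{\lambda_i}$, which is a clean and standard route. All the steps check out: the identification of $\hom(P_{m+1},G)$ with the number of walks of length $m$, the normalization $\sum_i c_i^2=n$, the use of evenness of $k$ to write $\mathbb E[X^k]=\mathbb E|X|^k$, the bound $0\le\mathbb E[X^{\ell}]\le\mathbb E|X|^{\ell}$ justified by non-negativity of the homomorphism count (which is exactly what lets you take the $\ell$-th root when $\ell$ is odd), and Jensen for $y\mapsto y^{k/\ell}$. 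The only cosmetic point is that writing $\mu(\{\lambda_i\})=c_i^2/n$ is ambiguous when eigenvalues repeat; defining $\mu=\sum_i (c_i^2/n)\,\delta_{\lambda_i}$ removes the ambiguity without changing any moment computation.
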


A homomorphic even cycle $C_{2\ell}$ in a graph $G$ is a $2\ell$-tuple $(x_{1},x_{2},\ldots,x_{2\ell})\in V(G)^{2\ell}$ such that $x_{1}x_{2}, x_{2}x_{3},\ldots,x_{2\ell}x_{1}\in E(G)$. The following lemma provides a sufficient condition for the existence of homomorphic even cycles $C_{2\ell}$ without ``conflicting" vertices.

\begin{lemma}[\cite{2021arxivJanzer}]\label{lem:UpperBoundHomC2k}
  Let $k\geqslant 2$ be an integer and $G=(V,E)$ be an $n$-vertex non-empty graph. Let $\sim$ be a symmetric binary relation defined over $V$ such that for every $u\in V$ and $v\in V$, $v$ has at most $\beta d(v)$ neighbors $w\in V$ which satisfy $u\sim w$. If $\beta<(2^{20}k^{3}(\log{n})^{4}n^{\frac{1}{k}})^{-1}$, then there exists a homomorphic $2k$-cycle $(x_{1},x_{2},\ldots,x_{2k})$ in $G$ such that for all $i\neq j$, we have $x_{i}\nsim x_{j}$.
\end{lemma}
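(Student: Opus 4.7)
My proof plan follows the classical ``many total, few bad'' template: count all homomorphic $2k$-cycles in $G$ and show that the ``bad'' ones (those with $x_i\sim x_j$ for some $i\neq j$) form a strictly smaller collection. Write $N_s(u,v)$ for the number of walks of length $s$ in $G$ from $u$ to $v$; then the total number of homomorphic $C_{2k}$s equals $N_{\mathrm{tot}}=\mathrm{tr}(A^{2k})$. Splitting a bad cycle at a conflicting pair $(x_i,x_j)$ at cyclic distance $r$ into two walks of lengths $r$ and $2k-r$ gives
\begin{equation*}
  \#\bigl\{\text{hom }C_{2k}\text{ with }x_i\sim x_j,\,|i-j|\equiv r\pmod{2k}\bigr\}\;=\;\sum_{u,v:\,u\sim v}N_r(u,v)\,N_{2k-r}(u,v).
\end{equation*}
Since the cycle has at most $\binom{2k}{2}\leq 2k^2$ position-pairs, it suffices to show each such term is strictly less than $N_{\mathrm{tot}}/(4k^2)$.

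The key quantitative input is the ``edge-sparsity'' of $\sim$: for any fixed $u$ and any walk $u=w_0,\ldots,w_{s-1}$, the number of extensions $w_s\in N(w_{s-1})$ with $u\sim w_s$ is at most $\beta d(w_{s-1})$ by hypothesis. Iterating gives $\sum_{v\in\sim_u}N_s(u,v)\leq\beta N_s(u)$, where $N_s(u)=\sum_v N_s(u,v)$ is the total number of walks of length $s$ from $u$. A similar peeling at the conflicting vertex (using the $\sim$-constraint with $v$ equal to the neighbor of $w_r$ inside the walk) bounds the weighted sum above by $\beta W$ for an appropriate walk count $W$. To make these expressions tractable I would first apply Lemma~\ref{lem:JiangSeiverK-almost} to pass to a $K$-almost regular subgraph with average degree $d$, so that $N_s(u)$ is controlled by $d^s$ up to factors of $K^{O(k)}$ uniformly in $u$.

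The main obstacle is obtaining the sharp $n^{1/k}$ factor in the final ratio, rather than the na\"ive $\sqrt n$ that a direct Cauchy-Schwarz and spectral bound ($\sum_u N_s(u)^2\leq n\cdot\mathrm{tr}(A^{2s})$) would yield. The $n^{1/k}$ exponent is exactly the Bondy-Simonovits threshold for $C_{2k}$, and to extract it one must compare $\mathrm{tr}(A^{2k})$ with walk counts of nearby length via Lemma~\ref{lem:path inequality} (or the power mean inequality for $\mathrm{tr}(A^{2s})^{1/s}$), so that at the worst case $d\asymp n^{1/k}$ the two estimates balance. This comparison should be executed dyadically, bucketing the pairs $(u,v)$ by the magnitudes of $N_r(u,v)$ and $N_{2k-r}(u,v)$ (and possibly also by the degree of the intermediate vertex); the $(\log n)^4$ factor in the hypothesis on $\beta$ absorbs the losses from four such nested dyadic decompositions, while the $k^3$ factor collects $\binom{2k}{2}$ pair choices together with a linear loss from the peeling. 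The outcome is the clean bound $F_r\leq \beta\cdot k\cdot n^{1/k}(\log n)^{4}$ for each cyclic distance $r$, which upon summation strictly beats $1$ by the hypothesis on $\beta$.
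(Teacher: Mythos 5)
This lemma is not proved in the paper at all: it is quoted verbatim from Janzer's work \cite{2021arxivJanzer}, so the only comparison available is against that original argument, whose strategy (total count of homomorphic $2k$-cycles versus the count of ``conflicting'' ones, splitting a conflict at cyclic distance $r$ into two walks of lengths $r$ and $2k-r$, dyadic pigeonholing, and a power-mean comparison of walk/closed-walk counts to extract $n^{1/k}$) your outline correctly identifies. Your preliminary reduction $\sum_{v\colon u\sim v}N_s(u,v)\leq\beta N_s(u)$ is also correct and is indeed the way the hypothesis on $\sim$ enters.

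However, there are two genuine gaps. First, the proposed regularization step is invalid: Lemma~\ref{lem:JiangSeiverK-almost} requires $e(G)\geq cn^{1+\eps}$, whereas the present lemma must hold for an arbitrary non-empty graph; more importantly, the hypothesis on $\sim$ is stated in terms of the degrees $d(v)$ in $G$ itself, and after passing to a subgraph $G'$ the bound ``at most $\beta d_G(v)$ conflicting neighbours'' does not become ``at most $\beta d_{G'}(v)$ conflicting neighbours in $G'$'', since degrees can drop drastically. The correct route works in $G$ directly, using the spectral inequality $\hom(C_{2a},G)\leq n^{1-a/k}\hom(C_{2k},G)^{a/k}$, which needs no regularity. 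Second, and more seriously, the entire content of the lemma is the estimate $\sum_{u\sim v}N_r(u,v)N_{2k-r}(u,v)=O\bigl(\beta k(\log n)^4 n^{1/k}\bigr)\hom(C_{2k},G)$, and you do not prove it: you assert that a four-fold dyadic decomposition ``should'' yield it. Note that over \emph{all} pairs one has $\sum_{u,v}N_r(u,v)N_{2k-r}(u,v)=\hom(C_{2k},G)$ exactly, so the whole difficulty is to convert the restriction to conflicting pairs into the factor $\beta n^{1/k}(\log n)^4$; this requires a careful interplay between the level-set bound $|\{(u,v):N_a(u,v)\geq\lambda\}|\leq\hom(C_{2a},G)/\lambda^2$, the $\sim$-sparsity bound, and the power-mean inequality, with a case analysis over which bound to apply on each dyadic block. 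As written, the hard step is named but not executed, so the proposal is an accurate roadmap of Janzer's proof rather than a proof.
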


We will also use the following quantitative version of the supersaturation of even cycles, recently proved by Kim, Lee, Liu and Tran~\cite{2022rainbow}.

\begin{lemma}[\cite{2022rainbow}]\label{Supersaturation of cycles}
    Any $n$-vertex graph $G$ with average degree $d\ge 2\cdot 10^{5}k^{3}n^{1/k}$ contains at least $\frac{1}{2}(2^{12}k)^{-k}d^{2k}$ copies of $2k$-cycles.
\end{lemma}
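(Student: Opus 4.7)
The plan is the standard homomorphism-counting approach. First, I would invoke Lemma~\ref{lem:SubgraphLargeDeg} to pass to a subgraph $G' \subseteq G$ with $\delta(G') \geq d/4$; set $m := v(G')$. The hypothesis gives $\delta(G') \geq 5 \cdot 10^{4}k^{3}n^{1/k} \gg k$, so greedy extension is available: every vertex of $G'$ is the starting endpoint of at least $(\delta(G')/2)^{k} \geq (d/8)^{k}$ paths of length $k$, and hence $\hom(P_{k+1}, G') \geq m(d/8)^{k}$.

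Next, I would lower bound $\hom(C_{2k}, G')$. Writing $W_{k}(u,v)$ for the number of walks of length $k$ from $u$ to $v$ in $G'$ and noting the identity $\hom(C_{2k}, G') = \sum_{u,v} W_{k}(u,v)^{2}$, Cauchy--Schwarz and the previous paragraph yield
\[
\hom(C_{2k}, G') \;\geq\; \frac{\big(\sum_{u,v} W_{k}(u,v)\big)^{2}}{m^{2}} \;=\; \frac{\hom(P_{k+1}, G')^{2}}{m^{2}} \;\geq\; (d/8)^{2k}.
\]
(Alternatively, Lemma~\ref{lem:path inequality} applied with $\ell\in\{1,2\}$ supplies the same lower bound on $\hom(P_{k+1}, G')$ directly from the average degree.)

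The main obstacle is converting this count of closed walks of length $2k$ into a count of embedded copies of $C_{2k}$. A closed walk $(x_{0}, \ldots, x_{2k-1})$ fails to be a cycle precisely when some pair $x_{i} = x_{j}$ coincides; each such walk decomposes into two shorter closed walks based at the common vertex $v = x_i = x_j$, of lengths $j-i$ and $2k-(j-i)$. Summing $W_{j-i}(v,v)\cdot W_{2k-(j-i)}(v,v)$ over $v$ and the $O(k^{2})$ choices of $(i,j)$, and bounding the number of closed walks of each length $\ell$ (either by further reducing to a $K$-almost regular subgraph via Lemma~\ref{lem:JiangSeiverK-almost} so that $\Delta \leq Kd$, or by iteratively applying Lemma~\ref{lem:path inequality} to the shorter walks), one shows that the number of non-injective closed $2k$-walks is at most half of $\hom(C_{2k}, G')$. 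The hypothesis $d \geq 2 \cdot 10^{5}k^{3}n^{1/k}$ provides exactly the slack needed to absorb the combinatorial constants arising from the $O(k^2)$ choices of repeated positions and from the exponential factors in the walk-count bounds. Dividing the resulting lower bound on injective closed $2k$-walks by the $4k$ automorphisms of $C_{2k}$ finally yields at least $\frac{1}{2}(2^{12}k)^{-k}d^{2k}$ embedded copies of $C_{2k}$, as claimed.
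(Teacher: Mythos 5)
This lemma is quoted from Kim--Lee--Liu--Tran \cite{2022rainbow} and the paper gives no proof of it, so I can only assess your argument on its own terms. Your first three steps are fine: passing to $\delta(G')\ge d/4$, the greedy bound $\hom(P_{k+1},G')\ge m(d/8)^k$, and the Cauchy--Schwarz step $\hom(C_{2k},G')=\sum_{u,v}W_k(u,v)^2\ge (d/8)^{2k}$ are all correct, and the final division by the $4k$ automorphisms is compatible with the constant $(2^{12}k)^{-k}$. The problem is that the entire difficulty of the lemma sits in your fourth step, and neither of the two tools you name there actually closes it. A degenerate closed $2k$-walk with $x_i=x_j$ contributes to $\sum_v W_a(v,v)W_b(v,v)$ with $a=j-i$, $b=2k-a$. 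If you bound this via a maximum-degree reduction ($\Delta\le Kd$), you get at most $2k^2\,m\,\Delta^{2k-2}$, and comparing with $(d/8)^{2k}$ forces $d^2\gtrsim k^2(8K)^{2k}n$, i.e.\ $d\gtrsim C^k n^{1/2}$ --- far stronger than the hypothesis $d\ge 2\cdot 10^5k^3n^{1/k}$, which for $k\ge 3$ permits $d=o(n^{1/2})$. (Moreover, Lemma~\ref{lem:JiangSeiverK-almost} shrinks the vertex set and hence the degree, so the resulting cycle count would no longer be expressible in terms of the original $d$.) The alternative you offer, Lemma~\ref{lem:path inequality}, controls $\hom(P_{\ell+1},G)$, i.e.\ walks between \emph{arbitrary} endpoints; relaxing $W_a(v,v)$ to the number of all $a$-walks from $v$ and then invoking it leads to a bound of order $n^{1/2}\hom(P_{2k+1},G)^{1/2}\approx n\,d^k$, which dwarfs $\hom(C_{2k},G)\approx d^{2k}$ in the regime $d\ll n^{1/2}$. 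So the assertion that ``the hypothesis provides exactly the slack needed'' is not justified by anything you have written.

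What is actually needed is the \emph{closed-walk} (trace) analogue of Lemma~\ref{lem:path inequality}: by Cauchy--Schwarz, $\sum_v W_a(v,v)W_b(v,v)\le \mathrm{tr}(A^{2a})^{1/2}\,\mathrm{tr}(A^{2b})^{1/2}$, and by the power-mean inequality applied to the nonnegative numbers $\lambda_i^2$ one has $\mathrm{tr}(A^{2a})\le n^{1-a/k}\,\mathrm{tr}(A^{2k})^{a/k}$. Multiplying these for $a+b=2k$ gives a degenerate-walk count of at most $2k^2\,n^{1/2}\,\hom(C_{2k},G')^{1/2}$, which is below $\tfrac12\hom(C_{2k},G')$ precisely because $\hom(C_{2k},G')\ge (d/8)^{2k}\gg k^4 n$ under the stated hypothesis. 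This inequality is not Lemma~\ref{lem:path inequality} and appears nowhere in your write-up; without it (or an equivalent device) the subtraction step does not go through, so as written the proof has a genuine gap at its central point.
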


The following lemma allows us to find a copy of the Cartesian product $P_{t}^{\square}=P_{t}\square K_2$ in a suitable asymmetric bipartite graph, see Figure~\ref{figure:K_2 path}. 

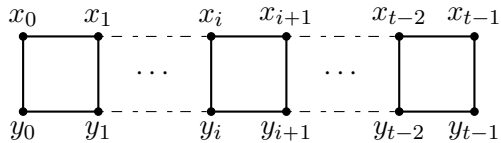
\begin{figure}[htbp]
\centering
\begin{tikzpicture}[scale=1]

\foreach \m in {0,1,2.5,3.5,5,6}{
\foreach \n in {0,1}{
    \node[shape = circle,draw = black,fill,inner sep=0pt,minimum size=1.0mm] at (\m,\n) {};
}
\draw[thick, fill=blue, fill opacity=0.3] (\m,0) -- (\m,1);
}

\node[above] at (0,1) {$x_0$};
\node[above] at (1,1) {$x_1$};
\node[above] at (2.5,1) {$x_i$};
\node[above] at (3.5,1) {$x_{i+1}$};
\node[above] at (5,1) {$x_{t-2}$};
\node[above] at (6,1) {$x_{t-1}$};
\node[below] at (0,0) {$y_0$};
\node[below] at (1,0) {$y_1$};
\node[below] at (2.5,0) {$y_i$};
\node[below] at (3.5,0) {$y_{i+1}$};
\node[below] at (5,0) {$y_{t-2}$};
\node[below] at (6,0) {$y_{t-1}$};

\node at (1.75,0.5) {$\cdots$};
\node at (4.25,0.5) {$\cdots$};

\foreach \n in {0,1}{
\draw[thick, fill opacity=0.3] (0,\n) -- (1,\n);
\draw[dash pattern=on 2pt off 3pt on 4pt off 4pt, fill opacity=0.3] (1,\n) -- (2.5,\n);
\draw[thick, fill opacity=0.3] (2.5,\n) -- (3.5,\n);
\draw[dash pattern=on 2pt off 3pt on 4pt off 4pt, fill opacity=0.3] (3.5,\n) -- (5,\n);
\draw[thick, fill opacity=0.3] (5,\n) -- (6,\n);
}
\end{tikzpicture}
\caption{$P_{t}^{\square}$}
\label{figure:K_2 path}
\end{figure}

\begin{lemma}\label{lem:p2timespt}
Let $t\in\mathbb{N}$ and $C=20t$. Suppose that $H$ is a bipartite graph with parts $X$ and $Y$ such that $e(H)\geq C|Y|$ and $d(x)\geq C|Y|^{1/2}$ for every $x\in X$. Then $H$ contains $P_{t}^{\square}$ as a subgraph.
\end{lemma}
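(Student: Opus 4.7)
I will construct $P_t^{\square}$ in $H$ one rung at a time, greedily. Label the desired rungs by $(p_i,q_i)\in X\times Y$ for $i\in[t]$; the edges of $P_t^{\square}$ in $H$ that must be verified are the $t$ rung edges $p_iq_i$ together with the $2(t-1)$ ladder edges $p_iq_{i+1}$ and $q_ip_{i+1}$ for $1\le i\le t-1$, and all the $p_i$'s (resp.\ all the $q_i$'s) must be pairwise distinct.

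\emph{Initialization.} Since $\sum_{q\in Y}d(q)=e(H)\geq C|Y|$, some $q_1\in Y$ has $d(q_1)\geq C$; pick any $p_1\in N(q_1)$.

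\emph{Iterative extension.} At step $i\ge 2$, given the partial ladder $(p_j,q_j)_{j<i}$, I must find a new rung $(p_i,q_i)$ with
$$
p_i\in N(q_{i-1})\setminus\{p_1,\ldots,p_{i-1}\},\quad q_i\in N(p_{i-1})\setminus\{q_1,\ldots,q_{i-1}\},\quad p_iq_i\in E(H).
$$
Disregarding distinctness, the candidate pairs are exactly the edges of the bipartite subgraph $H[N(q_{i-1}),N(p_{i-1})]$. The number of such edges equals
$$
e(N(q_{i-1}),N(p_{i-1}))\;=\;\sum_{x\in N(q_{i-1})}|N(x)\cap N(p_{i-1})|.
$$
Now $|N(p_{i-1})|=d(p_{i-1})\geq C|Y|^{1/2}$ by the $X$-min-degree, and every $x\in N(q_{i-1})\subseteq X$ satisfies $d_H(x)\geq C|Y|^{1/2}$. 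A double-counting argument that compares $\sum_{x\in N(q_{i-1})}d_H(x)$ with the edges going into $N(p_{i-1})$ versus $Y\setminus N(p_{i-1})$, together with the global bound $e(H)\geq C|Y|$, gives a lower bound on this sum that dominates the $O(t\cdot(|N(q_{i-1})|+|N(p_{i-1})|))$ forbidden pairs coming from the $(i-1)\le t-1$ previously used vertices on each side.

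\emph{Invariant.} To keep the iteration going, I maintain the invariant $d(q_i)\geq C/2$ at each step. This is enforced by choosing $q_i$ among those vertices of $N(p_{i-1})$ of degree at least $C/2$; the average-degree condition $e(H)\ge C|Y|$ implies that at most a bounded fraction of $Y$-vertices have degree below $C/2$, so after excluding them (and the previously used $q_j$'s) there are still enough candidates.

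\emph{Main obstacle.} The principal technical point is establishing the counting bound $e(N(q_{i-1}),N(p_{i-1}))\gg t\cdot\max(|N(q_{i-1})|,|N(p_{i-1})|)$ uniformly across all steps, since both conditions on $H$ must be combined delicately: the $X$-min-degree controls one side of the neighborhood and the total edge count controls the other. Once this inequality is in hand, the greedy procedure succeeds for $t$ steps and produces the desired copy of $P_t^{\square}$.
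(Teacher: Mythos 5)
Your plan founders exactly where you flag the ``main obstacle'': the inequality $e(N(q_{i-1}),N(p_{i-1}))\gg t\cdot\max(|N(q_{i-1})|,|N(p_{i-1})|)$ is simply not a consequence of the hypotheses for an arbitrary edge $p_{i-1}q_{i-1}$, even with your invariants $d(q_{i-1})\ge C/2$ and $d(p_{i-1})\ge C|Y|^{1/2}$. The set $N(p_{i-1})$ has size only about $C|Y|^{1/2}$, a vanishing fraction of $Y$, and nothing in the hypotheses forces the neighbours of the vertices in $N(q_{i-1})$ to land in it: one can build $H$ so that every $x\in N(q_{i-1})\setminus\{p_{i-1}\}$ has all of its (many) neighbours in $Y\setminus N(p_{i-1})$, except for $q_{i-1}$ itself. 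Then the only edges from $N(q_{i-1})$ into $N(p_{i-1})$ are those ending at $q_{i-1}$, which is already used, and the greedy step has no valid new rung. The double counting you invoke (comparing $\sum_{x\in N(q_{i-1})}d(x)$ against edges into $N(p_{i-1})$ versus $Y\setminus N(p_{i-1})$) gives nothing, because $e(H)\ge C|Y|$ is a global average condition and cannot localize edges into a prescribed small subset of $Y$. (A secondary issue: $e(H)\ge C|Y|$ does not imply that only a bounded fraction of $Y$ has degree below $C/2$; the degree sequence on $Y$ can be arbitrarily skewed.)

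The missing idea is a preliminary cleaning of $H$. The paper iteratively deletes $Y$-vertices of degree at most $e(H)/(4|Y|)$ and deletes any edge $xy$ for which $y$ has fewer than $e(H)/(8|Y|)$ neighbours $z$ with $d(x,z)\ge 2t$; a double-counting argument (each pair $(x,z)\in X^2$ with $d(x,z)<2t$ can be blamed for at most $2t$ edge deletions, and $e(H)\ge C|X|\,|Y|^{1/2}$ bounds the number of such pairs against $e(H)$) shows that at most half of $E(H)$ is removed, so a non-empty subgraph $H'$ survives in which \emph{every} edge admits at least $2t$ robust extensions. Only then does your greedy rung-by-rung construction (which is the same embedding step as the paper's ``attach $4$-cycles along an edge'') go through, because the extension property is now guaranteed for whichever edge the process has reached, and the $2t$ thresholds absorb the previously used vertices. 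Without this stabilization step, the argument has a genuine gap.
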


\begin{proof}
It suffices to show that $H$ has a non-empty subgraph $H'$ such that for every edge $xy\in E(H')$, $y$ has at least $2t$ neighbors $z\in V(H')$ with $d_{H'}(x,z)\geq 2t$. Indeed, if such a subgraph exists, then one can embed $P_{t}^{\square}$ by greedily attaching $4$-cycles along an edge one by one.

Define a sequence $H=H_0\supseteq H_1\supseteq H_2\supseteq \cdots $ of graphs as follows. Having defined $H_i$, if there is a vertex $y\in Y$ with $1\leq d_{H_i}(y)\leq \frac{e(H)}{4|Y|}$, then let $H_{i+1}=H_{i}-y$, and call this deletion type 1. If no such vertex exists but there is an edge $e=xy\in E(H_i)$ for which $y\in Y$ has less than $\frac{e(H)}{8|Y|}$ neighbors $z$ in $H_i$ satisfying $d_{H_i}(x,z)\geq 2t$, then let $H_{i+1}=H_{i}-e$, and call this deletion type 2. If no such vertex or edge exists, then set $H'=H_i$ and terminate the process.

Now for any $xy\in E(H')$ with $y\in Y$, it follows immediately from the definition that $y$ has at least $\frac{e(H)}{8|Y|}\geq \frac{C}{8}\geq 2t$ neighbors $z$ in $H'$ such that $d_{H'}(x,z)\geq 2t$. It remains to check that $H'$ is non-empty.

We shall prove that in total at most $\frac{e(H)}{4}$ edges are deleted by type 1 deletions and at most $\frac{e(H)}{4}$ edges are deleted by type 2 deletions. Indeed, there are at most $|Y|$ type 1 deletion steps and each of them removes at most $\frac{e(H)}{4|Y|}$ edges, so it is clear that at most $\frac{e(H)}{4}$ edges are removed during type 1 deletions.

Since in each type 2 deletion step, we remove precisely one edge, it suffices to prove that there are at most $\frac{e(H)}{4}$ such deletion steps throughout the process. Assume that the edge $xy$ gets deleted from $H_i$ because $y$ has less than $\frac{e(H)}{8|Y|}$ neighbors $z$ in $H_i$ such that $d_{H_i}(x,z)\geq 2t$. Since no deletion of type 1 was applied to $H_i$, we have $d_{H_i}(y)\ge\delta(H_{i})>\frac{e(H)}{4|Y|}$. It follows that $y$ has more than $\frac{e(H)}{8|Y|}$ neighbors $z$ in $H_i$ such that $d_{H_i}(x,z)< 2t$. For each such $z$, $y$ is a common neighbor of $x$ and $z$, so $d_{H_{i+1}}(x,z)=d_{H_i}(x,z)-1$. However, the condition $d_{H_i}(x,z)<2t$ infers that any pair $(x,z)$ of vertices in $X$ can be involved in at most $2t$ such type~2 deletions. Let $m$ be the number of edges deleted in type~2 deletions. Via double counting the number of such triples $(x,y,z)$, we have
\begin{equation*}
    m\cdot\frac{e(H)}{8|Y|}\le 2t\cdot |X|^{2}.
\end{equation*}
Thus, there are at most $\frac{16t|X|^2|Y|}{e(H)}$ edges deleted by type 2 deletions. As $e(H)\geq |X|\cdot C|Y|^{1/2}$, we have $\frac{16t|X|^2|Y|}{e(H)}\leq \frac{e(H)}{4}$, completing the proof.
\end{proof}

As a corollary of Lemma \ref{lem:p2timespt}, we have the following result.

\begin{lemma} \label{lem:large codegree rare}
    For any $K$ there is some $C_0=C_0(K,\ell)$ such that the following holds. Let $G$ be an $n$-vertex $C_{2\ell}^{\square}$-free bipartite graph with average degree $d\geq C_0n^{1/2}$ and maximum degree at most $Kd$, and let $uv\in E(G)$. Then the number of $4$-cycles $uvwz$ with $d(u,w)>C_0d^{1/2}$ is at most $C_0d$.
\end{lemma}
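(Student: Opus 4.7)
The plan is to argue by contradiction and reduce to Lemma~\ref{lem:p2timespt}. Suppose the number of 4-cycles described in the statement exceeds $C_0 d$. I will build a bipartite auxiliary graph encoding these 4-cycles, extract a $P_{t}^{\square}$ from it via Lemma~\ref{lem:p2timespt}, and then close this $P_t^\square$ into a copy of $C_{2\ell}^{\square}$ inside $G$ using the external vertices $u$ and $v$.

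Put $W := \{w \in N(v)\setminus\{u\} : d(u,w) > C_0 d^{1/2}\}$ and $Y := N(u) \setminus \{v\}$, and let $H$ be the bipartite graph on parts $W$ and $Y$ whose edges are the $G$-edges between $W$ and $Y$. Since $G$ is bipartite with $u, v$ in opposite classes, so are $W$ and $Y$, hence $H$ is honestly bipartite; and each edge $wz \in E(H)$ records a unique 4-cycle $uvwz$ of the type being counted, so $e(H) > C_0 d$. Set $t := 2\ell - 1$ and $C := 20t$. The two hypotheses of Lemma~\ref{lem:p2timespt} are $e(H) \ge C|Y|$ and $d_H(w) \ge C|Y|^{1/2}$ for every $w \in W$. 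Both follow from $|Y| \le Kd$ together with $d_H(w) \ge d(u,w) - 1 \ge \tfrac12 C_0 d^{1/2}$, once $C_0$ is chosen sufficiently large in terms of $K$ and $\ell$. Lemma~\ref{lem:p2timespt} then produces a copy of $P_t^{\square}$ in $H$, with vertices $x_0, \ldots, x_{t-1}, y_0, \ldots, y_{t-1}$ in the usual labelling.

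Bipartiteness of $H$ forces one colour class of $P_t^\square$ into $W$ and the other into $Y$; after relabelling, $\{x_i : i \text{ even}\} \cup \{y_i : i \text{ odd}\} \subseteq W$ and $\{x_i : i \text{ odd}\} \cup \{y_i : i \text{ even}\} \subseteq Y$. The choice $t = 2\ell - 1$ ensures $t - 1$ is even, which forces both $x_0, x_{t-1}$ to lie in $W \subseteq N(v)$ and both $y_0, y_{t-1}$ to lie in $Y \subseteq N(u)$. Consequently $v x_0 x_1 \cdots x_{t-1} v$ and $u y_0 y_1 \cdots y_{t-1} u$ are two vertex-disjoint $2\ell$-cycles in $G$; together with the matching edges $x_i y_i$ inherited from $P_t^\square$ and the given edge $uv$, they form a copy of $C_{2\ell}^\square$, contradicting $C_{2\ell}^\square$-freeness.

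The main subtlety is this parity alignment: $t$ must be odd so that the two endpoints of each path in $P_t^\square$ share a colour class, enabling both to be attached to a common external vertex ($u$ or $v$) without breaking the bipartite structure of $G$. Once this is arranged, vertex-disjointness of the two cycles from each other and from $\{u,v\}$ is automatic because $W, Y, \{u\}, \{v\}$ are pairwise disjoint subsets of $V(G)$, and the cyclic order of the matching lines up with no further work.
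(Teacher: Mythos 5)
Your proof is correct and follows essentially the same route as the paper's: the same auxiliary bipartite graph on $N(v)$-vertices of large codegree with $u$ versus $N(u)\setminus\{v\}$, the same application of Lemma~\ref{lem:p2timespt} with $t=2\ell-1$, and the same extension of the resulting $P_{2\ell-1}^{\square}$ to a $C_{2\ell}^{\square}$ via $u$ and $v$ (your parity discussion just makes explicit a step the paper leaves to the reader). The only cosmetic difference is that you run the argument by contradiction, using $e(H)>C_0d$ together with $|Y|\le Kd$ to verify $e(H)\ge C|Y|$, whereas the paper applies the lemma in contrapositive form to conclude $e(H)<40\ell|Y|\le C_0 d$ directly.
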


\begin{proof}
We will show that $C_0=100K \ell$ is suitable. Let $X=\{w\in N(v): d_G(u,w)> C_0d^{1/2}\}\setminus \{u\}$ and let $Y=N(u)\setminus \{v\}$. Since $G$ is bipartite, $X$ and $Y$ are disjoint sets. Define a bipartite graph $H$ with parts $X$ and $Y$ in which there is an edge between $x\in X$ and $y\in Y$ if and only if $xy$ is an edge in $G$. Now observe that for each $x\in X$, we have $d_H(x)\geq d_G(u,x)-1>C_0d^{1/2}/2\geq C_0 K^{-1/2} |Y|^{1/2}/2\geq 50\ell |Y|^{1/2}$. Moreover, $H$ does not contain $P_{2\ell-1}^{\square}$ as a subgraph since such a subgraph could be extended to a $C_{2\ell}^{\square}$ in $G$, using the additional vertices $u$ and $v$. Hence, by Lemma \ref{lem:p2timespt}, we must have $e(H)<40\ell|Y|$. This implies that there are at most $40\ell|Y|\leq 40\ell Kd\leq C_0d$ cycles $uvwz$ with $d_G(u,w)>C_0d^{1/2}$.
\end{proof}

\section{Grid, cylinder and torus}\label{sec:Grids}
In this section, we will prove Theorems~\ref{thm:normal grid}, ~\ref{thm:grid} and~\ref{thm:torus}. The proofs of all these results utilize a strategy that reduces the embedding problems to finding a collection of paths or cycles with a certain nice property.

For grids, we will need a collection of paths.

\begin{defn}
Let $\alpha>0$ and $k\in\mathbb{N}$. We say that a collection $\mathcal{P}$ of (labelled) paths $P_{k}$ is \emph{$\alpha$-rich} if for any member $x_1x_2\cdots x_k\in\mathcal{P}$ and any $2\leq i\leq k-1$, there exist at least $\alpha$ distinct vertices $x_i'$ such that $x_1x_2\cdots x_{i-1}x_i'x_{i+1}\cdots x_k\in \mathcal{P}$.
\end{defn}

For the quadrangulations of the cylinder and the torus, we will need a collection of cycles.

\begin{defn}
Let $\alpha>0$ and $\ell\in\mathbb{N}$ with $\ell\ge 2$. We say that a collection $\mathcal{C}$ of (labelled) cycles $C_{2\ell}$ is \emph{$\alpha$-rich} if for any member $x_1x_2\cdots x_{2\ell}x_1\in\mathcal{C}$ and any $1\leq i\leq 2\ell$, there exist at least $\alpha$ distinct vertices $x_i'$ such that $x_1x_2\cdots x_{i-1}x_i'x_{i+1}\cdots x_{2\ell}x_1\in\mathcal{C}$.
\end{defn}

\begin{prop}[Finding a grid from a rich collection of paths]\label{prop:normal grid}
    Let $\mathcal{P}$ be a non-empty $\alpha$-rich collection of paths of length $2t-2$ in $G$.
    If $\alpha \ge  t^2$, then $G$ contains a copy of $F_{t,t}$ as a subgraph. 
\end{prop}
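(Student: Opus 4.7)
My plan is to pick any path $P = x_1 x_2 \cdots x_{2t-1} \in \mathcal{P}$ and think of it as the ``bottom-L'' of a future $F_{t,t}$-embedding: its first $t$ vertices will form the leftmost column and its last $t$ vertices the bottom row of the grid, with the corner $x_t$ shared between them. Formally I set $v_{i,1} := x_i$ for $1 \le i \le t$ and $v_{t,j} := x_{t+j-1}$ for $1 \le j \le t$, which places the $2t-1$ boundary vertices. It then remains to embed the $(t-1)^2$ interior vertices $v_{i,j}$ with $1 \le i \le t-1$ and $2 \le j \le t$, which I will do by using $\alpha$-richness to ``slide'' the corner of the L upward one row at a time.

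More precisely, I iterate through rows $r = t-1, t-2, \ldots, 1$ and embed the new row $r$ via a block of $t-1$ swaps on the current path. The invariant to maintain is: just before the block for row $r$, the current path in $\mathcal{P}$ traces in the abstract grid the monotone lattice path that goes down column $1$ from $(1,1)$ to $(r+1,1)$, then right along row $r+1$ to $(r+1,t)$, then down column $t$ to $(t,t)$. For $r = t-1$ this is exactly the initial bottom-L, so the invariant holds at the start. To fill row $r$, I perform swaps at positions $r+1, r+2, \ldots, r+t-1$ of the current path in order; the $j$-th swap replaces the vertex at position $r+j$ (identified from the invariant as $v_{r+1,j}$) by a common $G$-neighbor of the path-vertices at positions $r+j-1$ and $r+j+1$, which $\alpha$-richness produces.

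The crucial check is that at every such swap the two relevant path-neighbors correspond to grid positions $(r,j)$ and $(r+1,j+1)$ (using $v_{r,1} = x_r$ when $j=1$), whose coordinates differ by exactly $(1,1)$; the other intermediate grid position is $(r,j+1)$, which is the label I assign to the newly added vertex. This immediately realises the ``left'' edge $v_{r,j}v_{r,j+1}$ and the ``below'' edge $v_{r,j+1}v_{r+1,j+1}$ in $G$; the ``right'' edge $v_{r,j+1}v_{r,j+2}$ is realised by the next swap in the same block, and the ``above'' edge $v_{r-1,j+1}v_{r,j+1}$ by the analogous swap in the next iteration for row $r-1$. After the full block of $t-1$ swaps the invariant holds with $r$ replaced by $r-1$; after the $r = 1$ iteration, all $t^2$ vertices are placed and every edge of $F_{t,t}$ is realised in $G$. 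The bound $\alpha \ge t^2$ is used precisely for the distinctness check: at each swap, $\alpha$-richness provides at least $t^2$ candidate replacements, while at most $t^2 - 1$ grid vertices have been embedded so far, so a fresh vertex is always available.

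The main obstacle is just careful bookkeeping: verifying that the path-invariant is preserved by each block of $t-1$ swaps, and that at every swap the two path-neighbors of the swapped vertex genuinely correspond to grid positions at diagonal distance $(1,1)$, so that the swap produces precisely the intended next row-$r$ vertex. This is a routine though slightly tedious induction that tracks the state of the path through every individual swap.
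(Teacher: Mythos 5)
Your proof is correct and is essentially the paper's argument: both start from a single path of $\mathcal{P}$ laid out as an L-shaped boundary of the grid and repeatedly use $\alpha$-richness to replace the corner vertex of the L by a fresh vertex filling the diagonal grid position, sweeping row by row, with $\alpha\ge t^2$ guaranteeing a replacement distinct from the at most $t^2-1$ vertices already embedded. The only differences are cosmetic (you orient the initial L as column $1$ plus row $t$ and swap in increasing position order, whereas the paper uses row $1$ plus column $t$ and swaps in decreasing order).
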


\begin{proof}
    Let $Q_0=x_{1,1}x_{1,2}\cdots x_{1,t}x_{2,t}\cdots x_{t,t}$ be a path in $\mathcal{P}$. Since $\mathcal{P}$ is $\alpha$-rich, we can replace $x_{1,t}$ by a different vertex $x_{2,{t-1}}$ and get another path $Q_1=x_{1,1}x_{1,2}\cdots x_{1,t-1}x_{2,t-1}x_{2,t}x_{3,t}\cdots x_{t,t}$ in $\mathcal{P}$. Again, since $\mathcal{P}$ is $\alpha$-rich and $Q_1\in \mathcal{P}$, we can replace $x_{1,t-1}$ by a vertex $x_{2,t-2}$ that is different from all the previous vertices and get another path $Q_2=x_{1,1}x_{1,2}\cdots x_{1,t-2}x_{2,t-2}x_{2,t-1}x_{2,t}x_{3,t}\cdots x_{t,t}$ in $\mathcal{P}$. We may continue like this and eventually get a path $Q_{t-1}=x_{1,1}x_{2,1}x_{2,2}\cdots x_{2,t}x_{3,t}\cdots x_{t,t}$ in $\mathcal{P}$. Then we may replace $x_{2,t}$ by a vertex $x_{3,t-1}$ that is different from all previous vertices, and get a path $Q_t=x_{1,1}x_{2,1}x_{2,2}x_{2,3}\cdots x_{2,t-1}x_{3,t-1}x_{3,t}x_{4,t}\cdots x_{t,t}$ in $\mathcal{P}$. Continuing this process in the obvious way, we end up with a path $x_{1,1}x_{2,1}\cdots x_{t,1}x_{t,2}\cdots x_{t,t}$ in $\mathcal{P}$. The vertices $x_{i,j}$ ($i,j\in [t]$) defined through this process form a $t\times t$ grid~(e.g. for $t=4$, see Figure~\ref{grid:t=4}).
\end{proof}

\begin{figure}[h]
    \centering
    \includegraphics[width=12cm]{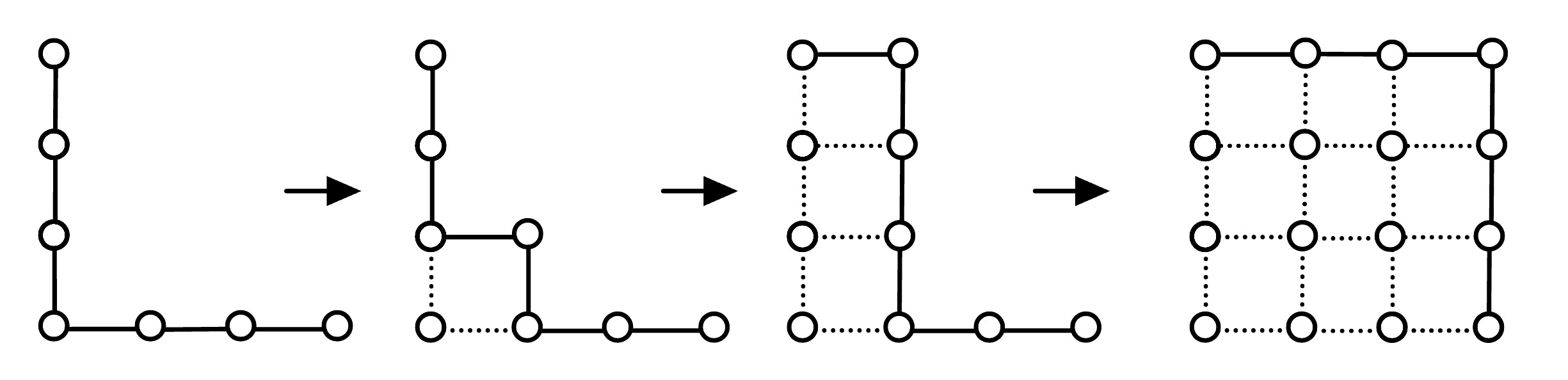}
    \caption{The process in Proposition~\ref{prop:normal grid} for $t=4$}
    \label{grid:t=4}
\end{figure}

\begin{prop}[Finding a quadrangulated cylinder from a rich collection of cycles]\label{prop:45Grid}
    Let $\mathcal{C}$ be a non-empty $\alpha$-rich collection of $2\ell$-cycles in $G$.
    If $\alpha \ge  k\ell$, then $G$ contains a copy of $P_{k,\ell}$ as a subgraph. 
\end{prop}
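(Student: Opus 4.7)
The plan is to adapt the vertex-replacement scheme used for Proposition~\ref{prop:normal grid}, now starting from a cycle rather than a path and iterating over the $k$ rows of $P_{k,\ell}$. I begin by picking any $C_1 = y_1 y_2 \cdots y_{2\ell} y_1 \in \mathcal{C}$ and declaring its odd-indexed vertices to be row~1 of the intended copy of $P_{k,\ell}$ and its even-indexed vertices to be row~2; the edges of $C_1$ then exactly realize the $2\ell$-cycle that $P_{k,\ell}$ places between rows~1 and~2.

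The inductive step produces one new row at a time. Suppose we already have a cycle $C_{s-1}\in\mathcal{C}$ whose alternating vertices form rows $s-1$ and $s$ of the embedding. To build row $s+1$, I walk through the $\ell$ positions currently occupied by the row-$(s-1)$ vertices in $C_{s-1}$ and, invoking $\alpha$-richness at each step, replace them one at a time with brand-new vertices, insisting that each replacement is distinct from every vertex of rows $1,\dots,s$ and from every earlier replacement in the current round. Since $P_{k,\ell}$ has $k\ell$ vertices in total and $\alpha\ge k\ell$, fewer than $\alpha$ vertices are ever forbidden at a given step, so an unused candidate always exists. After $\ell$ replacements the resulting cycle $C_s\in\mathcal{C}$ alternates between the row-$s$ vertices and the $\ell$ new vertices, which become row $s+1$. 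Iterating for $s=2,3,\dots,k-1$ yields all $k$ rows.

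The step requiring care is verifying that the cycle $C_s$ really does realize the edge pattern prescribed by $P_{k,\ell}$ between rows $s$ and $s+1$, since the parity-dependent diagonal rule makes the two inter-row $2\ell$-cycles (the one between rows $s-1,s$ and the one between rows $s,s+1$) look superficially different. The key observation is that because the replacements only touch row-$(s-1)$ vertices, the row-$s$ vertices appear in $C_s$ in the same cyclic order as in $C_{s-1}$. Choosing a consistent labeling $x_{s,1},\dots,x_{s,\ell}$ of each row according to this cyclic order, and assigning each newly introduced vertex the label dictated by the position it occupies in $C_s$, forces the edges of $C_s$ to match exactly those of the prescribed inter-row cycle in $P_{k,\ell}$. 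This relabeling check is the only genuine obstacle; once it is in place, the construction of $P_{k,\ell}$ follows directly from $\alpha$-richness and the budget $\alpha\ge k\ell$.
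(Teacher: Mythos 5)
Your proof is correct and takes essentially the same approach as the paper's: start from one cycle in $\mathcal{C}$ and use $\alpha$-richness to perform single-vertex replacements that generate each new row in turn, with the budget $\alpha \ge k\ell$ absorbing all previously embedded vertices. Your explicit verification that the shared row keeps its cyclic order, so the labels can be chosen to match the parity-dependent edge pattern of $P_{k,\ell}$, is a detail the paper leaves implicit.
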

\begin{proof}
    We find a copy of $P_{k,\ell}$ as follows. First, take an arbitrary $2\ell$-cycle $C_{0}\in\mathcal{C}$ with $C_{0}=a_{1}b_{1}a_{2}b_{2} \cdots a_{\ell}b_{\ell}a_{1}$. 
    Since $\mathcal{C}$ is $\alpha$-rich, and $\alpha \ge k\ell\ge V(C_{0})$, there exist $C_{1}\in\mathcal{C}$ and $c_{\ell}\notin V(C_{0})$ such that $C_{0}-a_{1}=C_{1}-c_{\ell}$. Then, for $i=1,2,\ldots,\ell-1$, there exist $C_{i+1}\in\mathcal{C}$ and $c_{i}\notin\bigcup_{j=0}^{i}V(C_{j})$ such that $C_{i}-a_{i+1}=C_{i+1}-c_{i}$. The union of the cycles $C_0,C_1,\dots,C_{\ell}$ gives a copy of $P_{3,\ell}$ (e.g. for $\ell=3$, see Figure~\ref{alphaP33}). Repeating the same process, as long as $\alpha \ge k\ell$, we can greedily find a copy of $P_{k,\ell}$.
\end{proof}

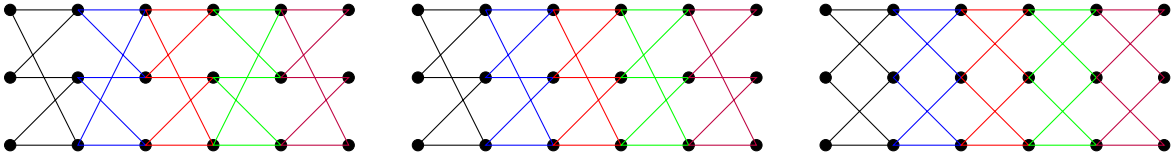
\begin{figure}[h]
\centering
\begin{tikzpicture}[scale=0.9]
\begin{scope}
\foreach \m in {1,2,3,4,5,6}
\foreach \n in {1,2,3}{
\node[shape = circle,draw = black,fill=black,inner sep=0pt,minimum size=1.5mm] at (\m,\n) {};
}

\draw[black] (1,3) -- (2,3) -- (1,2) -- (2,2) --(1,1) -- (2,1) -- (1,3);
\draw[blue] (2,3) -- (3,2) -- (2,2) -- (3,1) --(2,1) -- (3,3) -- (2,3);
\draw[red] (3,3) -- (4,3) -- (3,2) -- (4,2) --(3,1) -- (4,1) -- (3,3);
\draw[green] (4,3) -- (5,2) -- (4,2) -- (5,1) --(4,1) -- (5,3) -- (4,3);
\draw[purple] (5,3) -- (6,3) -- (5,2) -- (6,2) --(5,1) -- (6,1) -- (5,3);

\end{scope}
\end{tikzpicture}
\hspace{5mm}
\begin{tikzpicture}[scale=0.9]
\begin{scope}
\foreach \m in {1,2,3,4,5,6}
\foreach \n in {1,2,3}{
\node[shape = circle,draw = black,fill=black,inner sep=0pt,minimum size=1.5mm] at (\m,\n) {};
}

\draw[black] (1,3) -- (2,3) -- (1,2) -- (2,2) --(1,1) -- (2,1) -- (1,3);
\draw[blue] (2,3) -- (3,3) -- (2,2) -- (3,2) --(2,1) -- (3,1) -- (2,3);
\draw[red] (3,3) -- (4,3) -- (3,2) -- (4,2) --(3,1) -- (4,1) -- (3,3);
\draw[green] (4,3) -- (5,3) -- (4,2) -- (5,2) --(4,1) -- (5,1) -- (4,3);
\draw[purple] (5,3) -- (6,3) -- (5,2) -- (6,2) --(5,1) -- (6,1) -- (5,3);

\end{scope}
\end{tikzpicture}
\hspace{5mm}
\begin{tikzpicture}[scale=0.9]
\begin{scope}
\foreach \m in {1,2,3,4,5,6}
\foreach \n in {1,2,3}{
\node[shape = circle,draw = black,fill=black,inner sep=0pt,minimum size=1.5mm] at (\m,\n) {};
}

\draw[black] (1,3) -- (2,3) -- (1,2) -- (2,1) --(1,1) -- (2,2) -- (1,3);
\draw[blue] (3,3) -- (2,3) -- (3,2) -- (2,1) --(3,1) -- (2,2) -- (3,3);
\draw[red] (3,3) -- (4,3) -- (3,2) -- (4,1) --(3,1) -- (4,2) -- (3,3);
\draw[green] (5,3) -- (4,3) -- (5,2) -- (4,1) --(5,1) -- (4,2) -- (5,3);
\draw[purple] (5,3) -- (6,3) -- (5,2) -- (6,1) --(5,1) -- (6,2) -- (5,3);
\end{scope}
\end{tikzpicture}
\caption{Three ways to draw the graph $P_{6,3}$}
\end{figure}

\begin{figure}[h]
    \centering
    \includegraphics[width=11.8cm]{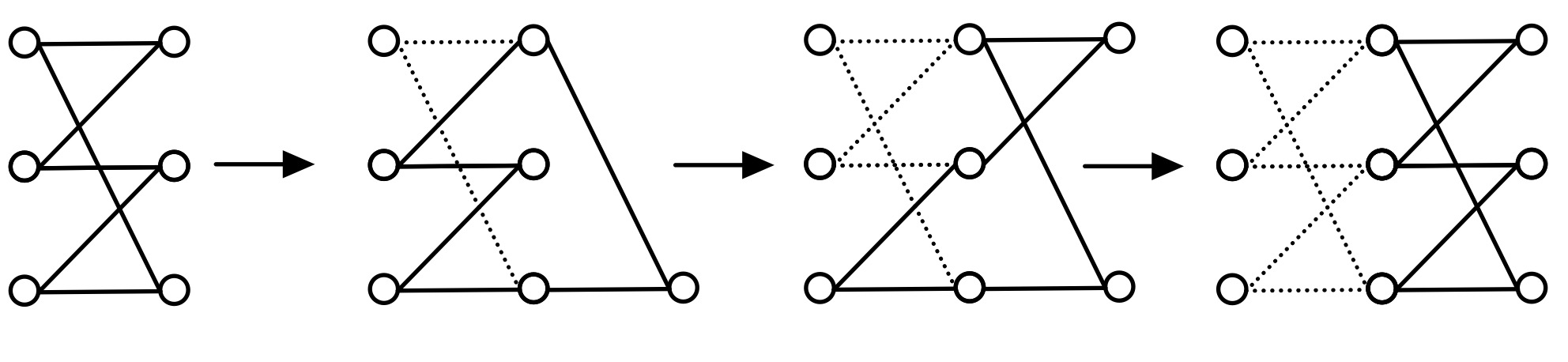}
    \caption{The process in Proposition~\ref{prop:45Grid} for $\ell=3$}
    \label{alphaP33}
\end{figure}

\begin{prop}[Finding a quadrangulated torus from a rich collection of cycles]\label{prop:torus}
    Let $k$ be an even integer, let $G$ be an $n$-vertex bipartite graph and let $\mathcal{C}$ be a non-empty $\alpha$-rich collection of $2\ell$-cycles in $G$.
    If $\frac{\alpha}{\ell^{2}}>  2^{20}(\frac{k}{2})^{3}(\ell\log{n})^{4}n^{2\ell/k}$, then $G$ contains a copy of $T_{k,\ell}$ as a subgraph. 
\end{prop}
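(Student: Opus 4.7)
The plan is to encode the task of finding $T_{k,\ell}$ in $G$ as the task of finding a suitable homomorphic cycle in an auxiliary graph $H$, then to apply Lemma~\ref{lem:UpperBoundHomC2k}.

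First I would build $H$ on vertex set $V(G)^{\ell}$ (so $|V(H)|\le n^{\ell}$) by placing an edge between $\V{a}=(a_{1},\ldots,a_{\ell})$ and $\V{b}=(b_{1},\ldots,b_{\ell})$ whenever the $2\ell$-cycle $a_{1}b_{1}a_{2}b_{2}\cdots a_{\ell}b_{\ell}a_{1}$ lies in $\mathcal{C}$, so that each edge of $H$ records a cycle of $\mathcal{C}$ split into two ``rows'' of length $\ell$. Declare $\V{a}\sim\V{b}$ on $V(H)$ iff the underlying $\ell$-element subsets of $V(G)$ share a common vertex. A homomorphic $k$-cycle $\V{R}_{0},\ldots,\V{R}_{k-1}$ in $H$ with no two $\V{R}_{i}$'s $\sim$-related then yields $k$ pairwise vertex-disjoint $\ell$-tuples such that every cyclically consecutive pair spans a $2\ell$-cycle of $\mathcal{C}$; after a bookkeeping step that selects the reading (orientation) of each edge of the $k$-cycle so that the diagonals of the glued cycles alternate in the pattern required by the definition of $T_{k,\ell}$, this yields a copy of $T_{k,\ell}$ in $G$.

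The quantitative heart of the argument is the bound on the parameter $\beta$ of Lemma~\ref{lem:UpperBoundHomC2k}. Fix $\V{a},\V{b}\in V(H)$ and a coordinate $j\in[\ell]$. Partition the neighbor set $N_{H}(\V{b})$ into equivalence classes of neighbors that agree on all coordinates except possibly the $j$-th. By $\alpha$-richness applied at the position of the $j$-th ``$c$-slot'' of the cycle $b_{1}c_{1}\cdots b_{\ell}c_{\ell}b_{1}$, every non-empty class has size at least $\alpha$, while any prescribed value at coordinate $j$ is taken by at most one tuple within each class. Therefore, for each fixed $j\in[\ell]$ and $v\in V(G)$, at most $d_{H}(\V{b})/\alpha$ neighbors $\V{c}$ of $\V{b}$ satisfy $c_{j}=v$. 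Summing over the $\ell$ vertices of $\V{a}$ and the $\ell$ coordinates of $\V{c}$ shows that at most $\ell^{2}d_{H}(\V{b})/\alpha$ neighbors of $\V{b}$ are $\sim$-related to $\V{a}$, so we may take $\beta=\ell^{2}/\alpha$.

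Finally, applying Lemma~\ref{lem:UpperBoundHomC2k} to $H$ with $k/2$ in place of its $k$ (legitimate since $k$ is even) and using the estimates $|V(H)|^{2/k}\le n^{2\ell/k}$ and $\log|V(H)|\le\ell\log n$, the lemma's required bound $\beta<(2^{20}(k/2)^{3}(\log|V(H)|)^{4}|V(H)|^{2/k})^{-1}$ is implied by the proposition's hypothesis $\alpha/\ell^{2}>2^{20}(k/2)^{3}(\ell\log n)^{4}n^{2\ell/k}$. The lemma then delivers the desired homomorphic $k$-cycle with no two entries $\sim$-related, and hence the required copy of $T_{k,\ell}$ in $G$. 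The main obstacle will be the first-paragraph bookkeeping: ensuring that the abstract glued structure is genuinely $T_{k,\ell}$, with its specified alternating diagonal pattern, rather than some other quadrangulation of the torus with the same row and column counts.
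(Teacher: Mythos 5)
Your construction, your counting, and your application of Lemma~\ref{lem:UpperBoundHomC2k} are exactly those of the paper: the auxiliary graph on $\ell$-tuples, the partition of $N_H(\boldsymbol{b})$ into classes of size at least $\alpha$ yielding $\beta=\ell^2/\alpha$, and the invocation of the lemma with $k/2$ in place of its $k$ all match, and that part of your argument is correct. The one point you flag as ``the main obstacle'' is, however, a genuine gap in your version, and it is precisely what the paper's slightly different definition of $H$ is designed to eliminate. With your symmetric edge relation on $V(G)^{\ell}$ (the pair $\{\boldsymbol{a},\boldsymbol{b}\}$ is an edge if at least one of the two interleavings $a_1b_1\cdots a_\ell b_\ell a_1$ or $b_1a_1\cdots b_\ell a_\ell b_1$ lies in $\mathcal{C}$), the diagonal pattern between consecutive rows of the glued structure depends on which reading witnesses each edge of the $k$-cycle, and there is no reason for these readings to alternate correctly: a non-alternating pattern produces a Dehn-twisted quadrangulation of the torus which need not be isomorphic to (or contain) $T_{k,\ell}$. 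You cannot in general ``select'' the reading, since only one of the two interleavings may belong to $\mathcal{C}$.

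The fix is what the paper does. Fix a bipartition $A\cup B$ of $G$, take $V(H)=A^{\ell}\cup B^{\ell}$, and declare $\boldsymbol{x}\in A^{\ell}$ adjacent to $\boldsymbol{y}\in B^{\ell}$ only when the cycle read starting from the $A$-side, namely $x_1y_1x_2y_2\cdots x_\ell y_\ell x_1$, lies in $\mathcal{C}$. Since $G$ is bipartite, every cycle of $\mathcal{C}$ alternates between $A$ and $B$, so nothing is lost; $H$ is bipartite, so in any $k$-cycle $\boldsymbol{a}_1\cdots\boldsymbol{a}_k$ of $H$ the tuples alternate between $A^{\ell}$ and $B^{\ell}$ and every edge has a canonical reading (the $A$-tuple first). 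Setting $x_{i,j}=a_{i,j}$, the edges between rows $i$ and $i+1$ are $\{x_{i,j}x_{i+1,j}\}\cup\{x_{i,j+1}x_{i+1,j}\}$ when $\boldsymbol{a}_i\in A^{\ell}$ and $\{x_{i,j}x_{i+1,j}\}\cup\{x_{i,j}x_{i+1,j+1}\}$ when $\boldsymbol{a}_i\in B^{\ell}$, so the two patterns alternate with the parity of $i$ exactly as in the definition of $T_{k,\ell}$, and $k$ being even makes the cycle close up consistently. With this modification your proof is complete; in particular, your codegree estimate for the relation $\sim$ goes through verbatim, as it never used the symmetric definition of the edges.
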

\begin{proof}
    Let $A\cup B$ be a bipartition of $G$. We define a bipartite auxiliary graph $H$ as follows. The vertex set of $H$ is $A^{\ell}\cup B^{\ell}$ and two vertices $\boldsymbol{x}=(x_1,x_2,\ldots,x_\ell)\in A^{\ell}$ and $\boldsymbol{y}=(y_1,y_2,\ldots,y_\ell)\in B^{\ell}$ are adjacent in $H$ if $x_{1}y_{1}x_{2}y_{2}\cdots x_{\ell}y_{\ell}x_{1} \in \mathcal{C}$.
    
    We claim that for any $\boldsymbol{x},\boldsymbol{y}\in V(H)$, the number of $\boldsymbol{z}=(z_{1},z_{2},\ldots,z_{\ell})\in N_H(\boldsymbol{y})$ such that there exist some $i,j\in[\ell]$ with $z_i=x_j$ is at most $ \frac{\ell^2}{\alpha} d_H(\boldsymbol{y})$. To see this, let $T:=\{\boldsymbol{z}\in N_H(\boldsymbol{y})$: there exist $i,j\in[\ell]$ with $z_i=x_j\}$ and $T_{i,j}:=\{\boldsymbol{z}\in N_H(\boldsymbol{y})$: $z_i=x_j\}$. Clearly $|T|\le \sum_{i,j=1}^{\ell}|T_{i,j}|$. Since $\mathcal{C}$ is $\alpha$-rich, for each $\boldsymbol{z}\in T_{i,j}$, we have at least $\alpha$ distinct choices of $\boldsymbol{z}'$ such that $\boldsymbol{z}'=(z_{1}',z_{2}',\ldots,z_{\ell}')\in N_H(\boldsymbol{y})$ and $z_{r}'=z_{r}$ with $r\in [\ell]\setminus\{i\}$. Moreover, all of these $\boldsymbol{z}'$ corresponding to different $\boldsymbol{z}$ are distinct, which implies $d_H(\boldsymbol{y}) \ge \alpha |T_{i,j}|$. Therefore we have $\ell^2d_H(\boldsymbol{y})\ge \alpha |T|$, then the claim follows.

    Define a binary relation $\sim$ over $V(H)$ by setting $\boldsymbol{x}\sim \boldsymbol{y}$ if and only if there exist some $i,j\in[\ell]$ with $x_i=y_j$. Since $|V(H)| \leq n^\ell$, by the above claim and Lemma~\ref{lem:UpperBoundHomC2k} with $\beta=\ell^2/\alpha$, there exists a $k$-cycle $\boldsymbol{a}_1\boldsymbol{a}_2\cdots \boldsymbol{a}_k\boldsymbol{a}_1$ in $H$ with $\boldsymbol{a}_i\nsim \boldsymbol{a}_j$ for any different $i,j\in [k]$, which gives a copy of $T_{k,\ell}$ in $G$.   
\end{proof}

It remains to show that rich collection of paths and cycles with suitable parameters can be found in our host graphs. Combined with Proposition \ref{prop:normal grid}, the next lemma proves Theorem \ref{thm:normal grid}.

\begin{lemma}
    Let $t$ be a positive integer, let $n$ be sufficiently large compared to $t$ and let $G$ be an $n$-vertex graph with  $e(G)\geq 5t^{3/2}n^{3/2}$. Then $G$ has a non-empty $\alpha$-rich collection of paths of length $2t-2$ with $\alpha=t^2$.
\end{lemma}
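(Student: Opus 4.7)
The plan is to find the required $\alpha$-rich collection via path counting plus iterative pruning.

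First, let $\mathcal{P}_0$ denote the set of all labelled paths of length $2t-2$ in $G$. Applying Lemma~\ref{lem:path inequality} with $k=2t-2$ and $\ell=1$ gives $\hom(P_{2t-1},G)\geq n\bar d^{2t-2}$, where $\bar d = 2e(G)/n \geq 10 t^{3/2} n^{1/2}$. For $n$ sufficiently large in terms of $t$, walks with a repeated vertex comprise a lower-order term of this count (one may optionally first pass to a near-regular subgraph via Lemma~\ref{lem:JiangSeiverK-almost} for uniform control of walk counts, at the cost of only a constant factor in the edge count). Hence $|\mathcal{P}_0| \geq \tfrac{1}{2} n \bar d^{2t-2}$, an enormous collection for our parameters.

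Next, apply a greedy pruning: initialize $\mathcal{P} = \mathcal{P}_0$, and while some $P\in\mathcal{P}$ and internal position $i\in\{2,\ldots,2t-2\}$ admit fewer than $\alpha=t^2$ substitutions at position $i$ yielding a path in $\mathcal{P}$, remove $P$. By construction, the terminal $\mathcal{P}$ is $\alpha$-rich; the task is to show it is non-empty. I plan to do this via a double-counting/charging argument: each deletion of a path $P$ at position $i$ certifies that at least $r_i(P;\mathcal{P}_0) - \alpha + 1$ of $P$'s $i$-siblings in $\mathcal{P}_0$ had been deleted previously (where $r_i(P;\mathcal{P}_0)$ denotes the number of substitutions at position $i$ of $P$ remaining within $\mathcal{P}_0$). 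Summing these certificates over all deletions and comparing against the total number of substitution-pairs in $\mathcal{P}_0$ (bounded by counting ``path-with-hole'' configurations) yields an upper bound on total deletions that is strictly less than $|\mathcal{P}_0|$.

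The main obstacle is handling paths with small $r_i(P;\mathcal{P}_0)$, corresponding to an interior pair $(v_{i-1}, v_{i+1})$ of low codegree in $G$; such paths are immediately deletion-prone and can derail the charging. To sidestep this, I would pre-process $G$ by removing every edge $uv$ for which at least half the neighbors $w$ of $v$ (or, symmetrically, of $u$) satisfy $d_G(u,w)<\alpha+2t$. A routine codegree-sum calculation in the spirit of Lemma~\ref{lem:large codegree rare} shows such edges number only $O(t^{1/2}n^{3/2})$, so the cleaned subgraph retains $\Omega(t^{3/2}n^{3/2})$ edges. Restricting $\mathcal{P}_0$ to walks whose every consecutive extension uses a ``good'' neighbor (one whose codegree with the preceding vertex is at least $\alpha + 2t$) then produces paths with robust replacement counts exceeding $\alpha$ at every interior position, after which the iterative pruning removes at most a negligible fraction of paths and terminates with a non-empty $\alpha$-rich collection.
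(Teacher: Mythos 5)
Your opening steps (a Sidorenko-type lower bound on $|\mathcal{P}_0|$ after passing to an almost-regular subgraph, followed by iterative pruning) match the paper, but both ingredients you use to show the pruning does not exhaust $\mathcal{P}_0$ have genuine gaps. First, the charging argument is not sound: when a path $P$ is deleted because fewer than $\alpha$ of its $i$-siblings survive, the ``certificate'' of $r_i(P;\mathcal{P}_0)-\alpha+1$ earlier deletions is not a disjoint charge --- a single earlier-deleted sibling $Q$ can be certified by every one of its (up to $\Theta(n)$ per position) later-deleted siblings, so summing certificates gives no upper bound on the total number of deletions. The paper avoids this by deleting, in one step, \emph{all} paths over a deficient hole $F_i=P-a$; then each hole is processed at most once, each step removes fewer than $\alpha$ paths, and the total loss is at most $\alpha$ times the number of holes. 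Bounding the number of holes (i.e.\ copies of $P_{j}\cup P_{2t-2-j}$) against $\hom(P_{2t-1},G')$ via Lemma~\ref{lem:path inequality} together with Sidorenko's bound $\hom(P_{2t-1},G')\ge md^{2t-2}$ is the heart of the proof and is what yields the polynomial dependence on $t$; a crude bound by $n^2\Delta(G')^{2t-4}$ would lose a factor $K^{2t-4}$ and fail for the stated constant $5t^{3/2}$. This global count is entirely absent from your argument.

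Second, your cleaning step does not deliver the robustness you claim. After restricting to paths $x_0\cdots x_{2t-2}$ in which $d(x_{i-1},x_{i+1})\ge \alpha+2t$ for every interior $i$, a replacement $x_i'$ of $x_i$ yields a path of $G$, but for the new path to lie in the \emph{restricted} collection one also needs $d(x_{i-2},x_i')\ge\alpha+2t$ and $d(x_i',x_{i+2})\ge\alpha+2t$, and nothing in your cleaning controls how many of the $\ge\alpha+2t$ common neighbours of $x_{i-1}$ and $x_{i+1}$ satisfy these extra conditions. Since $\alpha$-richness requires replacements to land back in the same collection $\mathcal{P}$, the restricted collection may have essentially no valid replacements at some positions, and the subsequent pruning can then cascade and delete everything. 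So the local-cleaning route, as described, does not substitute for the paper's global homomorphism count.
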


\begin{proof}
    By Lemma \ref{lem:JiangSeiverK-almost}, $G$ has a $K$-almost regular subgraph $G'$ on with $m$ vertices and at least $2t^{3/2}m^{3/2}$ edges, where $K$ is some absolute constant and $m$ is sufficiently large compared to $t$. Let $d\geq 4t^{3/2}m^{1/2}$ be the average degree of $G'$. Let $\alpha=t^2$. Let $\mathcal{P}_0$ be the collection of paths of length $2t-2$ in $G'$. Since $G'$ is $K$-almost regular, it is easy to see that more than half of all homomorphisms from $P_{2t-1}$ to $G'$ are injective. Hence, we have $|\mathcal{P}_0|\geq \frac{1}{2}\hom(P_{2t-1},G')$.
    
    Define a sequence of collections of paths of length $2t-2$ in $G'$, $\mathcal{P}_0 \supseteq \mathcal{P}_1 \supseteq \cdots$. Having defined $\mathcal{P}_i$, if there exist some $P\in \mathcal{P}_i$ and an internal vertex $a\in P$ such that the number of $P'\in \mathcal{P}_i$ containing some $b\in P'$ with $P'-b = P-a$ is less than $\alpha$, then define $F_{i}:=P-a$ and let $\mathcal{P}_{i+1}$ be the collection obtained from $\mathcal{P}_i$ by removing all paths of length $2t-2$ containing $F_{i}$. The process will terminate if there is no such member $P$.

    Suppose that the process stops after $s$ steps, it suffices to show that $\mathcal{P}_{s}$ is non-empty. As for each $i\in \{0,1,\ldots,s-1\}$, all paths of length $2t-2$ in $\mathcal{P}_{i}$ containing $F_{i}$ were removed, all the $F_{i}$'s are distinct. Hence the number of possible steps $s$ is at most the number of different possibilities for $F_i$.

    \begin{claim}
        There are at most $\frac{1}{4t^2}\hom(P_{2t-1},G')$ possibilities for $F_i$.
    \end{claim}
\begin{poc}
 There are less than $2t$ choices for the position of the removed vertex, so it suffices to prove that for each $0\leq \ell\leq 2t-4$, the number of subgraphs in $G'$ isomorphic to $P_{\ell+1}\cup P_{2t-3-\ell}$ is at most $\frac{1}{8t^3}\hom(P_{2t-1},G')$. Clearly, the number of such subgraphs in $G'$ is at most $\hom(P_{\ell+1},G')\hom(P_{2t-3-\ell},G')$. By Lemma \ref{lem:path inequality}, we have
    \begin{align*}
        \hom(P_{\ell+1},G')\hom(P_{2t-3-\ell},G')
        &\leq m\left(\frac{\hom(P_{2t-1},G')}{m}\right)^{\ell/(2t-2)}m\left(\frac{\hom(P_{2t-1},G')}{m}\right)^{(2t-4-\ell)/(2t-2)} \\
        &=m^2\left(\frac{\hom(P_{2t-1},G')}{m}\right)^{(2t-4)/(2t-2)}=m^{\frac{t}{t-1}}\hom(P_{2t-1},G')^{(t-2)/(t-1)}
    \end{align*}
    By Sidorenko's property for paths~\cite{1959SidorenkoPath}, we have $\hom(P_{2t-1},G')\geq md^{2t-2}$, so $\hom(P_{2t-1},G')^{1/(t-1)}\geq m^{1/(t-1)}d^2\geq 16t^3m^{t/(t-1)}$. Hence, $m^{\frac{t}{t-1}}\hom(P_{2t-1},G')^{(t-2)/(t-1)}\leq \frac{1}{16t^3}\hom(P_{2t-1},G')$, completing the proof of the claim.   
\end{poc}
    By the claim and our earlier discussion, it follows that $s\leq \frac{1}{4t^2}\hom(P_{2t-1},G')$.
    Hence, we have 
    \begin{equation*}
        |\mathcal{P}_{s}| \ge |\mathcal{P}_0|-s\cdot \alpha \ge  \frac{1}{2}\hom(P_{2t-1},G')-  st^2 >0,
    \end{equation*}
     as desired.
\end{proof}

For Theorem~\ref{thm:grid}, by Proposition~\ref{prop:45Grid}, it suffices to find an $\alpha$-rich collection of $2\ell$-cycles with $\alpha\ge k\ell$. 
For Theorem~\ref{thm:torus}, by Proposition~\ref{prop:torus}, it suffices to find an $\alpha$-rich collection of $2\ell$-cycles with $\alpha\ge \ell^2\cdot 2^{20}(\frac{k}{2})^{3}(\ell\log{n})^{4}n^{2\ell/k}$. Thus, both theorems follow from the following lemma.

\begin{lemma}\label{lemma:alphaNice}
    Let $\ell\geq 2$ be an integer, let $n$ be sufficiently large compared to $\ell$ and let $G$ be a graph on $n$ vertices with $e(G)\ge Cn^{3/2}$, where $C\geq 1$. Then $G$ contains a non-empty $\alpha$-rich collection of $C_{2\ell}$, where $\alpha = \frac{c\cdot C^2}{2^{2\ell}K^{2\ell-2}}$, $c=\frac{1}{2}(2^{12}\ell)^{-\ell}$ and $K= 20\cdot 2^5$.
\end{lemma}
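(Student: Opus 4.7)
The plan is to mirror the cleaning strategy used in the preceding lemma (for paths), now applied to $2\ell$-cycles. First I would apply Lemma~\ref{lem:JiangSeiverK-almost} with $\varepsilon = 1/2$ to extract a $K$-almost regular subgraph $G'$ of $G$ on $m$ vertices with average degree $d \ge \frac{4C}{5} m^{1/2}$, where $K = 20 \cdot 2^5$. Since $m$ is polynomially large in $n$, the lower bound $d = \Omega(m^{1/2})$ together with $n$ being sufficiently large in terms of $\ell$ allows invoking Lemma~\ref{Supersaturation of cycles} on $G'$, which yields at least $2\ell(2^{12}\ell)^{-\ell} d^{2\ell}$ \emph{labelled} $2\ell$-cycles in $G'$ (each unlabelled cycle contributing $4\ell$ labellings).

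Next, let $\mathcal{C}_0$ be this collection of labelled $2\ell$-cycles, viewed as $2\ell$-tuples $(x_1, \ldots, x_{2\ell})$ of distinct vertices of $G'$ with every cyclic pair adjacent. Define $\mathcal{C}_0 \supseteq \mathcal{C}_1 \supseteq \cdots$ by the following deletion rule: at step $i$, if there exist $(x_1, \ldots, x_{2\ell}) \in \mathcal{C}_i$ and $j \in \{1, \ldots, 2\ell\}$ for which fewer than $\alpha$ replacements $x_j'$ give a cycle in $\mathcal{C}_i$, obtain $\mathcal{C}_{i+1}$ from $\mathcal{C}_i$ by removing every cycle that agrees with $(x_1, \ldots, x_{2\ell})$ at all positions except $j$. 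The terminal collection $\mathcal{C}_s$ is $\alpha$-rich by construction, so it remains to show $\mathcal{C}_s \neq \emptyset$. Each step is indexed by a distinct pair $(\text{gap}, j)$, where cyclically shifting the gap gives a labelled copy of $P_{2\ell-1}$ in $G'$; since $\Delta(G') \le Kd$, the number of such pairs is at most $2\ell \cdot m(Kd)^{2\ell-2}$. As each step removes at most $\alpha$ cycles,
\[
|\mathcal{C}_s| \ge 2\ell(2^{12}\ell)^{-\ell} d^{2\ell} - 2\ell \cdot m(Kd)^{2\ell-2} \cdot \alpha.
\]
Substituting $d^2/m \ge 16C^2/25$ together with the chosen $\alpha = \frac{cC^2}{2^{2\ell}K^{2\ell-2}}$ and $c = \frac{1}{2}(2^{12}\ell)^{-\ell}$ shows that the right-hand side is strictly positive.

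The argument is essentially bookkeeping once the cleaning rule is set up; the main obstacle is pure constant-chasing, namely ensuring that the factor $d^2/m = \Theta(C^2)$ comfortably absorbs the supersaturation constant and the maximum-degree factor $K^{2\ell-2}$ coming from the path count. The slack afforded by the $2^{2\ell}$ in the denominator of $\alpha$ makes this easy to verify. For $\ell = 2$, Lemma~\ref{Supersaturation of cycles} as stated would require $C$ larger than the hypothesised $C \ge 1$, but the same lower bound $|\mathcal{C}_0| = \Omega(d^4)$ follows from the standard convexity count $\hom(C_4,G') \ge m^2(d^2/m)^2 = d^4$ applied to $G'$, after which the cleaning step proceeds identically.
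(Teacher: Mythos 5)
Your proposal is correct and follows essentially the same route as the paper: regularize via Lemma~\ref{lem:JiangSeiverK-almost}, apply the supersaturation Lemma~\ref{Supersaturation of cycles}, then clean the cycle collection by repeatedly deleting all cycles through a deficient $(2\ell-1)$-vertex path and bound the number of deletion steps by the number of such paths, $O(m(Kd)^{2\ell-2})$. Your extra care about $\ell=2$ addresses a degree hypothesis of Lemma~\ref{Supersaturation of cycles} that the paper's own proof silently glosses over (though your convexity patch would still need to discount degenerate homomorphisms, which is only comfortable once $C$ exceeds an absolute constant --- harmless here since for small $C$ the target $\alpha$ is below $1$ and the conclusion is essentially vacuous).
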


\begin{proof}
    By Lemma~\ref{lem:JiangSeiverK-almost}, it suffices to prove the result for graphs $G$ which are $K$-almost regular and have $e(G) = \frac{2}{5}Cn^{3/2}$. Let $\mathcal{C}_{0}$ be the collection of all $2\ell$-cycles in $G$. By Lemma~\ref{Supersaturation of cycles}, we know $|\mathcal{C}_{0}| \ge c \cdot(\frac{4}{5}C n^{1/2})^{2\ell}$, where $c=\frac{1}{2}(2^{12}\ell)^{-\ell}$.

Define a sequence of collections of $2\ell$-cycles, $\mathcal{C}_0 \supseteq \mathcal{C}_1 \supseteq \cdots$ as follows. Having defined $\mathcal{C}_i$, if there exist some $C\in \mathcal{C}_i$ and a vertex $a\in C$ such that the number of $C'\in \mathcal{C}_i$ containing some $b\in C'$ with $C'-b = C-a$ is less than $\alpha$, then define $F_{i}:=C-a$ and let $\mathcal{C}_{i+1}$ be the collection obtained from $\mathcal{C}_i$ by removing all $2\ell$-cycles containing the $(2\ell-1)$-vertex path $F_{i}$. The process will terminate if there is no such member $C$.
    
    Suppose that the process stops after $t$ steps, it suffices to show that $\mathcal{C}_{t}$ is non-empty. As for each $i\in \{0,1,\ldots,t-1\}$, all cycles in $\mathcal{C}_{i}$ containing $F_{i}$ were removed, all of $F_{i}$'s are distinct. Hence the number of possible steps $t$ is at most the number of $P_{2\ell-1}$ in $G$, which implies $t\le n\cdot\Delta(G)^{2\ell-2}\leq n\cdot (KCn^{1/2})^{2\ell-2}$.
    Then we have 
    \begin{equation*}
        |\mathcal{C}_{t}| \ge |\mathcal{C}_0|-t\cdot \alpha \ge  c \cdot\left(\frac{4}{5} Cn^{1/2}\right)^{2\ell}-  t \alpha \geq \left(c\cdot \left(\frac{4C}{5}\right)^{2\ell} - \alpha \cdot K^{2\ell-2} C^{2\ell-2}\right)n^\ell >0,
    \end{equation*}
     as desired.
\end{proof}

\section{Honeycomb: Proof of Theorem~\ref{thm:tightHoneyComb}}\label{sec:honeycomb}
With the help of the following notion of a \emph{good} collection of paths, we can implement a similar shifting embedding strategy for honeycomb graphs. However, constructing such a good collection is substantially more involved due to the many possible degenerate cases. Instead of using supersaturation, we carry out a weighted count for paths.

\begin{defn}\label{def:alphagood}
    Let $\alpha>0$ and $k\in \mathbb{N}$. A collection $\mathcal{P}$ of paths $P_{k}$ is \emph{$\alpha$-good} if the following holds. For any $x_1x_2\cdots x_{k}\in \mathcal{P}$ and $2\leq i\leq k-2$, there are at least $\alpha$ pairwise disjoint edges $x'_ix'_{i+1}$ such that $x_1x_2\cdots x_{i-1}x'_ix'_{i+1}x_{i+2}\cdots x_{k}\in \mathcal{P}$.
\end{defn}

Similarly to Proposition~\ref{prop:normal grid}, we can show that the existence of a suitable $\alpha$-good collection of paths guarantees the existence of a large honeycomb subgraph.
\begin{prop}[Finding a honeycomb $H_{k,\ell}$ from a good collection of paths]\label{prop:Honeycomb}
    Let $\mathcal{C}$ be a non-empty $\alpha$-good collection of paths $P_{2k}$ in $G$.
    If $\alpha \ge  k\ell$, then $G$ contains a copy of $H_{k,\ell}$ as a subgraph. 
\end{prop}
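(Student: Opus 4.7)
The plan is to mirror the shifting strategy of Propositions~\ref{prop:normal grid} and~\ref{prop:45Grid}, now using consecutive-pair (edge) replacements in place of single-vertex shifts, matching the $\alpha$-good condition. The key structural input is a decomposition of $H_{k,\ell}$ into $\ell-1$ zigzag paths $Q^{(1)}, \ldots, Q^{(\ell-1)}$, where $Q^{(j)}$ runs from $v$ to $u$ through columns $j$ and $j+1$ by alternating horizontal edges and vertical matching edges row by row. Each $Q^{(j)}$ has exactly $2k$ vertices, and a direct inspection of the definition of $H_{k,\ell}$ shows that every edge of $H_{k,\ell}$ lies in some $Q^{(j)}$.

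The crucial combinatorial observation is how $Q^{(j)}$ and $Q^{(j+1)}$ overlap as ordered vertex sequences. Because they share column $j+1$, they agree at position~$1$ (the vertex $v$), at position~$2k$ (the vertex $u$), and at the $k-1$ non-identified positions of the shared column; they differ at the remaining $k-1$ positions, which correspond to the ``outer'' column. Tracing the zigzag carefully, these $k-1$ differing positions group into $(k-1)/2$ pairs of consecutive indices, all with the small index lying in $\{2, 4, \ldots, 2k-2\}$, so every such pair is an admissible replacement position for the $\alpha$-good definition.

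Given this, I would build the embedding greedily: start with any $\widetilde{P}^{(1)}\in \mathcal{C}$; inductively, given $\widetilde{P}^{(j)}\in \mathcal{C}$ embedding $Q^{(j)}$, apply the $(k-1)/2$ required pair replacements to $\widetilde{P}^{(j)}$ one by one, each time using the $\alpha$-good property to obtain a new path in $\mathcal{C}$, ending with a path $\widetilde{P}^{(j+1)}\in \mathcal{C}$ that embeds $Q^{(j+1)}$ and agrees with the previous embeddings on all shared vertices. At each replacement, the $\alpha$ pairwise disjoint candidate edges mean that each of the at most $v(H_{k,\ell}) \le k\ell$ vertices of the partial embedding blocks at most one candidate, so the hypothesis $\alpha \ge k\ell$ secures a candidate whose two vertices are both fresh. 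Taking $\widetilde{P}^{(1)} \cup \cdots \cup \widetilde{P}^{(\ell-1)}$ then yields the desired copy of $H_{k,\ell}$. The main bookkeeping hurdle is the structural claim in the second paragraph, namely verifying for both parities of $j$ that the differing positions really do fall into $(k-1)/2$ pairs of consecutive admissible indices; once that is pinned down, the rest is a short greedy loop.
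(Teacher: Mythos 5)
Your proposal is correct and follows essentially the same shifting-embedding scheme that the paper itself only indicates via Figure~\ref{k=10} and the analogy with Proposition~\ref{prop:normal grid}: decompose $H_{k,\ell}$ into the $\ell-1$ two-column zigzag paths $Q^{(1)},\dots,Q^{(\ell-1)}$ from $v$ to $u$ and morph each into the next by consecutive-pair replacements. The structural claim you defer does check out in both parities --- orienting every $Q^{(j)}$ from $v$ to $u$, the shared column-$(j+1)$ vertices (together with $v$ and $u$) occupy identical positions in $Q^{(j)}$ and $Q^{(j+1)}$, and the $k-1$ differing positions form the $(k-1)/2$ consecutive pairs $(2,3),(6,7),\dots$ or $(4,5),(8,9),\dots$ according to the parity of $j$, all with even lower index in $[2,2k-2]$ --- after which the greedy count $\alpha\ge k\ell\ge v(H_{k,\ell})$ goes through exactly as in Proposition~\ref{prop:45Grid}.
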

\begin{figure}[h]
    \centering
    \includegraphics[width=10cm]{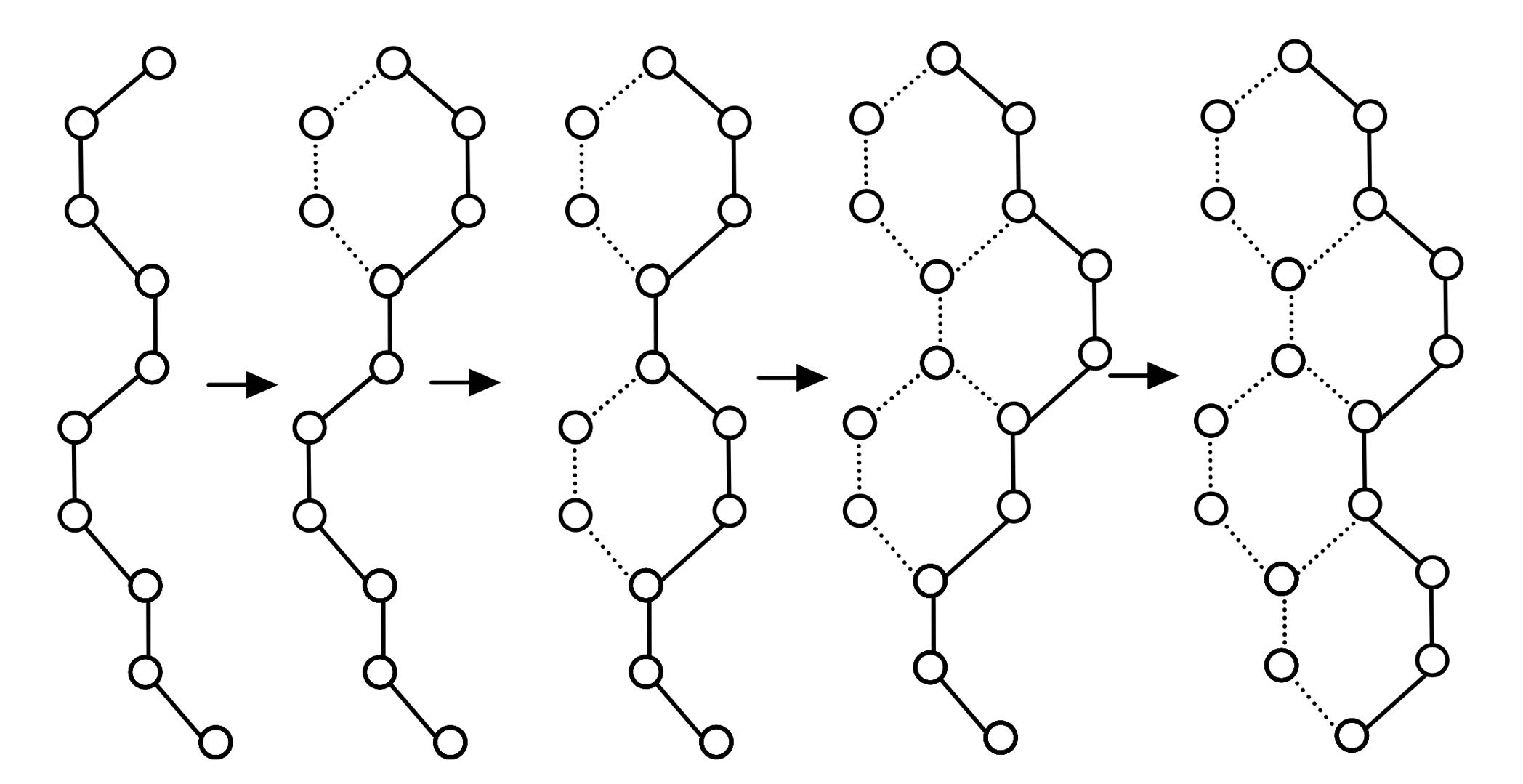}
    \caption{The process in Proposition~\ref{prop:Honeycomb} for $k=5$}
    \label{k=10}
\end{figure}

Instead of giving a formal proof of Proposition~\ref{prop:Honeycomb}, we refer the reader to Figure~\ref{k=10}, which depicts the embedding procedure. The process starts with an arbitrary member of an $\alpha$-good collection of paths $P_{2k}$, and then in each step we replace two (suitably chosen) consecutive vertices of the path in a way that the new path still belongs to the $\alpha$-good collection, and the new vertices are distinct from all previous vertices appearing in the process. By taking enough steps, the graph induced by the vertices appearing in the process will contain $H_{k,\ell}$.

By Proposition~\ref{prop:Honeycomb}, it suffices to find a non-empty $\alpha$-good collection of paths $P_{2k}$ with $\alpha \ge k\ell$. Note that if $\mathcal{C}$ is an $\alpha$-good collection of paths $P_{2k+1}$, then for any vertex $v$, the collection of those paths $x_1x_2\dots x_{2k}$ for which $x_1x_2\dots x_{2k}v\in \mathcal{C}$ is an $\alpha$-good collection of paths $P_{2k}$. Hence, it suffices to find a non-empty $\alpha$-good collection of paths $P_{2k+1}$ with $\alpha\geq k\ell$.

\begin{lemma} \label{lem:findgoodpaths}
    For any $k\in \mathbb{N}$ and positive real numbers $K$ and $\alpha$, there exists some $C=C(k,K,\alpha)$ such that the following holds. Let $G$ be an $n$-vertex $K$-almost-regular graph with $e(G)\geq Cn^{4/3}$. Then $G$ contains a non-empty $\alpha$-good collection of paths $P_{2k+1}$.
\end{lemma}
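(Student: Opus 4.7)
The plan is to adapt the shifting/peeling strategy of Lemma~\ref{lemma:alphaNice}. Let $\mathcal{P}_0$ be the collection of all injective copies of $P_{2k+1}$ in $G$. Since $G$ is $K$-almost-regular with $e(G)\ge Cn^{4/3}$, its average degree is $d=\Theta_C(n^{1/3})$, so Sidorenko's inequality together with almost-regularity gives $|\mathcal{P}_0|=\Omega_{k,K}(nd^{2k})$. Define a decreasing sequence $\mathcal{P}_0\supseteq\mathcal{P}_1\supseteq\cdots$ by iterating the following step: if some $P=x_1\cdots x_{2k+1}\in\mathcal{P}_j$ has an internal position $i\in\{2,\dots,2k-1\}$ at which the bipartite graph of valid edge-replacements within $\mathcal{P}_j$ has matching number less than $\alpha$, fix a signature encoding this bad event and remove from $\mathcal{P}_j$ every path sharing it. The limit $\mathcal{P}^*=\bigcap_j\mathcal{P}_j$ is $\alpha$-good by construction, so the task reduces to showing $\mathcal{P}^*\neq\emptyset$.

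The natural signature is the skeleton $F_i$ obtained from $P$ by removing $x_i,x_{i+1}$, namely the pair of sub-paths $x_1\cdots x_{i-1}$ and $x_{i+2}\cdots x_{2k+1}$. The total number of skeletons over all positions can be bounded in terms of $\hom(P_{2k+1},G)$ via Lemma~\ref{lem:path inequality}, matching the scale of $|\mathcal{P}_0|$. The delicate point is the number of paths that each bad signature removes: unlike Lemma~\ref{lemma:alphaNice}, where swapping a single vertex limits this count to $\alpha$, here we swap two vertices, so a bipartite graph with matching number less than $\alpha$ may still contain as many as $\alpha\cdot\Delta(G)=\Theta_{\alpha,K}(n^{1/3})$ edges (K\H{o}nig's theorem supplies a vertex cover of size $<\alpha$, each of whose vertices has degree at most $\Delta(G)$). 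Consequently a straightforward unweighted count loses a multiplicative factor of $n^{1/3}$ that no choice of $C$ can absorb.

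To repair this, I will use a weighted count, as announced in the paragraph preceding the lemma. Each path is assigned a weight reflecting the richness of its local neighborhoods at internal positions; a natural candidate is $w(P)=\prod_{i=2}^{2k-1}f(x_{i-1},x_{i+2})$, where $f(a,b)$ denotes the number of length-$3$ paths from $a$ to $b$ in $G$. The total weight $\sum_{P\in\mathcal{P}_0}w(P)$ is controlled from below by a Sidorenko-type count for the richer subgraph obtained from a path of length $2k$ by attaching a $6$-cycle gadget at every internal edge, the relevant homomorphism count being comparable to a suitable power of $d$. The K\H{o}nig vertex cover produced by a bad signature then lets us re-express the paths being removed in a form whose weighted contribution is smaller by a factor that cancels the stray $\Delta(G)$ overhead, so that the budget closes once $C=C(k,K,\alpha)$ is chosen sufficiently large; this yields $\mathcal{P}^*\neq\emptyset$. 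The main obstacle will be handling the various collisions between the replacement pair $(x_i',x_{i+1}')$ and the fixed vertices of the skeleton, the ``many possible degenerate cases'' flagged in the paragraph preceding the lemma, which contribute lower-order but combinatorially intricate terms to the weight estimate and are where most of the bookkeeping lives.
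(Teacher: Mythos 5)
You have correctly isolated the central obstacle: a replacement graph with matching number below $\alpha$ only yields (via K\H{o}nig) a vertex cover $S$ with $|S|<2\alpha$, and each cover vertex can still account for up to $\Delta(G)=\Theta(n^{1/3})$ replacement edges, so an unweighted deletion count overshoots by a factor of $n^{1/3}$ that no choice of $C$ absorbs. But your proposed repair does not close this gap. First, the weight $w(P)=\prod_i f(x_{i-1},x_{i+2})$ is an \emph{up}weighting: deleted paths then carry weight at least $1$ (and possibly as large as $\Delta(G)^{O(k)}$), so the deleted mass per bad signature only grows, and the claim that the K\H{o}nig cover ``lets us re-express the paths being removed in a form whose weighted contribution is smaller by a factor that cancels the stray $\Delta(G)$ overhead'' is asserted without any mechanism --- nothing in the cover structure interacts with $f$ to produce a cancellation. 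Even with the reciprocal weighting $\prod_i 1/f(x_{i-1},x_{i+2})$, the natural bound (at most $f(x_{i-1},x_{i+2})$ replacement edges, each of weight $1/f(x_{i-1},x_{i+2})$ at that position) discards the matching-number hypothesis entirely, and neither the resulting upper bound on deleted weight nor the advertised ``Sidorenko-type'' lower bound on the total weight of the gadgeted path is established; Lemma~\ref{lem:path inequality} does not give the latter.

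The idea you are missing is a dichotomy on codegrees, which is how the paper resolves exactly this issue. If at most $nd^2/L$ cherries $uvw$ have $d(u,w)>C$, one first restricts $\cP_0$ to paths with $d(x_j,x_{j+2})\le C$ for all $j$ (this costs only a small fraction of the paths); then for any skeleton, a cover vertex $u$ placed at the left slot of the replaced edge admits at most $d(u,x_{i+2})\le C$ partners, and one at the right slot admits at most $d(x_{i-1},u)\le C$ partners, so a bad signature deletes at most $4\alpha C<C^2$ paths and the \emph{unweighted} count closes --- no weighting is needed at all. Otherwise some vertex $v$ has more than $d^2/L$ pairs $u,w\in N(v)$ with $d(u,w)>C$, and one works with paths whose even-indexed vertices lie in $N(v)$ and satisfy $d(x_{2i-2},x_{2i})>C$, weighted by $1/\prod_i d(x_{2i-2},x_{2i})$; there the \emph{largeness} of these codegrees is what makes each deleted batch cheap. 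Without such a case split (or another concrete way to exploit codegree structure), your single global weighting has no identified reason to work, and the ``degenerate cases'' you flag as the main remaining bookkeeping are not where the real difficulty lies.
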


\begin{proof}
Let $d=2e(G)/n\geq Cn^{1/3}$.
Let $L$ be a constant that is sufficiently large compared to $k$, $K$ and $\alpha$, and let $C$ be sufficiently large compared to $L$. We consider two cases.

\medskip

\noindent\textbf{Case 1.} There are at most $nd^2/L$ paths $uvw$ in $G$ such that $d(u,w)>C$.

Note that the number of paths of length $2k$ in $G$ is at least $\frac{1}{2}n\delta(G)^{2k}\geq \frac{1}{2K^{2k}}nd^{2k}$. On the other hand, the number of paths $x_0x_1\cdots x_{2k}$ such that there exists some $0\leq i\leq 2k-2$ with $d(x_i,x_{i+2})>C$ is at most $2kK^{2k}nd^{2k}/L$. Indeed, there are at most $2k$ ways to choose $i$, there are at most $nd^2/L$ ways to choose $x_ix_{i+1}x_{i+2}$ and there are at most $\Delta(G)^{2k-2}\leq (Kd)^{2k-2}$ ways to extend this to a path of length $2k$. Using the fact that $L$ is sufficiently large compared to $k$ and $K$, it follows that there are at least $\frac{1}{4K^{2k}}nd^{2k}$ paths $x_0x_1\cdots x_{2k}$ in $G$ with $d(x_i,x_{i+2})\leq C$ for all $0\leq i\leq 2k-2$. Let $\cP_0$ be the collection of these paths. 

Now for $j=0,1,\dots$, if there exist some $0\leq i\leq 2k-3$ and some $x_0x_1\cdots x_{2k}\in \cP_j$ such that there are at most $C^2$ edges $x'_{i+1}x'_{i+2}$ with $x_0x_1\cdots x_ix'_{i+1}x'_{i+2}x_{i+3}\cdots x_{2k}\in \cP_j$, then take one such choice of $i$ and $x_0x_1\cdots x_{2k}$, and let $\cP_{j+1}$ be the collection obtained from $\cP_j$ by discarding all members of $\cP_j$ of the form $x_0x_1\cdots x_ix'_{i+1}x'_{i+2}x_{i+3}\cdots x_{2k}$. If no such $i$ and $x_0x_1\cdots x_{2k}\in \cP_j$ exist, then terminate the process. Let $\cP=\cP_t$ be the collection of paths when the process terminates. 

We claim that $\cP$ is non-empty. Indeed, we must have $t\leq 2k\cdot n^2\Delta(G)^{2k-3}$ because there are at most $2k$ choices for $i$ in the above step, there are at most $n^2\Delta(G)^{2k-3}$ choices for the vertices $x_0,x_1,\ldots,x_i,x_{i+3},x_{i+4},\ldots,x_{2k}$, and any pair consisting of $i$ and $(x_0,x_1,\ldots,x_i,x_{i+3},x_{i+4},\ldots,x_{2k})$ features at most once in the above process. Hence, indeed, we have $t\leq 2kn^2\Delta(G)^{2k-3}$. Furthermore, in each step, at most $C^2$ paths are deleted, so we have $|\cP|\geq |\cP_0|-tC^2\geq \frac{1}{4K^{2k}}nd^{2k}-2kn^2\Delta(G)^{2k-3}C^2$, which is greater than $0$ since $d\geq Cn^{1/3}$ and $C$ is sufficiently large compared to $k$ and $K$. 

We shall now prove that $\cP$ is $\alpha$-good, which completes the proof in Case 1. Assuming otherwise, there exist some $i$ and some $x_0\cdots x_{2k}\in \cP$ such that there are fewer than $\alpha$ pairwise disjoint edges $x'_{i+1}x'_{i+2}$ satisfying $x_0\cdots x_ix'_{i+1}x'_{i+2}x_{i+3}\cdots x_{2k}\in \cP$. This means that there is a collection of $s\leq \alpha$ edges $e_1,\ldots,e_s$ in $G$ such that whenever $x_0\cdots x_ix'_{i+1}x'_{i+2}x_{i+3}\cdots x_{2k}\in \cP$, we have that $x'_{i+1}x'_{i+2}$ intersects some $e_j$. Hence, there is a set $S$ of vertices of size at most $2s$ such that whenever $x_0\dots x_ix'_{i+1}x'_{i+2}x_{i+3}\cdots x_{2k}\in \cP$, we have $x'_{i+1}\in S$ or $x'_{i+2}\in S$. But note that if $x_0\cdots x_ix'_{i+1}x'_{i+2}x_{i+3}\cdots x_{2k}\in \cP$, then $d(x_i,x'_{i+2})\leq C$ and $d(x'_{i+1},x_{i+3})\leq C$. This means that there are at most $2\cdot 2s\cdot C$ edges $x'_{i+1}x'_{i+2}$ such that $x_0\cdots x_ix'_{i+1}x'_{i+2}x_{i+3}\cdots x_{2k}\in \cP$. Since $4sC<C^2$, this is a contradiction.

\medskip

\noindent\textbf{Case 2.} There are more than $nd^2/L$ paths $uvw$ in $G$ such that $d(u,w)>C$.

Choose $v\in V(G)$ such that there are more than $d^2/L$ paths of the form $uvw$ such that $d(u,w)>C$. Let $\cQ$ be the collection of paths $x_0x_1\cdots x_{2k}$ of length $2k$ in $G-v$ such that $x_{2i}\in N(v)$ for each $0\leq i\leq k$ and $d(x_{2i-2},x_{2i})>C$ for each $1\leq i\leq k$. Define the weight of such a path $P\in \cQ$ to be $w(P)=1/\prod_{i=1}^k d(x_{2i-2},x_{2i})$.

\medskip

\begin{claim} We have $$\sum_{P\in \cQ} w(P)\geq \Omega_{k,K,L}(d^{k+1}).$$
\end{claim}

\begin{poc} Define an auxiliary graph $H$ with vertex set $N(v)$ in which vertices $u$ and $w$ form an edge if $d(u,w)>C$. By the assumption that there are more than $d^2/L$ pairs of vertices $u,w\in N(v)$ such that $d(u,w)>C$, we have $e(H)>d^2/L$.
Since $|N(v)|\leq Kd$, $H$ has average degree at least $2d/(KL)$. By Lemma~\ref{lem:SubgraphLargeDeg}, $H$ has a subgraph $H'$ with minimum degree at least $d/(2KL)$. Since $d\geq Cn^{1/3}\geq C$ and $C$ is sufficiently large compared to $K$, $L$ and $k$, it follows (by a greedy embedding procedure in $H'$) that there are at least $(d/(4KL))^{k+1}=\Omega_{k,K,L}(d^{k+1})$ copies of $P_{k+1}$ in the graph $H$. Each such copy corresponds to a $(k+1)$-tuple $(x_0,x_2,x_4,\dots,x_{2k})$ consisting of distinct vertices in $N(v)$ such that $d(x_{2i-2},x_{2i})>C$ holds for all $1\leq i\leq k$. The number of ways to extend such a tuple to a member of $\cQ$ is at least $\prod_{i=1}^k (d(x_{2i-2},x_{2i})/2)$ and any such member of $\cQ$ has weight $1/\prod_{i=1}^k d(x_{2i-2},x_{2i})$, completing the proof of the claim. 
\end{poc}

Let $\cP_0=\cQ$. Let us define a sequence $\cP_0\supset \cP_1\supset \cdots$ of collections of paths as follows.

\begin{itemize}
    \item Having defined $\cP_j$, if there exist some $1\leq i\leq k$ and $x_0x_1\cdots x_{2k}\in \cP_j$ such that the number of vertices $x'_{2i-1}$ with $x_0x_1\cdots x_{2i-2}x'_{2i-1}x_{2i}\cdots x_{2k}\in \cP_j$ is at most $2\alpha$, then choose such an $i$ and $x_0x_1\cdots x_{2k}\in \cP_j$, and let $\cP_{j+1}$ be obtained from $\cP_j$ by deleting all elements of the form $x_0x_1\cdots x_{2i-2}x'_{2i-1}x_{2i}\cdots x_{2k}$. Call this deletion of type 1.

    \item Otherwise, if there exist some  $0\leq i\leq k-2$ and $x_0x_1\cdots x_{2k}\in \cP_j$ such that there are at most $2\alpha$ vertices $x'_{2i+2}$ for which there exists a vertex $x'_{2i+1}$ such that $x_0x_1\cdots x_{2i}x'_{2i+1}x'_{2i+2}x_{2i+3}\cdots x_{2k}\in \cP_j$, then choose such an $i$ and $x_0x_1\cdots x_{2k}\in \cP_j$, and let $\cP_{j+1}$ be obtained from $\cP_j$ by deleting all paths of the form $x_0x_1\dots x_{2i}x'_{2i+1}x'_{2i+2}x_{2i+3}\cdots x_{2k}$. Call this deletion of type 2.

    \item Otherwise, if there exist some  $1\leq i\leq k-1$ and $x_0x_1\cdots x_{2k}\in \cP_j$ such that there are at most $2\alpha$ vertices $x'_{2i}$ for which there exists a vertex $x'_{2i+1}$ such that $x_0x_1\cdots x_{2i-1}x'_{2i}x'_{2i+1}x_{2i+2}\cdots x_{2k}\in \cP_j$, then choose such an $i$ and $x_0x_1\cdots x_{2k}\in \cP_j$, and let $\cP_{j+1}$ be obtained from $\cP_j$ by deleting all paths of the form $x_0x_1\cdots x_{2i-1}x'_{2i}x'_{2i+1}x_{2i+2}\cdots x_{2k}$. Call this deletion of type 3.

    \item Else, terminate the process.
\end{itemize}

Let $\cP$ be the collection of paths when the process terminates. Observe that $\cP$ is $\alpha$-good. Indeed, we just need to show that whenever $0\leq h\leq 2k-3$ and $x_0x_1\cdots x_{2k}\in \cP$, then there are at least $\alpha$ pairwise disjoint edges $x'_{h+1}x'_{h+2}$ such that $x_0x_1\cdots x_h x'_{h+1}x'_{h+2} x_{h+3}\cdots x_{2k}\in \cP$. Let us assume that $h=2i$ is even (the odd case is basically identical). Since the process terminated with $\cP$, there are at least $2\alpha$ vertices $x'_{2i+2}$ for which there exists a vertex $x''_{2i+1}$ such that $x_0x_1\cdots x_{2i}x''_{2i+1}x'_{2i+2}x_{2i+3}\cdots x_{2k}\in \cP$. However, (again since the process terminated with $\cP$) given such a vertex $x'_{2i+2}$, there must exist at least $2\alpha$ vertices $x'_{2i+1}$ satisfying $x_0x_1\cdots x_{2i}x'_{2i+1}x'_{2i+2}x_{2i+3}\cdots x_{2k}\in \cP$. This clearly implies that there are at least $\alpha$ pairwise disjoint edges $x'_{2i+1}x'_{2i+2}$ such that $x_0x_1\cdots x_{2i}x'_{2i+1}x'_{2i+2}x_{2i+3}\cdots x_{2k}\in \cP$.

It remains to prove that $\cP$ is nonempty. To show this, we will argue that the total weight of paths deleted by the process is smaller than the total weight of paths in $\cQ=\cP_0$. Let us first bound the total weight of paths deleted by deletions of type 1. In every step when we delete the paths of the form $x_0x_1\cdots x_{2i-2}x'_{2i-1}x_{2i}\cdots x_{2k}$, we delete at most $2\alpha$ paths and each has weight $1/\prod_{h=1}^k d(x_{2h-2},x_{2h})$. Any pair consisting of $i$ and $(x_0,x_1,\ldots,x_{2i-2},x_{2i},\ldots, x_{2k})$ features at most once, so the total weight of paths that get deleted by type 1 deletions is at most
\begin{align*}
    \sum_{i, x_0,x_1\ldots,x_{2i-2},x_{2i},\ldots, x_{2k}} 2\alpha/\prod_{h=1}^k d(x_{2h-2},x_{2h})
    &\leq \sum_{i,x_0,x_2,\ldots,x_{2k}} \prod_{h\neq i} d(x_{2h-2},x_{2h})\cdot 2\alpha/\prod_{h=1}^k d(x_{2h-2},x_{2h}) \\
    &= \sum_{i,x_0,x_2,\ldots,x_{2k}} 2\alpha/d(x_{2i-2},x_{2i}) 
    \leq k\cdot (Kd)^{k+1} 2\alpha/C.
\end{align*}
By the above claim and since $C$ is sufficiently large compared to $k$, $K$, $\alpha$ and $L$, this is less than a third of the total weight in $\cQ$.

Let us now bound the total weight of paths removed by type 2 deletions. In every step when we delete the paths of the form $x_0x_1\cdots x_{2i}x'_{2i+1}x'_{2i+2}x_{2i+3}\cdots x_{2k}$, the total weight removed is
\begin{align*}
    &\sum_{x'_{2i+1},x'_{2i+2}} \left(d(x_{2i},x'_{2i+2})d(x'_{2i+2},x_{2i+4})\prod_{h\neq i,i+1} d(x_{2h},x_{2h+2})\right)^{-1} \\
    &\leq \sum_{x'_{2i+2}} \left(d(x'_{2i+2},x_{2i+4})\prod_{h\neq i,i+1} d(x_{2h},x_{2h+2})\right)^{-1} \\
    &\leq \sum_{x'_{2i+2}} \left(C\prod_{h\neq i,i+1} d(x_{2h},x_{2h+2})\right)^{-1} \leq 2\alpha \left(C\prod_{h\neq i,i+1} d(x_{2h},x_{2h+2})\right)^{-1},
\end{align*}
where the last inequality holds since by the definition of the deletion process there are at most $2\alpha$ choices for $x'_{2i+2}$.
Hence, the total weight removed in all type 2 deletions combined is at most
\begin{align*}
    &\sum_{i,x_0,x_1,\ldots,x_{2i},x_{2i+3},\dots,x_{2k}} 2\alpha \left(C\prod_{h\neq i,i+1} d(x_{2h},x_{2h+2})\right)^{-1} \\
    &\leq \sum_{i,x_0,x_2,x_4,\dots,x_{2i},x_{2i+3},x_{2i+4},x_{2i+6},x_{2i+8},\dots,x_{2k}} 2\alpha/C \leq \sum_i (Kd)^{k+1} 2\alpha/C \leq k(Kd)^{k+1} 2\alpha/C,
\end{align*}
where the penultimate inequality follows from the fact that $x_0,x_2,\ldots,x_{2i},x_{2i+4},x_{2i+6},\ldots,x_{2k}$ are neighbours of $v$ and $x_{2i+3}$ is a neighbour of $x_{2i+4}$. Hence, the total weight removed by type 2 deletions is less than a third of the total weight in $\cQ$. An almost identical argument shows that the total weight removed by type 3 deletions is also less than a third of the total weight in $\cQ$, so we can conclude that the total weight in $\cP$ is positive, therefore $\cP$ is nonempty.
\end{proof}

\section{The prisms: Proof of Theorem~\ref{thm:Main1} }\label{sec:prisms}
\subsection{A short proof of $\ex(n,C_{2\ell}^{\square})=O(n^{3/2})$ for $\ell\ge 7$}\label{shorter proof}

Since the proof of Theorem \ref{thm:Main1} is quite long, we first give a short proof of the slightly weaker statement that $\ex(n,C_{2\ell}^{\square})=O(n^{3/2})$ for $\ell\ge 7$.

By Lemma~\ref{lem:JiangSeiverK-almost}, we can assume that the host graph $G$ is $K$-almost regular for some absolute constant $K$ and has average degree $d=cn^{1/2}$, where $c$ is at least a large constant. Moreover, using that every graph has a bipartite subgraph containing at least half of its edges and Lemma \ref{lem:SubgraphLargeDeg}, we may assume that in addition $G$ is bipartite. Let $T$ be the constant $C_0$ from Lemma \ref{lem:large codegree rare}. Call a copy of $4$-cycle $xyzw$ \emph{thin} if both of the codegrees $d(x,z)$ and $d(y,w)$ are at most $Td^{1/2}$ and call it \emph{thick} otherwise.
Lemma~\ref{Supersaturation of cycles} shows that $G$ contains at least $c_{1}d^{4}$ copies of $C_{4}$, where $c_{1}$ is an absolute constant.

Suppose first that at least $\frac{c_{1}d^{4}}{2}$ many $4$-cycles in $G$ are thin. We then define an auxiliary graph $\mathcal{G}$ as follows. Let $V(\mathcal{G}):=\{(x,y):x,y\in V(G),xy\in E(G)\}$ and let two vertices $\boldsymbol{s}=(s_{1},s_{2})$ and $\boldsymbol{t}=(t_{1},t_{2})$ be adjacent in $\mathcal{G}$ if $s_{1}s_{2}t_{1}t_{2}s_{1}$ is a thin $C_{4}$ in $G$. Note that if there exists a $2\ell$-cycle $(\boldsymbol{w}_{1},\boldsymbol{w}_{2},\ldots,\boldsymbol{w}_{2\ell})$ with $\boldsymbol{w}_{i}=(w_{i}^{(1)},w_{i}^{(2)})$ in this auxiliary graph $\mathcal{G}$ such that $\boldsymbol{w}_{i}\cap\boldsymbol{w}_{j}=\emptyset$ for any $i\neq j$, then there is a copy of $C_{2\ell}^{\square}$ in $G$. 

By the definition of the auxiliary graph $\mathcal{G}$, we can see that $v(\mathcal{G})=m:=2e(G)=nd$ and $e(\mathcal{G})\geq 4\cdot\frac{c_{1}d^{4}}{2}=2c_{1}c^{8/3}m^{4/3}$. By Lemma~\ref{lem:SubgraphLargeDeg}, there is a subgraph $\mathcal{G}'$ with $e(\mathcal{G}')\ge c_{1}c^{8/3}m^{4/3}$ and $\delta(\mathcal{G}')\ge c_{1}c^{8/3}m^{1/3}$.

\begin{claim}\label{claim:conflict777}
    For any vertex $u\in V(G)$ and any $\boldsymbol{s}=(s_{1},s_{2})\in V(\mathcal{G}')$, the vertex $\boldsymbol{s}$ is adjacent to at most $Cd^{-1/2}d_{\mathcal{G}'}(\boldsymbol{s})$ vertices in $\mathcal{G}'$ which contain element $u$, where $C=\frac{2T}{c_{1}c^{2}}$.
\end{claim}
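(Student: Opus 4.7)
The plan is to establish the bound in two steps: first control, directly in the full auxiliary graph $\mathcal{G}$, the number of neighbors $\boldsymbol{t}=(t_{1},t_{2})$ of $\boldsymbol{s}=(s_{1},s_{2})$ that contain $u$ as one of their coordinates by an absolute multiple of $d^{1/2}$; then convert this absolute bound into the claimed fraction of $d_{\mathcal{G}'}(\boldsymbol{s})$ using the minimum-degree guarantee on $\mathcal{G}'$.

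For the first step, I would split into the two symmetric cases $t_{1}=u$ and $t_{2}=u$. In the case $t_{1}=u$, the assumption that $\boldsymbol{s}\boldsymbol{t}$ is an edge of $\mathcal{G}$ means precisely that $s_{1}s_{2}u\,t_{2}s_{1}$ is a thin 4-cycle in $G$. In particular $t_{2}$ is a common neighbor of $s_{1}$ and $u$, so there are at most $d(s_{1},u)$ admissible choices; moreover, the thinness condition forces $d(s_{1},u)\le Td^{1/2}$, as otherwise no such 4-cycle is counted in $E(\mathcal{G})$ at all. Hence at most $Td^{1/2}$ neighbors have $t_{1}=u$. The case $t_{2}=u$ is identical, with $d(s_{2},u)\le Td^{1/2}$ playing the role of the codegree constraint and $t_{1}$ being a common neighbor of $s_{2}$ and $u$. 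Combining, at most $2Td^{1/2}$ neighbors of $\boldsymbol{s}$ in $\mathcal{G}$, and therefore also in the subgraph $\mathcal{G}'$, contain $u$.

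For the second step, I would plug in the values. Since $d=cn^{1/2}$, we have $n=d^{2}/c^{2}$ and $m=nd=d^{3}/c^{2}$, so the lower bound on the minimum degree of $\mathcal{G}'$ reads
\[
\delta(\mathcal{G}')\ge c_{1}c^{8/3}m^{1/3}=c_{1}c^{8/3}\cdot d/c^{2/3}=c_{1}c^{2}d.
\]
Consequently
\[
2Td^{1/2}=\frac{2T}{c_{1}c^{2}}\cdot d^{-1/2}\cdot c_{1}c^{2}d\le\frac{2T}{c_{1}c^{2}}\,d^{-1/2}\,d_{\mathcal{G}'}(\boldsymbol{s})=C\,d^{-1/2}\,d_{\mathcal{G}'}(\boldsymbol{s}),
\]
which is the claimed bound.

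There is essentially no obstacle here: the key observation is that the thin condition built into the definition of $E(\mathcal{G})$ already caps the relevant codegree $d(s_{i},u)$ by $Td^{1/2}$, so the combinatorial count is immediate and the minimum-degree guarantee on $\mathcal{G}'$ does the rest. The substantive content of the argument lies upstream (in defining $\mathcal{G}$ via the thin 4-cycles and in passing to $\mathcal{G}'$ via Lemma~\ref{lem:SubgraphLargeDeg}); the claim itself is the bookkeeping required so that the symmetric binary relation ``$\boldsymbol{s}\sim\boldsymbol{t}$ iff $\boldsymbol{s}$ and $\boldsymbol{t}$ share a coordinate'' satisfies the hypothesis of Lemma~\ref{lem:UpperBoundHomC2k}, enabling us to extract a conflict-free homomorphic $2\ell$-cycle in $\mathcal{G}'$ and hence a $C_{2\ell}^{\square}$ in $G$.
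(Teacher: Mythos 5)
Your proposal is correct and follows essentially the same route as the paper: the thinness condition built into $E(\mathcal{G})$ caps the relevant codegrees $d(s_1,u)$ and $d(s_2,u)$ at $Td^{1/2}$, giving at most $2Td^{1/2}$ neighbors of $\boldsymbol{s}$ containing $u$, and the minimum-degree bound $\delta(\mathcal{G}')\ge c_1c^{8/3}m^{1/3}=c_1c^2d$ converts this into the claimed fraction $Cd^{-1/2}d_{\mathcal{G}'}(\boldsymbol{s})$. Your write-up merely makes explicit the case split ($t_1=u$ versus $t_2=u$) that the paper's one-sentence proof leaves implicit.
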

\begin{poc}
Since thin $4$-cycles $xyzw$ in $G$ satisfy $d(x,z)\leqslant Td^{1/2}$ and $d(y,w)\leqslant Td^{1/2}$, and $d_{\mathcal{G}'}(\mathbf{s})\ge c_{1}c^{8/3}m^{1/3}=c_1c^3n^{1/2}$, we see that each vertex $\mathbf{s}$ in $\mathcal{G}'$ has at most $2Td^{1/2}\le Cd^{-1/2}d_{\mathcal{G}'}(\mathbf{s})$ neighbors which contain some fixed element $u$.
\end{poc}

As $Cd^{-1/2}<(2^{20}\ell^{3}(\log{m})^{4}m^{1/\ell})^{-1}$ holds when $\ell\ge 7$, by Lemma~\ref{lem:UpperBoundHomC2k} (with $\alpha=Cd^{-1/2}$ and with $\boldsymbol{s}\sim \boldsymbol{t}$ if and only if $\boldsymbol{s}\cap \boldsymbol{t}\neq \emptyset$), we obtain a copy of $C_{2\ell}$ in $\mathcal{G}$ in which all pairs of vertices are disjoint. This corresponds to a copy of
$C_{2\ell}^{\square}$ in $G$.

It remains to consider the case that there are at least $\frac{c_{1}d^{4}}{2}$ many thick $4$-cycles. Without loss of generality, we can assume that there are at least $\frac{c_{1}d^{4}}{4}$ many thick $4$-cycles $xyzw$ satisfying $d(y,w)\geqslant Td^{1/2}$. By the pigeonhole principle, there is some edge, say $xy\in E(G)$, which can be extended to at least $\frac{c_{1}d^{4}}{4e(G)}=\frac{c_{1}c^{3}n^{1/2}}{8}$ such thick $4$-cycles. Then by Lemma~\ref{lem:large codegree rare}, $G$ contains a copy of $C_{2\ell}^{\square}$.

\begin{rmk}
Setting $d=Cn^{8/13}(\log{n})^{24/13}$ in the above argument with a sufficiently large constant $C$ shows that $\textup{ex}(n,C_{6}^{\square})=O(n^{21/13}(\log{n})^{24/13})$.
\end{rmk}

In the next few sections, we provide the proof of Theorem~\ref{thm:Main1}.

\subsection{Building nice copies of $P_{\ell+1}^{\square}$}
In this section, we fix an integer $\ell\geq 2$.
A key ingredient in the proof of Theorem~\ref{thm:Main1} is a weighted count of certain copies of the Cartesian product $P_{\ell+1}^{\square}=P_{\ell+1}\square K_2$.
The following definition will be used in the proof.

\begin{defn}
    Let us call an $n$-vertex graph $H$ with average degree $d$ \emph{clean} if for any $uv\in E(H)$, $u$ has at least $d/16$ neighbours $w$ in $H$ such that $d_{H}(v,w)\geq \frac{d^2}{128n}$. 
\end{defn}

We will use the following lemma from~\cite{2022TuranGrid}.

\begin{lemma}[\cite{2022TuranGrid}] \label{lem:cleaned subgraph}
    Let $G$ be an $n$-vertex graph with $\alpha n^{3/2}$ edges. Then $G$ has a subgraph $H$ on the same vertex set such that $e(H)\geq \frac{1}{2}\alpha n^{3/2}$ and for any $uv\in E(H)$, $u$ has at least $\frac{1}{8}\alpha n^{1/2}$ neighbours $w$ in $H$ with $d_{H}(v,w)\geq \alpha^2/32$.
\end{lemma}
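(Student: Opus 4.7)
My plan is to construct $H$ by an iterative pruning procedure. Initialize $H\gets G$, and while there exists an edge $uv\in E(H)$ such that $u$ has fewer than $\tfrac{1}{8}\alpha n^{1/2}$ neighbors $w\in N_H(u)$ with $d_H(v,w)\geq \alpha^2/32$, delete the edge $uv$ from $H$. When the process terminates the desired property holds at every remaining edge (considering both endpoints, since both orderings are checked during the process), so the entire task reduces to bounding the total number of deletions by $\tfrac{1}{2}\alpha n^{3/2}$.

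The heart of the argument is a potential-function analysis based on counting length-$2$ paths with high-codegree endpoints. Introduce
\[
T(H):=\bigl|\{(v,u,w)\in V(H)^3:uv,uw\in E(H),\ v\ne w,\ d_H(v,w)\geq \alpha^2/32\}\bigr|.
\]
I would first verify $T(G)\geq \alpha^2 n^2$: by convexity, the number of ordered length-$2$ paths in $G$ is $\sum_u d(u)(d(u)-1)\geq 4\alpha^2 n^2-O(\alpha n^{3/2})$, while paths with low-codegree endpoints contribute at most $\sum_{d_G(v,w)<\alpha^2/32}d_G(v,w)\leq n^2\cdot \alpha^2/32$, so the bound follows for $n$ large.

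Each deletion decreases $T$ from two distinct sources. A \emph{direct} loss counts paths literally traversing the deleted edge $uv$ (of the forms $(v,u,w)$, $(w,u,v)$, $(u,v,w')$, or $(w',v,u)$); this totals $2(f_H(u,v)+f_H(v,u))$ where $f_H(x,y):=|\{z\in N_H(x):d_H(y,z)\geq \alpha^2/32\}|$. An \emph{indirect} loss counts paths $(v,x,w)$ with $w\in N_H(u)\setminus\{v\}$ (or with $u,v$ swapped) whose codegree $d_H(v,w)$ drops across the threshold $\alpha^2/32$ due to the deletion. Crucially, each unordered pair $\{v,w\}$ can cross the threshold at most once over the \emph{entire} process, and at that moment contributes fewer than $\alpha^2/16$ ordered paths to $T$; summing over pairs, the total indirect loss across the whole process is at most $n^2\cdot \alpha^2/16$, which is strictly less than $T(G)/2$ once constants are fixed.

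The main obstacle is controlling the direct loss on the ``opposite side'': the deletion rule only guarantees $f_H(u,v)<\tfrac{1}{8}\alpha n^{1/2}$, leaving the symmetric quantity $f_H(v,u)$ potentially as large as $d_H(v)$. I would handle this by a preliminary application of Lemma~\ref{lem:JiangSeiverK-almost} to pass to an almost-regular subgraph, forcing $\Delta=O(\alpha n^{1/2})$, hence $f_H(v,u)=O(\alpha n^{1/2})$ uniformly. Each deletion then incurs direct loss at most $O(\alpha n^{1/2})$, and since the total drop in $T$ is bounded by $T(G)=O(\alpha^2n^2)$, the number of deletions is at most $O(\alpha n^{3/2})$; a careful tracking of the constants (and, if necessary, a mild strengthening of the intermediate thresholds) reduces the bound to $\tfrac{1}{2}\alpha n^{3/2}$, yielding $e(H)\geq \tfrac{1}{2}\alpha n^{3/2}$ as required.
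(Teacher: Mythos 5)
The paper does not prove this lemma itself (it is quoted from~\cite{2022TuranGrid}), so your proposal must stand on its own, and it has a fatal logical gap that no constant-tracking can repair: your key step runs the inequality in the wrong direction. You bound the direct loss in $T$ per deletion from \emph{above} (by $2(f_H(u,v)+f_H(v,u))$, allegedly $O(\alpha n^{1/2})$ after regularization) and then conclude that ``since the total drop in $T$ is bounded by $T(G)$, the number of deletions is at most $O(\alpha n^{3/2})$.'' An upper bound on the loss per deletion can never upper-bound the number of deletions; for that you would need a \emph{lower} bound on the loss per step, and none is available---the deletion rule fires precisely when the edge $uv$ carries \emph{few} high-codegree cherries on the failing side, so a single deletion may decrease $T$ by nothing at all and the process could a priori run for $e(G)$ steps with $T$ essentially unchanged. (Knowing that $T$ stays large neither halts the process nor certifies $e(H)\ge\tfrac12\alpha n^{3/2}$.) The correct argument inverts your bookkeeping: when $uv$ is deleted because $u$ fails with respect to $v$, at least $d(u)-\tfrac18\alpha n^{1/2}-1$ cherries $(v,u,w)$ with \emph{low} codegree $d(v,w)<\alpha^2/32$ are destroyed; for each ordered pair $(v,w)$ fewer than $\alpha^2/32$ such cherries can ever be charged over the whole process (all their centres are common neighbours of $v$ and $w$ at the first charging time, when the codegree is already below the threshold), so the total charge is below $\alpha^2n^2/32$. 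This bounds the number of deletions whose failing vertex has degree at least $\tfrac14\alpha n^{1/2}$ by roughly $\tfrac14\alpha n^{3/2}$, and deletions at lower-degree failing vertices number at most $n\cdot\tfrac14\alpha n^{1/2}$ trivially, giving the claimed $\tfrac12\alpha n^{3/2}$.

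Your regularization patch is also unavailable. Lemma~\ref{lem:JiangSeiverK-almost} returns a subgraph on possibly as few as $n^{(\varepsilon-\varepsilon^2)/(2+2\varepsilon)}$ vertices with correspondingly few edges, so it cannot yield the required subgraph $H$ on the \emph{same} vertex set with $e(H)\ge\tfrac12\alpha n^{3/2}$, and the maximum-degree bound it provides is relative to the new, smaller vertex count, not $O(\alpha n^{1/2})$ in terms of $n$. (Indeed, in Section~\ref{sec:prisms} this lemma is applied \emph{after} regularization, so its proof must not itself invoke it.) Even granting a maximum-degree bound, your bound on the total direct loss requires the number of deletions to already be small, which is circular. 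The two ingredients of your write-up that are sound---the estimate $T(G)\ge\alpha^2n^2$ (for $\alpha\ge 1$; otherwise the lemma is vacuous) and the observation that each pair crosses the codegree threshold at most once, losing fewer than $\alpha^2/16$ ordered cherries---do appear in the correct proof, but they must be deployed to count destroyed \emph{low}-codegree cherries rather than surviving high-codegree ones.
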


Then Lemma \ref{lem:cleaned subgraph} states that any graph with average degree at least $2d$ contains a clean subgraph with average degree at least $d$.

Let $G$ be a graph and let distinct vertices $x_i,y_i$ for $0\leq i\leq \ell$ form a copy of $P_{\ell+1}^{\square}$, where $x_iy_i\in E(G)$ for every $i$ and $x_{i-1}x_i,y_{i-1}y_i\in E(G)$ for every $1\leq i\leq \ell$ (see Figure~\ref{figure:K_2 path}). Now the \emph{weight} of this copy is defined to be $1/\prod_{i=1}^{\ell} \max(d_G(x_{i-1},y_i),\frac{d^2}{n})$.

\begin{lemma} \label{lem:lower bound all}
    There exists some $C=C(\ell)$ such that the following holds. Let $G$ be an $n$-vertex clean graph with average degree $d\geq Cn^{1/2}$. Then the total weight of copies of $P_{\ell+1}^{\square}$ in $G$ is at least $\Omega_\ell(nd^{\ell+1})$.
\end{lemma}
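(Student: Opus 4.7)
\medskip
\noindent\textbf{Proof plan.}
The plan is to build copies of $P_{\ell+1}^{\square}$ one vertical edge (``column'') at a time, using the cleanness hypothesis at each step to guarantee many good extensions. For $0 \le k \le \ell$, let $W_k$ denote the total weight, in the sense of the paragraph preceding the lemma, of ordered copies of $P_{k+1}^{\square}$ in $G$; since $P_1^{\square}$ is a single edge, the base case is $W_0 = 2e(G) = nd$. I will show by induction on $k$ that $W_k \ge c \cdot d \cdot W_{k-1}$ for some absolute constant $c > 0$. Iterating yields $W_{\ell} \ge nd \cdot (c \, d)^{\ell} = \Omega_{\ell}(nd^{\ell+1})$, as required.

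For the inductive step, fix an ordered copy $(x_0, y_0, \ldots, x_{k-1}, y_{k-1})$ of $P_k^{\square}$ with distinct vertices. To add a new column I need to choose $x_k, y_k$ with $x_{k-1}x_k,\, y_{k-1}y_k,\, x_ky_k \in E(G)$, distinct from the $2k$ vertices already used; the new weight factor is $1/\max(d_G(x_{k-1}, y_k), d^2/n)$. Applying the cleanness property to the edge $x_{k-1}y_{k-1}$, the vertex $y_{k-1}$ has at least $d/16$ neighbours $y_k$ satisfying $d_G(x_{k-1}, y_k) \ge d^2/(128n)$. Since $d \ge Cn^{1/2}$ with $C = C(\ell)$ sufficiently large, at least $d/32$ of these $y_k$ avoid the $\le 2\ell$ already-used vertices. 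For each such $y_k$, the number of common neighbours of $x_{k-1}$ and $y_k$ is at least $d^2/(128n) \ge C^2/128$, so at least half of them give valid choices of $x_k$. A brief case analysis (comparing $d_G(x_{k-1},y_k)$ against the threshold $d^2/n$) then shows that for every such $y_k$, the contribution $\frac{d_G(x_{k-1},y_k)/2}{\max(d_G(x_{k-1},y_k),\, d^2/n)}$ is bounded below by a positive absolute constant, yielding a total weighted extension of $\Omega(d)$ per column.

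The main delicate point will be the match between the cleanness threshold $d^2/(128n)$ on codegrees and the threshold $d^2/n$ inside the $\max$ defining the weight. These agree up to a universal constant, which is exactly what makes the per-column weighted increment $\Omega(d)$: for $y_k$ with $d_G(x_{k-1},y_k) \ge d^2/n$ the summand is $\Theta(1)$, while for $y_k$ with $d^2/(128n) \le d_G(x_{k-1},y_k) < d^2/n$, the numerator and denominator in the summand still differ by a bounded factor. The remaining distinctness bookkeeping is routine, since each step only needs to avoid $O(\ell)$ vertices out of sets of sizes $\Omega(d)$ and $\Omega(C^2)$ respectively, so all losses are absorbed into the constant $c$. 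Telescoping the per-column bounds then yields the stated $\Omega_{\ell}(nd^{\ell+1})$ lower bound.
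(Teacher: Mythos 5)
Your proposal is correct and follows essentially the same argument as the paper: a greedy column-by-column construction using the cleanness property to get $\ge d/16$ choices of $y_k$ with $d(x_{k-1},y_k)\ge d^2/(128n)$, then $\ge d(x_{k-1},y_k)/2$ choices of $x_k$, with the factor-$128$ gap between the cleanness threshold and the $d^2/n$ cap in the weight absorbed into the constant. Phrasing it as an induction on the total weight $W_k$ rather than a single telescoping product is only a cosmetic difference.
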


\begin{proof}
Choose $C$ large enough so that $\frac{d^2}{128n}\geq 10\ell$. Now we can build copies of $P_{\ell+1}^{\square}$ as follows. Choose $x_{0}y_{0}$ to be an arbitrary edge of $G$. Since $G$ is clean, $y_0$ has at least $d/16$ neighbours $y_1$ in $G$ such that $d_G(x_0,y_1)\geq \frac{d^2}{128n}$. Given such a choice (and since $d_G(x_0,y_1)\geq 10\ell$), there are at least $d_G(x_0,y_1)/2$ ways to choose $x_1$ to be a common vertex of $x_0$ and $y_1$ which is distinct from all the previous vertices. More generally, given $x_i,y_i$ for all $i\leq t$, there are at least $d/16$ ways to choose $y_{t+1}$ to be a neighbour of $y_t$ in $G$ such that $d_G(x_t,y_{t+1})\geq \frac{d^2}{128n}\geq 10\ell$, at least half of these choices are distinct from all previous vertices, and for each such choice there are at least $d_G(x_t,y_{t+1})/2$ ways to choose $x_{t+1}$. Overall, we find that the total weight of the copies of $P_{\ell+1}^{\square}$ constructed by this procedure is at least
\begin{equation*}
    nd\cdot \bigg(\frac{d}{32}\bigg)^{\ell}\cdot\prod_{i=1}^{\ell}\frac{d(x_{i-1},y_{i})}{2}\cdot \frac{1}{\prod_{i=1}^{\ell}\max(d(x_{i-1},y_{i}),\frac{d^{2}}{n})}=\Omega_{\ell}(nd^{\ell+1}).
\end{equation*}
\end{proof}

\begin{defn}
    For distinct vertices $w,z,w',z'$, let us call the $4$-tuple $(w,z,w',z')$ \emph{rich} if $wz,w'z'\in E(G)$, and moreover there are at least $4\ell$ pairwise vertex-disjoint edges $xy\in E(G)$ such that $wx,xw',zy,yz'\in E(G)$.
\end{defn}

The next two lemmas show that in $C_{2\ell}^{\square}$-free graphs, rich $4$-tuples are ``rare".

\begin{lemma} \label{lem:rich rare}
    Let $G$ be a bipartite graph and let $uv\in E(G)$. If $G$ has at least $\ell$ pairwise vertex-disjoint edges $w_iz_i$ ($1\leq i\leq \ell$) such that $w_1,w_2,\dots,w_{\ell}\in N(v)\setminus \{u\}$, $z_1,z_2,\dots,z_{\ell}\in N(u)\setminus\{v\}$ and $(w_{i-1},z_{i-1},w_i,z_i)$ is rich for every $2\leq i\leq \ell$, then $G$ contains a copy of $C_{2\ell}^{\square}$.
\end{lemma}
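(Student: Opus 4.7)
The plan is to construct the copy of $C_{2\ell}^{\square}$ explicitly, using the given vertices $u, v, w_1, \ldots, w_\ell, z_1, \ldots, z_\ell$ together with intermediate vertices $x_i, y_i$ (for $2 \leq i \leq \ell$) selected greedily from the rich-edge supplies. The top $2\ell$-cycle will be $v, w_1, x_2, w_2, x_3, \ldots, w_{\ell-1}, x_\ell, w_\ell, v$, using $v w_1, v w_\ell \in E(G)$ together with the edges $w_{i-1} x_i$ and $x_i w_i$ guaranteed by the richness of $(w_{i-1}, z_{i-1}, w_i, z_i)$. The bottom $2\ell$-cycle will be $u, z_1, y_2, z_2, \ldots, z_{\ell-1}, y_\ell, z_\ell, u$, built analogously. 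The matching of the prism will consist of $uv$, the given edges $w_i z_i$ for $1 \le i \le \ell$, and the edges $x_i y_i$ for $2 \leq i \leq \ell$, all of which are edges of $G$ by hypothesis or by the definition of richness.

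The only substantive issue is to ensure that the $4\ell$ vertices used in the prism are pairwise distinct. The hypothesis already forces $u, v, w_1, \ldots, w_\ell, z_1, \ldots, z_\ell$ to be $2\ell + 2$ distinct vertices: $u \neq v$ because $uv \in E(G)$, we have $w_i \neq u$ and $z_i \neq v$ by assumption, the $w_iz_i$ are pairwise vertex-disjoint edges, and bipartiteness separates $\{u, w_1, \ldots, w_\ell\}$ from $\{v, z_1, \ldots, z_\ell\}$. Thus it suffices to choose the intermediate edges $x_iy_i$ one at a time so that each avoids all vertices used so far.

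At step $i \in \{2, \ldots, \ell\}$, the richness of the $4$-tuple $(w_{i-1}, z_{i-1}, w_i, z_i)$ supplies a family of $4\ell$ pairwise vertex-disjoint candidate edges $xy$ with $w_{i-1}x, xw_i, z_{i-1}y, yz_i \in E(G)$. The set of vertices we must avoid has size at most $2 + 2\ell + 2(i-2) = 2\ell + 2i - 2 \leq 4\ell - 2$, namely $u, v$, the $2\ell$ vertices $w_j, z_j$, and the $2(i-2)$ previously chosen $x_j, y_j$. Because the candidate edges are pairwise vertex-disjoint, each forbidden vertex can ruin at most one candidate, so at least $4\ell - (4\ell - 2) = 2$ candidate edges remain, and we pick any one to serve as $x_iy_i$.

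The proof is essentially bookkeeping: the main point is that the parameter $4\ell$ in the definition of \emph{rich} has been tailored precisely so that this greedy argument closes at the last step $i = \ell$. Once the selection succeeds, the two $2\ell$-cycles described above together with the matching $\{uv\} \cup \{w_iz_i : 1 \le i \le \ell\} \cup \{x_iy_i : 2 \le i \le \ell\}$ visibly form a copy of $C_{2\ell}^{\square}$; bipartiteness of $G$ automatically ensures the cycles have the correct (even) length. I do not foresee any serious technical obstacle beyond the vertex-disjointness count just described.
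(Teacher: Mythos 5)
Your proof is correct and follows the same greedy strategy as the paper: build the prism from $u,v,w_i,z_i$ plus intermediate edges $x_iy_i$ chosen one at a time from the rich $4$-tuples, using the pairwise disjointness of the $4\ell$ candidate edges to survive the at most $4\ell-2$ forbidden vertices. The paper leaves the counting implicit ("suitable vertices $x_i,y_i$ can be found greedily"), so your write-up simply supplies the bookkeeping it omits.
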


\begin{proof}
    It suffices to find for each $2\leq i\leq \ell$ an edge $x_iy_i$ such that $w_{i-1}x_i,x_iw_i,z_{i-1}y_i,y_iz_i\in E(G)$ and the vertices $x_2,\dots,x_{\ell},y_2,\dots,y_{\ell}$ are distinct from each other and from the vertices $u,v,w_1,\dots,w_{\ell},z_1,\dots,z_{\ell}$. Indeed, if such vertices exist, then $u$, $v$, $w_i$, $z_i$, $x_i$ and $y_i$ together form a copy of $C_{2\ell}^{\square}$; see Figure~\ref{figure: rich}. Now notice that suitable vertices $x_i,y_i$ can be found greedily for each $i$, using the condition that $(w_{i-1},z_{i-1},w_i,z_i)$ is rich.
\end{proof}

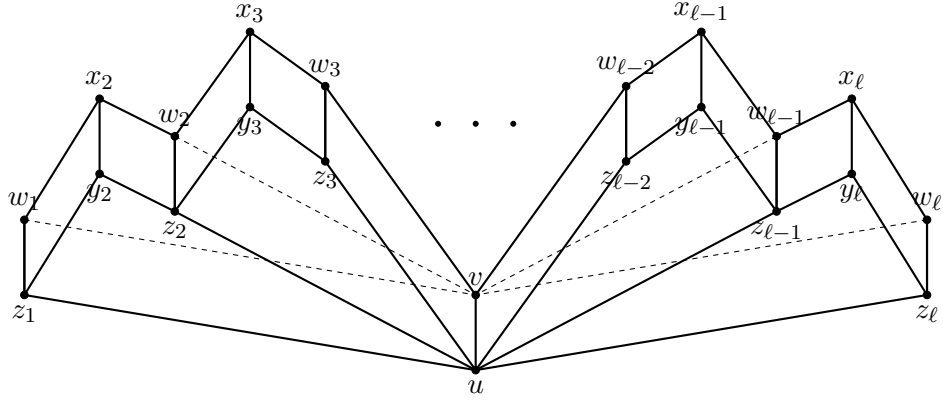
\begin{figure}
\centering
\begin{tikzpicture}[scale=1]

\node[shape = circle,draw = black,fill,inner sep=0pt,minimum size=1.0mm] at (6,-1) {};\node[above] at (6,-1) {$v$};

\node[shape = circle,draw = black,fill,inner sep=0pt,minimum size=1.0mm] at (6,-2) {};\node[below] at (6,-2) {$u$};
\draw[thick, fill opacity=0.3] (6,-1) -- (6,-2);

\foreach \m in {0,2,4,8,10,12}{
\foreach \n in {1}{
    \node[shape = circle,draw = black,fill,inner sep=0pt,minimum size=1.0mm] at (\m,{2-(\m-6)^2/18-\n})  {}; 
    \draw[thick, fill opacity=0.3] (\m,{2-(\m-6)^2/18-\n}) -- (6,{-\n-1});
}
\draw[thick, fill opacity=0.3] (\m,{2-(\m-6)^2/18}) -- (\m,{1-(\m-6)^2/18});
}
\foreach \m in {4,8}{
\foreach \n in {0}{
    \node[shape = circle,draw = black,fill,inner sep=0pt,minimum size=1.0mm] at (\m,{2-(\m-6)^2/18-\n})  {}; 
    \draw[thick, fill opacity=0.3] (\m,{2-(\m-6)^2/18-\n}) -- (6,{-\n-1});
}
\draw[thick, fill opacity=0.3] (\m,{2-(\m-6)^2/18}) -- (\m,{1-(\m-6)^2/18});
}
\foreach \m in {0,2,10,12}{
\foreach \n in {0}{
    \node[shape = circle,draw = black,fill,inner sep=0pt,minimum size=1.0mm] at (\m,{2-(\m-6)^2/18-\n})  {}; 
   \draw[dash pattern=on 2pt off 2pt on 2pt off 2pt, fill opacity=0.3] (\m,{2-(\m-6)^2/18-\n}) -- (6,{-\n-1});
}
\draw[thick, fill opacity=0.3] (\m,{2-(\m-6)^2/18}) -- (\m,{1-(\m-6)^2/18});
}

\foreach \m in {1,2,3}{
\node[above] at ({2*\m-2},{2-(2*\m-8)^2/18}) {$w_{\m}$};
\node[below] at ({2*\m-2},{1-(2*\m-8)^2/18}) {$z_{\m}$};
}
\foreach \m in {12}{
\node[above] at (\m,{2-(\m-6)^2/18}) {$w_{\ell}$};
\node[below] at (\m,{1-(\m-6)^2/18}) {$z_{\ell}$};
}
\foreach \m in {10}{
\node[above] at (\m,{2-(\m-6)^2/18}) {$w_{\ell-1}$};
\node[below] at (\m,{1-(\m-6)^2/18}) {$z_{\ell-1}$};
}
\foreach \m in {8}{
\node[above] at (\m,{2-(\m-6)^2/18}) {$w_{\ell-2}$};
\node[below] at (\m,{1-(\m-6)^2/18}) {$z_{\ell-2}$};
}

\foreach \m in {5.5,6,6.5}{
 \node[shape = circle,draw = black,fill,inner sep=0pt,minimum size=0.65mm] at (\m,{1.5-2/9})  {};
}

\foreach \m in {1,3,9,11}{
\foreach \n in {0,1}{
    \node[shape = circle,draw = black,fill,inner sep=0pt,minimum size=1.0mm] at (\m,{3-(\m-6)^2/18-\n})  {};
    \draw[thick, fill opacity=0.3] (\m,{3-(\m-6)^2/18-\n}) -- (\m+1,{2-(\m-5)^2/18-\n});
    \draw[thick, fill opacity=0.3] (\m,{3-(\m-6)^2/18-\n}) -- (\m-1,{2-(\m-7)^2/18-\n});
}
\draw[thick, fill opacity=0.3] (\m,{3-(\m-6)^2/18}) -- (\m,{2-(\m-6)^2/18});
}
\foreach \m in {1}{
\node[above] at (\m,{3-(\m-6)^2/18}) {$x_{2}$};
\node[below] at (\m,{2-(\m-6)^2/18}) {$y_{2}$};
}
\foreach \m in {3}{
\node[above] at (\m,{3-(\m-6)^2/18}) {$x_{3}$};
\node[below] at (\m,{2-(\m-6)^2/18}) {$y_{3}$};
}
\foreach \m in {9}{
\node[above] at (\m,{3-(\m-6)^2/18}) {$x_{\ell-1}$};
\node[below] at (\m,{2-(\m-6)^2/18}) {$y_{\ell-1}$};
}
\foreach \m in {11}{
\node[above] at (\m,{3-(\m-6)^2/18}) {$x_{\ell}$};
\node[below] at (\m,{2-(\m-6)^2/18}) {$y_{\ell}$};
}

\end{tikzpicture}
\caption{\label{figure: rich} Finding a $C^{\square}_{2\ell}$ from rich $4$-tuples in Lemma \ref{lem:rich rare}}
\end{figure}

From now on, let $K$ be some fixed absolute constant and let $C_0$ be the constant provided by Lemma \ref{lem:large codegree rare}.

\begin{lemma} \label{lem:rich with small codeg}
    Let $G$ be a bipartite $n$-vertex $C_{2\ell}^{\square}$-free graph with average degree $d\geq C_0n^{1/2}$ and maximum degree at most $Kd$, and let $uv\in E(G)$. Then the number of rich $4$-tuples $(w,z,w',z')$ such that $uz,uz',vw,vw'\in E(G)$ and $d(u,w),d(u,w'),d(v,z),d(v,z')\leq C_0d^{1/2}$ is $O_{K,\ell}(d^2)$.
\end{lemma}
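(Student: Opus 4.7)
The plan is to argue by contradiction: assume the number $T$ of rich $4$-tuples satisfying the small-codegree conditions exceeds $C_1 d^2$ for a suitably large constant $C_1 = C_1(K, \ell)$, and then exhibit a $C_{2\ell}^{\square}$ in $G$ via Lemma \ref{lem:rich rare}. The key device is an auxiliary graph $H$ with vertex set
\[
\mathcal{V} := \left\{(w,z) : wz \in E(G),\ w \in N(v)\setminus\{u\},\ z \in N(u)\setminus\{v\},\ d(u,w), d(v,z) \le C_0 d^{1/2}\right\},
\]
where we place an edge between $(w,z)$ and $(w',z')$ precisely when $(w,z,w',z')$ is a rich $4$-tuple (which already forces the four coordinates to be distinct). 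Since richness is symmetric under swapping the two pairs, we have $|E(H)| = T/2$.

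The codegree cutoff yields two crucial bounds. First, $|\mathcal{V}| \le |N(v)| \cdot C_0 d^{1/2} \le K C_0 d^{3/2}$. Second, since $G$ is bipartite the sets $N(u)$ and $N(v)$ are disjoint, so each vertex $s \in V(G)$ can occupy at most one coordinate position in any element of $\mathcal{V}$, and the codegree bound then forces $s$ to appear in at most $C_0 d^{1/2}$ elements of $\mathcal{V}$ in total. Taking $C_1 := 8\ell K C_0^2$ and assuming $T > C_1 d^2$, the average degree of $H$ exceeds $8 \ell C_0 d^{1/2}$, so Lemma \ref{lem:SubgraphLargeDeg} produces a non-empty subgraph $H' \subseteq H$ with $\delta(H') \ge 2\ell C_0 d^{1/2}$.

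Now build a path $v_1 v_2 \cdots v_\ell$ in $H'$ greedily so that the underlying pairs $(w_i, z_i)$ use $2\ell$ pairwise distinct vertices of $G$. Having chosen $v_1, \ldots, v_i$, set $S_i := \{w_1, z_1, \ldots, w_i, z_i\}$; the number of $H'$-neighbors of $v_i$ whose pair intersects $S_i$ is at most $|S_i| \cdot C_0 d^{1/2} \le 2\ell C_0 d^{1/2} \le \delta(H')$, so an unforbidden extension exists and serves as $v_{i+1}$. The resulting $\ell$ pairwise vertex-disjoint edges $w_i z_i \in E(G)$ have consecutive $4$-tuples $(w_{i-1}, z_{i-1}, w_i, z_i)$ rich by construction, so Lemma \ref{lem:rich rare} yields $C_{2\ell}^{\square} \subseteq G$, contradicting the hypothesis. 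Hence $T \le C_1 d^2 = O_{K,\ell}(d^2)$. I expect the main subtlety to be the vertex-disjointness of the greedy path across all $\ell$ steps, but the small-codegree hypothesis is tailor-made to keep the forbidden degree count below the minimum degree of $H'$.
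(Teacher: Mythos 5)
Your proposal is correct and follows essentially the same route as the paper's proof: the same auxiliary graph on small-codegree $4$-cycles through $uv$, the same bound $|\mathcal{V}|\le KC_0d^{3/2}$ on its vertex count, passage to a subgraph of large minimum degree, a greedy construction of a path whose underlying pairs are pairwise disjoint (using that each vertex of $G$ lies in at most $C_0d^{1/2}$ elements of $\mathcal{V}$), and an appeal to Lemma~\ref{lem:rich rare} for the contradiction. The only cosmetic difference is that you phrase it as a direct contradiction with explicit constants rather than as a bound on the average degree of $H$; note also that in the greedy step you in fact have $|S_i|\le 2(\ell-1)$, giving the strict inequality needed against $\delta(H')$.
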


\begin{proof}
Define an auxiliary graph $H$ whose vertex set consists of those edges $wz$ in $G$ for which $uvwz$ is a cycle with $d(u,w),d(v,z)\leq C_0 d^{1/2}$, and in which there is an edge between $wz$ and $w'z'$ if $(w,z,w',z')$ is rich. The lemma then reduces to showing that $H$ has $O_{K,\ell}(d^2)$ edges. Since the number of vertices in $H$ is at most $Kd\cdot C_0d^{1/2}$, it suffices to prove that $H$ has average degree $O_{K,\ell}(d^{1/2})$. Assume, for the sake of contradiction, that the average degree of $H$ is at least $100\ell C_0 d^{1/2}$. Then $H$ has a non-empty subgraph $H'$ in which the minimum degree is at least $50\ell C_0 d^{1/2}$. Hence, we may find vertices $w_iz_i$ in $H'$ for $1\leq i\leq \ell$ such that for each $2\leq i\leq \ell$, the vertices $w_{i-1}z_{i-1}$ and $w_iz_i$ are adjacent in $H'$, and $w_1,\dots,w_\ell,z_1,\dots,z_{\ell}$ are distinct. Indeed, given $w_1z_1,\dots,w_tz_t$, we just need to choose a neighbour $w_{t+1}z_{t+1}$ of $w_tz_t$ in $H'$ which does not have a vertex in common with any of $w_1z_1,\dots,w_tz_t$. By the codegree condition, at most $4\ell C_0d^{1/2}$ vertices of $H'$ are forbidden, so the minimum degree condition of $H'$ guarantees that we can find a suitable $w_{t+1}z_{t+1}$.

However, the vertices $w_1,\dots,w_\ell,z_1,\dots,z_\ell$ now satisfy the conditions of Lemma \ref{lem:rich rare}, so that lemma implies that $G$ contains a copy of $C_{2\ell}^{\square}$, which is a contradiction.
\end{proof}

\begin{lemma} \label{lem:upper bound bad}
    Let $G$ be a bipartite $n$-vertex $C_{2\ell}^{\square}$-free graph with average degree $d\geq C_0n^{1/2}$ and maximum degree at most $Kd$. Then
    \begin{enumerate}[label=(\alph*)]
        \item the total weight of copies of $P_{\ell+1}^{\square}$ with $d(x_{i-1},y_i)>C_0d^{1/2}$ for some $1\leq i\leq \ell$ is $O_{K,\ell}(n^2 d^{\ell-1})$,
        \item the total weight of copies of $P_{\ell+1}^{\square}$ with $d(x_{i},y_{i-1})>C_0d^{1/2}$ for some $1\leq i\leq \ell$ is $O_{K,\ell}(n^2 d^{\ell-1})$ and
        \item the total weight of copies of $P_{\ell+1}^{\square}$ such that $d(x_{i-1},y_i),d(x_{i},y_{i-1})\leq C_{0}d^{1/2}$ for every $1\leq i\leq \ell$ and $(x_{j-1},y_{j-1},x_{j+1},y_{j+1})$ is rich for some $1\leq j\leq \ell-1$ is $O_{K,\ell}(n^3 d^{\ell-3})$.
    \end{enumerate}
\end{lemma}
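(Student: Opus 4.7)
The plan is to handle parts (a), (b), (c) with a common template: fix a local obstruction that makes a $P_{\ell+1}^{\square}$ copy ``bad'', bound the number of such obstructions using Lemma~\ref{lem:large codegree rare} (for (a), (b)) or Lemma~\ref{lem:rich with small codeg} (for (c)), and then count the weighted extensions to a full copy via a uniform estimate. The core estimate I will use is that for any edge $(a,b)\in E(G)$,
\[
\sum_{x'\in N(a)}\sum_{y'\in N(b)\cap N(x')}\frac{1}{\max(d(a,y'),d^2/n)} = \sum_{y'\in N(b)}\frac{d(a,y')}{\max(d(a,y'),d^2/n)} \leq d(b) \leq Kd,
\]
together with the analogous ``left-extension'' sum in which the weight involves $d(x',b)$ instead (and is bounded by $d(a)\leq Kd$). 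Combined with the trivial pointwise bound $1/\max(d(x_{j-1},y_j),d^2/n)\leq n/d^2$, this says that each additional extension step of a partial copy contributes a weighted factor of at most $Kd$, while any position whose weight is not absorbed into an extension contributes at most $n/d^2$.

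For part (a) I fix the bad index $i$ and, for each labeled base edge $(x_{i-1},y_{i-1})\in E(G)$ (of which there are $O(nd)$), I apply Lemma~\ref{lem:large codegree rare} to bound by $C_0 d$ the number of $4$-cycles $(x_{i-1},y_{i-1},y_i,x_i)$ with $d(x_{i-1},y_i)>C_0 d^{1/2}$. Bounding the position-$i$ weight by $n/d^2$, and paying $(Kd)^{\ell-1}$ to fill the remaining $\ell-1$ positions via $i-1$ left extensions from $(x_{i-1},y_{i-1})$ and $\ell-i$ right extensions from $(x_i,y_i)$, gives $W_i\leq O(nd)\cdot d\cdot (n/d^2)\cdot (Kd)^{\ell-1}=O_{K,\ell}(n^2 d^{\ell-1})$; summing over $i$ proves (a). Part (b) is the same argument applied to the base edge traversed in reverse as $(y_{i-1},x_{i-1})$, so that Lemma~\ref{lem:large codegree rare} controls $d(y_{i-1},x_i)=d(x_i,y_{i-1})$ instead.

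For part (c) I fix the rich index $j$ and, for each labeled base edge $(x_j,y_j)\in E(G)$, I apply Lemma~\ref{lem:rich with small codeg} with $(u,v):=(y_j,x_j)$. The adjacencies $y_j y_{j\pm 1}, x_j x_{j\pm 1}\in E(G)$ are just the path edges of $P_{\ell+1}^{\square}$, and the codegree bounds $d(x_j,y_{j\pm 1}), d(y_j,x_{j\pm 1})\leq C_0 d^{1/2}$ are exactly the assumption of (c); the lemma then produces at most $O_{K,\ell}(d^2)$ rich $4$-tuples $(x_{j-1},y_{j-1},x_{j+1},y_{j+1})$ per base edge. Bounding the two unextended weights at positions $j$ and $j+1$ by $n/d^2$ each and paying $(Kd)^{\ell-2}$ for the remaining $\ell-2$ extensions yields $W_j\leq O(nd)\cdot O_{K,\ell}(d^2)\cdot (n/d^2)^2\cdot (Kd)^{\ell-2}=O_{K,\ell}(n^3 d^{\ell-3})$, and summing over $j$ proves (c).

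The main conceptual step I anticipate is in (c): recognizing that the correct base edge for Lemma~\ref{lem:rich with small codeg} is the matching edge $x_j y_j$ at the center of the rich $4$-tuple, with roles $(u,v)=(y_j,x_j)$. This identification turns the lemma's four adjacency conditions $uz,uz',vw,vw'\in E(G)$ and four codegree conditions into immediate consequences of the $P_{\ell+1}^{\square}$ structure and the hypothesis of (c); alternative naive choices such as a path edge $x_{j-1}x_j$ or $y_{j-1}y_j$ do not fit. Once this base edge is identified, the remainder of each part is a routine application of the weighted extension estimate.
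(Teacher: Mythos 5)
Your proposal is correct and follows essentially the same route as the paper: fix the bad index, count the central obstruction (via Lemma~\ref{lem:large codegree rare} for (a),(b) and Lemma~\ref{lem:rich with small codeg} for (c)), absorb the unextended positions at cost $n/d^2$ each, and pay a weighted factor of $Kd$ per extension step. Your identification of the base edge for Lemma~\ref{lem:rich with small codeg} as the matching edge $x_jy_j$ with $(u,v)=(y_j,x_j)$, and the reversal trick for (b), are exactly what the paper's proof does.
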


\begin{proof}
(a) Fix some $1\leq i\leq \ell$. The number of ways to choose $x_{i-1}$ and $y_{i-1}$ is at most $nd$. Then, by Lemma~\ref{lem:large codegree rare}, the number of possible choices for $x_i$ and $y_i$ (so that they form a 4-cycle with $x_{i-1}$ and $y_{i-1}$ and $d(x_{i-1},y_i)>C_0d^{1/2}$) is at most $C_0d$. If we assign weight $1/\max(d(x_{i-1},y_i),\frac{d^2}{n})$ to each such 4-cycle $x_{i-1}y_{i-1}y_ix_i$, then the total weight of possible 4-cycles is at most $\frac{nd(C_0d)}{d^2/n}\leq O_{K,\ell}(n^2)$. To extend such a $4$-cycle to a copy of $P_{\ell+1}^{\square}$, we can choose $y_{i+1}$ in at most $Kd$ ways, and given such a choice there are at most $d(x_i,y_{i+1})$ possibilities for $x_{i+1}$. Similarly, there are at most $Kd$ choices for $x_{i-2}$ and given such a choice there are at most $d(x_{i-2},y_{i-1})$ possibilities for $y_{i-2}$. Since the weight of a copy of $P_{\ell+1}^{\square}$ is $1/\prod_{i=1}^{\ell} \max(d(x_{i-1},y_i),\frac{d^2}{n})$, it follows fairly easily that the total weight of the desired copies of $P_{\ell+1}^{\square}$ is $O_{K,\ell}(n^2d^{\ell-1})$.

The proof of (b) is almost identical.

(c) Fix some $1\leq j\leq \ell-1$. There are at most $nd$ ways to choose $x_j$ and $y_j$. By Lemma \ref{lem:rich with small codeg}, there are only $O_{K,\ell}(d^2)$ possibilities for the vertices $x_{j-1},y_{j-1},x_{j+1},y_{j+1}$. Then the total weight of all possible copies of $P_{3}^{\square}$ formed by $x_{j-1},y_{j-1},x_j,y_j,x_{j+1},y_{j+1}$ is at most $O_{K,\ell}(nd\cdot d^2/ (d^2/n)^2)$, where the weight of such a subgraph is defined to be $1/\left(\max(d(x_{j-1},y_j),\frac{d^2}{n})\cdot\max(d(x_{j},y_{j+1}),\frac{d^2}{n})\right)$. It follows similarly to case (a) that the total weight of the desired copies of $P_{\ell+1}^{\square}$ is $O_{K,\ell}(n^3 d^{\ell-3})$.
\end{proof}

We now define those copies of $P_{\ell+1}^{\square}$ which we will use to build our $C_{2\ell}^{\square}$. Here and below, it is implicitly assumed that the host graph has average degree $d$.

\begin{defn}
    We say that vertices $x_i,y_i$ (for $0\leq i\leq \ell$) form a \emph{nice} copy of $P_{\ell+1}^{\square}$ if they form a copy of $P_{\ell+1}^{\square}$, for every $1\leq i\leq \ell$ the codegrees satisfy $d(x_{i-1},y_i),d(x_i,y_{i-1})\leq C_0d^{1/2}$, and for every $2\leq i\leq \ell$, the $4$-tuple $(x_{i-2},y_{i-2},x_i,y_i)$ is not rich.    
\end{defn}

We now prove the following lower bound for the total weight of nice copies of $P_{\ell+1}^{\square}$, which is the main result of this subsection.

\begin{lemma} \label{lem:many nice paths}
    If $C$ is sufficiently large compared to $K$ and $\ell$, then the following holds. Let $G$ be a clean, bipartite, $n$-vertex graph with average degree $d\geq Cn^{1/2}$ and maximum degree at most $Kd$. Then the total weight of nice copies of $P_{\ell+1}^{\square}$ in $G$ is $\Omega_\ell(nd^{\ell+1})$.
\end{lemma}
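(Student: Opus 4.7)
The plan is to combine the lower bound from Lemma~\ref{lem:lower bound all} on the total weight of all copies of $P_{\ell+1}^{\square}$ with the upper bounds from Lemma~\ref{lem:upper bound bad} on the total weight of copies that violate any of the three defining conditions of being nice. Concretely, I would argue that the weight of nice copies equals the total weight minus the weight of those copies that fail at least one condition, and then use $d \geq C n^{1/2}$ with $C$ large to absorb all the error terms into the main term.

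First, by Lemma~\ref{lem:lower bound all} (with $C$ chosen sufficiently large), the total weight of all copies of $P_{\ell+1}^{\square}$ in $G$ is at least $c_1(\ell)\, nd^{\ell+1}$ for some positive constant $c_1(\ell)$. Next, the complement of being nice is the union of three events indexed by $i$ or $j$: either $d(x_{i-1},y_i) > C_0 d^{1/2}$ for some $1 \leq i \leq \ell$, or $d(x_i,y_{i-1}) > C_0 d^{1/2}$ for some $1 \leq i \leq \ell$, or (assuming all the above codegrees are small) $(x_{j-1},y_{j-1},x_{j+1},y_{j+1})$ is rich for some $1 \leq j \leq \ell-1$. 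Summing the bounds of Lemma~\ref{lem:upper bound bad} over the $O(\ell)$ choices of $i$ and $j$, the total weight of non-nice copies is at most $O_{K,\ell}(n^2 d^{\ell-1}) + O_{K,\ell}(n^3 d^{\ell-3})$.

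It therefore suffices to show that $c_1(\ell)\, nd^{\ell+1}$ dominates these two error terms by a factor of, say, $2$ each. Using $d^2 \geq C^2 n$, we get
\[
\frac{n d^{\ell+1}}{n^2 d^{\ell-1}} = \frac{d^2}{n} \geq C^2 \quad\text{and}\quad \frac{n d^{\ell+1}}{n^3 d^{\ell-3}} = \frac{d^4}{n^2} \geq C^4,
\]
so taking $C$ sufficiently large compared to $K$ and $\ell$ (larger than the constants produced by Lemma~\ref{lem:upper bound bad} divided by $c_1(\ell)$) makes both error terms at most $\tfrac{1}{4} c_1(\ell)\, nd^{\ell+1}$. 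Subtracting, the total weight of nice copies is at least $\tfrac{1}{2} c_1(\ell)\, nd^{\ell+1} = \Omega_\ell(n d^{\ell+1})$, as desired.

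There is no genuine obstacle here beyond bookkeeping: the substantive content lies in Lemmas~\ref{lem:lower bound all} and~\ref{lem:upper bound bad}, and the only remaining task is to verify that the assumption $d \geq Cn^{1/2}$ lets the linear-in-$n$ main term $nd^{\ell+1}$ absorb the quadratic- and cubic-in-$n$ error terms. The mildly subtle point worth being careful about is that the lower bound from Lemma~\ref{lem:lower bound all} requires $G$ to be clean (which is assumed) and $C$ large enough to ensure $d^2/(128 n) \geq 10\ell$, while the upper bounds from Lemma~\ref{lem:upper bound bad} require $G$ to be bipartite, $C_{2\ell}^{\square}$-free, $d \geq C_0 n^{1/2}$, and $K$-almost regular; in the target lemma both sets of hypotheses are supplied, so after choosing $C \geq \max(C_0, \text{const}(K,\ell))$ all the inputs are met simultaneously.
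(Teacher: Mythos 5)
Your argument is correct and is essentially identical to the paper's proof: apply Lemma~\ref{lem:lower bound all} for the lower bound on the total weight, apply Lemma~\ref{lem:upper bound bad} to bound the weight of non-nice copies, and use $d\geq Cn^{1/2}$ to absorb the error terms for $C$ large. One pedantic remark: Lemma~\ref{lem:upper bound bad} assumes $C_{2\ell}^{\square}$-freeness, which is not among the hypotheses of the present lemma (the paper glosses over this too); it is harmless because if $G$ does contain $C_{2\ell}^{\square}$ the ultimate goal of the section is already achieved, but strictly speaking the freeness hypothesis should be carried along.
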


\begin{proof}
By Lemma \ref{lem:lower bound all}, the total weight of copies of $P_{\ell+1}^{\square}$ in $G$ is at least $\Omega_\ell(nd^{\ell+1})$. On the other hand, by Lemma \ref{lem:upper bound bad}, the total weight of the copies that fail to be nice is $O_{K,\ell}(n^2d^{\ell-1}+n^3d^{\ell-3})$. It follows that if $C$ is sufficiently large, then at most half of the total weight is on copies which are not nice, completing the proof of the lemma.
\end{proof}

\subsection{Completing the proof of Theorem \ref{thm:Main1}}

We now define those homomorphic copies of $C_{2\ell}^{\square}$ that are obtained by gluing together two (suitable) nice copies of $P_{\ell+1}^{\square}$.

\begin{defn}
    We say that vertices $x_i,y_i,x_i',y_i'$ (for $0\leq i\leq \ell$) form a nice homomorphic copy of $C_{2\ell}^{\square}$ if $x_0=x_0',y_0=y_0',x_{\ell}=x_{\ell}',y_{\ell}=y_{\ell}'$, both $\{x_i,y_i:0\leq i\leq \ell\}$ and $\{x_i',y_i':0\leq i\leq \ell\}$ form a nice copy of $P_{\ell+1}^{\square}$, each $x_i$ is distinct from all other vertices except possibly $x_i'$ and each $y_i$ is distinct from all other vertices except possibly $y_i'$. The weight of such a homomorphic copy of $C_{2\ell}^{\square}$ is defined to be $\left(\prod_{i=1}^{\ell} \max(d(x_{i-1},y_i),\frac{d^2}{n}) \cdot \prod_{i=1}^{\ell} \max(d(x_{i-1}',y_i'),\frac{d^2}{n})\right)^{-1}$.
\end{defn}

\begin{figure}
\centering
\begin{tikzpicture}[scale=1]

\foreach \m in {1,2.5,3.5,5}{
\foreach \n in {1,2,-1,-2}{
    \node[shape = circle,draw = black,fill,inner sep=0pt,minimum size=1.0mm] at (\m,\n) {};
}
\draw[thick, fill=blue, fill opacity=0.3] (\m,1) -- (\m,2);
\draw[thick, fill=blue, fill opacity=0.3] (\m,-1) -- (\m,-2);
}

\node[shape = circle,draw = black,fill,inner sep=0pt,minimum size=1.0mm] at (-0.5,0.5) {};\node[above] at (-1,0.5) {$x_0 = x'_{0}$};
\node[shape = circle,draw = black,fill,inner sep=0pt,minimum size=1.0mm] at (-0.5,-0.5) {};\node[below] at (-1,-0.5) {$y_0 = y'_{0}$};
\draw[thick, fill=blue, fill opacity=0.3] (-0.5,0.5) -- (-0.5,-0.5); \draw[thick, fill=blue, fill opacity=0.3] (-0.5,0.5) -- (1,2);
\draw[thick, fill=blue, fill opacity=0.3] (-0.5,0.5) -- (1,-1);
\draw[thick, fill=blue, fill opacity=0.3] (-0.5,-0.5) -- (1,-2);
\draw[thick, fill=blue, fill opacity=0.3] (-0.5,-0.5) -- (1,1);

\node[shape = circle,draw = black,fill,inner sep=0pt,minimum size=1.0mm] at (6.5,0.5) {};\node[above] at (7,0.5) {$x_\ell = x'_{\ell}$};
\node[shape = circle,draw = black,fill,inner sep=0pt,minimum size=1.0mm] at (6.5,-0.5) {};\node[below] at (7,-0.5) {$y_\ell = y'_{\ell}$};
\draw[thick, fill=blue, fill opacity=0.3] (6.5,0.5) -- (6.5,-0.5); \draw[thick, fill=blue, fill opacity=0.3] (6.5,0.5) -- (5,2);
\draw[thick, fill=blue, fill opacity=0.3] (6.5,0.5) -- (5,-1);
\draw[thick, fill=blue, fill opacity=0.3] (6.5,-0.5) -- (5,-2);
\draw[thick, fill=blue, fill opacity=0.3] (6.5,-0.5) -- (5,1);

\node[above] at (1,2) {$x_1$};
\node[above] at (2.5,2) {$x_i$};
\node[above] at (3.5,2) {$x_{i+1}$};
\node[above] at (5,2) {$x_{\ell-1}$};
\node[above] at (1,-1) {$x'_1$};
\node[above] at (2.5,-1) {$x'_i$};
\node[above] at (3.5,-1) {$x'_{i+1}$};
\node[above] at (5,-1) {$x'_{\ell-1}$};

\node[below] at (1,1) {$y_1$};
\node[below] at (2.5,1) {$y_i$};
\node[below] at (3.5,1) {$y_{i+1}$};
\node[below] at (5,1) {$y_{\ell-1}$};
\node[below] at (1,-2) {$y'_1$};
\node[below] at (2.5,-2) {$y'_i$};
\node[below] at (3.5,-2) {$y'_{i+1}$};
\node[below] at (5,-2) {$y'_{\ell-1}$};

\node at (1.75,1.5) {$\cdots$};
\node at (4.25,1.5) {$\cdots$};
\node at (1.75,-1.5) {$\cdots$};
\node at (4.25,-1.5) {$\cdots$};

\foreach \n in {1,2,-1,-2}{
\draw[dash pattern=on 2pt off 3pt on 4pt off 4pt, fill opacity=0.3] (1,\n) -- (2.5,\n);
\draw[thick, fill opacity=0.3] (2.5,\n) -- (3.5,\n);
\draw[dash pattern=on 2pt off 3pt on 4pt off 4pt, fill opacity=0.3](3.5,\n) -- (5,\n);
}
\end{tikzpicture}
\caption{\label{figure: prisms}$C_{2\ell}^{\square}$}
\end{figure}
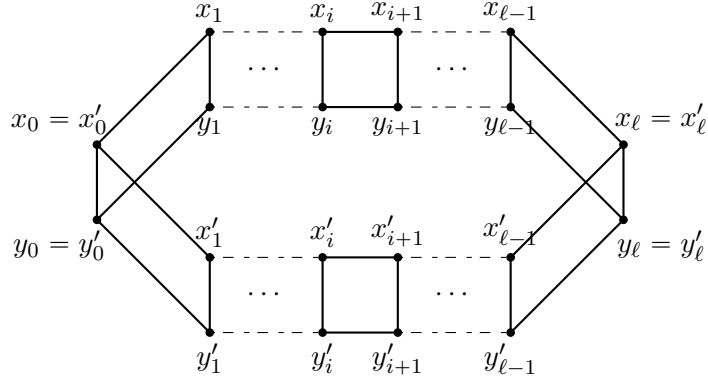

The next lemma shows that there is large total weight on nice homomorphic copies of $C_{2\ell}^{\square}$.

\begin{lemma} \label{lem:lower bound nice}
    If $C$ is sufficiently large compared to $K$ and $\ell$, then the following holds. Let $G$ be a clean, bipartite, $n$-vertex graph with average degree $d\geq Cn^{1/2}$ and maximum degree at most $Kd$. Then the total weight of nice homomorphic copies of $C_{2\ell}^{\square}$ in $G$ is $\Omega_{\ell}(d^{2\ell})$.
\end{lemma}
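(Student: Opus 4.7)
The plan is to combine a Cauchy--Schwarz second-moment argument with a careful accounting of degenerate configurations. For each ordered quadruple $\vec{e}=(x_0,y_0,x_\ell,y_\ell)$ with $x_0y_0,x_\ell y_\ell\in E(G)$, let $W(\vec{e})$ denote the total weight of nice copies of $P_{\ell+1}^{\square}$ with these endpoints. Lemma~\ref{lem:many nice paths} provides $\sum_{\vec{e}}W(\vec{e})=\Omega_\ell(nd^{\ell+1})$. Since the number of endpoint quadruples is at most $(2e(G))^2\le n^2d^2$, Cauchy--Schwarz yields
\[
\sum_{\vec{e}}W(\vec{e})^2 \;\ge\; \frac{\bigl(\sum_{\vec{e}}W(\vec{e})\bigr)^2}{n^2d^2} \;=\; \Omega_\ell(d^{2\ell}).
\]
The left-hand side is exactly the total weight of ordered pairs of nice $P_{\ell+1}^{\square}$'s with matching endpoint quadruples, and since the weight of such a pair equals the product of path weights, this coincides with the total weight of the corresponding homomorphic copies of $C_{2\ell}^{\square}$ obtained by gluing two nice paths along $x_0,y_0,x_\ell,y_\ell$.

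Every such ordered pair is automatically a homomorphic $C_{2\ell}^{\square}$, but it can fail to be \emph{nice} because of forbidden vertex identifications between the two paths---namely $x_i=x_j'$ or $y_i=y_j'$ with $i\ne j$, or cross-identifications $x_i=y_j'$ or $y_i=x_j'$ (excluding the four forced endpoint identifications at $i=0$ and $i=\ell$, and the $O(\ell)$ allowed same-position identifications $x_i=x_i'$ and $y_i=y_i'$). The strategy is to show that the total weight of pairs with at least one forbidden identification is at most half of the lower bound above, provided $C$ is chosen sufficiently large in terms of $K$ and $\ell$, from which the conclusion follows immediately.

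For each of the $O(\ell^2)$ forbidden identification types the contribution is controlled separately. For instance, the total weight of pairs with the identification $x_i=y_j'=v$ is at most $\sum_{v,\vec{e}} W_X^{(i)}(v,\vec{e})\cdot W_Y^{(j)}(v,\vec{e})$, where $W_X^{(i)}(v,\vec{e})$ (respectively $W_Y^{(j)}(v,\vec{e})$) is the weight of nice $P_{\ell+1}^{\square}$'s with $x_i=v$ (respectively $y_j=v$) and endpoint quadruple $\vec{e}$. To bound each such sum I plan to reuse the principles behind Lemma~\ref{lem:upper bound bad}: the codegree bound $d(x_{i-1},y_i)\le C_0d^{1/2}$ and the sparsity of rich $4$-tuples (Lemma~\ref{lem:rich with small codeg}) built into the definition of nice. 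Pinning down one extra shared vertex between the two paths should cost a factor of $\Omega_\ell(d)$ relative to the unconstrained pair count, producing a per-type upper bound of $O_{K,\ell}(d^{2\ell-1})$ (up to polynomial-in-$C$ constants which are absorbed by taking $C$ large). Summing over the $O(\ell^2)$ identification types preserves this bound and is negligible compared to $\Omega_\ell(d^{2\ell})$.

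The main obstacle is the case analysis itself. Some identifications are subtle---particularly those occurring close to the forced endpoints, and those involving several simultaneous cross-identifications (e.g.\ $x_i=y_j'$ together with $y_i=x_j'$), where a naive bound does not directly save a full factor of $d$ per collision. For these cases one may need to argue that multiple simultaneous collisions already force a rich $4$-tuple somewhere in one of the paths (contradicting niceness via Lemma~\ref{lem:rich with small codeg}), or handle the collision structure by breaking the two paths into sub-paths glued at $v$ and counting each piece with the same codegree-based bound used in Lemma~\ref{lem:upper bound bad}. Once every identification type is shown to contribute at most a negligible fraction of $d^{2\ell}$, the lemma follows by combining this with the Cauchy--Schwarz lower bound.
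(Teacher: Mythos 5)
Your Cauchy--Schwarz step over endpoint quadruples is exactly the paper's, but the paper sidesteps your entire degenerate-case analysis with one device you are missing: it randomly partitions $V(G)$ into $2\ell+2$ classes $X_0,\dots,X_\ell,Y_0,\dots,Y_\ell$ and keeps only the nice copies of $P_{\ell+1}^{\square}$ with $x_i\in X_i$ and $y_i\in Y_i$ for all $i$. This retains an $\Omega_\ell(nd^{\ell+1})$ total weight for some fixed partition, and then any two retained paths with matching endpoints can only coincide at same-position, same-letter vertices (all other coincidences are impossible because the vertices lie in distinct classes), so every glued pair is automatically a nice homomorphic copy of $C_{2\ell}^{\square}$ and no identification has to be excluded.

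In your route there is a genuine gap. First, your per-type target $O_{K,\ell}(d^{2\ell-1})$ is miscalibrated: the unconstrained count of pairs of nice paths with matching endpoints (via Lemma~\ref{lem:count extensions}(i) and Lemma~\ref{lem:count nonrich extensions}) is $O_{K,\ell}(nd^{2\ell-1})$, so saving a factor of $d$ yields $O(nd^{2\ell-2})\le C^{-2}d^{2\ell}$, which suffices, while saving only $d^{1/2}$ yields $O(nd^{2\ell-3/2})$, which is of order $n^{1/4}d^{2\ell}$ when $d=\Theta(n^{1/2})$ and hence is \emph{not} negligible. Second, there are identification types for which only the $d^{1/2}$ saving is available. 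The $4$-cycle weight is $1/\max(d(x_{i-1},y_i),d^2/n)$, so fixing $y_1$ or $x_{\ell-1}$ next to a known end-edge restricts the partner vertex to $d(x_0,y_1)$ (resp.\ $d(x_{\ell-1},y_\ell)$) choices, which cancels against the weight and saves a full $d$; but fixing $x_1$ or $y_{\ell-1}$ restricts the partner to $\le C_0d^{1/2}$ choices via the \emph{other} diagonal, which does not cancel, and one only gets $C_0d^{1/2}$. If an internal position $2\le i\le\ell-2$ is involved you can charge the collision to that path and use non-richness to save a full $d$ (as in Lemma~\ref{lem:internal vertex}), but for odd $\ell$ the cross-identification $x_1=y'_{\ell-1}$ (and symmetrically $y_{\ell-1}=x'_1$) is parity-consistent and sits at a ``bad'' position in \emph{both} paths, so either order of counting gives only $O(nd^{2\ell-3/2})$. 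Your fallback suggestions do not apply here: this is a single collision, so no rich $4$-tuple is forced, and splitting the paths at the shared vertex only worsens the count. Unless you find a new idea for these cases, the argument does not close; the random partition is the clean way out.
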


\begin{proof}
By Lemma \ref{lem:many nice paths}, the total weight of nice copies of $P_{\ell+1}^{\square}$ is $\Omega_\ell(nd^{\ell+1})$. Let $X_0\cup X_1\cup \dots \cup X_{\ell}\cup Y_0\cup Y_1 \cup \dots \cup Y_{\ell}$ be a random partition of $V(G)$, where any $v\in V(G)$ belongs to each of the $2\ell+2$ sets with probability $\frac{1}{2\ell+2}$, independently of the other vertices. Then the expected value of the total weight of those nice copies of $P_{\ell+1}^{\square}$ which satisfy $x_i\in X_i$ and $y_i\in Y_i$ for all $i$ is $\Omega_{\ell}(nd^{\ell+1})$. Choose a partition for which this total weight is $\Omega_{\ell}(nd^{\ell+1})$.  Now observe that whenever both $\{x_i,y_i:0\leq i\leq \ell\}$ and $\{x'_i,y'_i:0\leq i\leq \ell\}$ form a nice copy of $P_{\ell+1}^{\square}$ with $x_i,x'_i\in X_i$, $y_i,y'_i\in Y_i$ and $x_0=x_0',y_0=y_0',x_{\ell}=x_{\ell}',y_{\ell}=y_{\ell}'$, then the vertices $x_i,x'_i,y_i,y'_i$ together form a nice homomorphic copy of $C_{2\ell}^{\square}$. Given edges $x_{0}y_{0}$ and $x_{\ell}y_{\ell}$, write $f(x_0,y_0,x_{\ell},y_{\ell})$ for the total weight of nice copies of $P_{\ell+1}^{\square}$ extending $x_0,y_0,x_{\ell},y_{\ell}$ and satisfying $x_i\in X_i,y_i\in Y_i$ for all $i$. Then the total weight of nice homomorphic copies of $C_{2\ell}^{\square}$ is at least $$\sum_{x_0,y_0,x_{\ell},y_{\ell}} f(x_0,y_0,x_{\ell},y_{\ell})^2\geq (nd)^2 \left(\frac{\sum_{x_0,y_0,x_{\ell},y_{\ell}} f(x_0,y_0,x_{\ell},y_{\ell})}{(nd)^2}\right)^2\geq (nd)^2\frac{\Omega_{\ell}((nd^{\ell+1})^2)}{(nd)^4}\geq \Omega_{\ell}(d^{2\ell}),$$
where the first inequality follows from the convexity of $x^2$ and the second inequality follows from our lower bound on the total weight of nice copies of $P_{\ell}^{\square}$ with $x_i\in X_i$ and $y_i\in Y_i$ for all $i$.
\end{proof}

In what follows we give an upper bound for the total weight of nice homomorphic copies of $C_{2\ell}^{\square}$ in $C_{2\ell}^{\square}$-free graphs. By the $C_{2\ell}^{\square}$-freeness, in any such copy we have $x_i=x'_i$ or $y_i=y'_i$ for some $1\leq i\leq \ell-1$. A key lemma for bounding the total weight of nice homomorphic copies of $C_{2\ell}^{\square}$ is as follows.

\begin{lemma} \label{lem:count nonrich extensions}
    Let $x_{i-1}y_{i-1}$ and $x_{i+1}y_{i+1}$ be disjoint edges and assume that $(x_{i-1},y_{i-1},x_{i+1},y_{i+1})$ is not rich. Then the total weight of copies of $P_{3}^{\square}$ formed by vertices $x_{i-1},y_{i-1},x_i,y_i,x_{i+1},y_{i+1}$ is at most $8\ell$, where the weight of one such copy of $P_{3}^{\square}$ is defined to be $(\max(d(x_{i-1},y_i),d^2/n)\cdot\max(d(x_{i},y_{i+1}),d^2/n))^{-1}$.
\end{lemma}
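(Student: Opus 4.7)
The plan is to translate the non-richness hypothesis into a small vertex cover for an auxiliary graph whose edges are the ``good'' pairs $(x_i,y_i)$, and then run a short weighted count. First I would note that a copy of $P_3^{\square}$ extending the given four vertices corresponds exactly to an edge $x_iy_i\in E(G)$ with $x_i\in N(x_{i-1})\cap N(x_{i+1})$ and $y_i\in N(y_{i-1})\cap N(y_{i+1})$. The non-richness of $(x_{i-1},y_{i-1},x_{i+1},y_{i+1})$ says this collection of edges contains no matching of size $4\ell$, so the vertex set $S$ of a maximum such matching has $|S|<8\ell$ and meets every contributing pair.

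Next I would split the total weight as $W\le W_1+W_2$, where $W_1$ collects the contribution from pairs with $x_i\in S$ and $W_2$ from pairs with $y_i\in S$. For $W_1$, fix $x_i\in S$; then the factor $1/\max(d(x_i,y_{i+1}),d^2/n)$ is constant in $y_i$, and every contributing $y_i$ lies in $N(x_i)\cap N(y_{i+1})$, giving at most $d(x_i,y_{i+1})$ choices. Using the crude bound $1/\max(d(x_{i-1},y_i),d^2/n)\le n/d^2$, the inner sum over $y_i$ is at most
\[
\frac{d(x_i,y_{i+1})}{\max(d(x_i,y_{i+1}),d^2/n)}\cdot\frac{n}{d^2}\;\le\;\frac{n}{d^2}.
\]
Summing over $x_i\in S$ yields $W_1\le 8\ell\cdot n/d^2$, and the symmetric argument (fix $y_i\in S$ and use $x_i\in N(y_i)\cap N(x_{i-1})$ with at most $d(x_{i-1},y_i)$ choices, pairing with the now-constant factor $1/\max(d(x_{i-1},y_i),d^2/n)$) gives $W_2\le 8\ell\cdot n/d^2$.

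Therefore $W\le 16\ell\cdot n/d^2$, and since we are working in the regime $d\ge Cn^{1/2}$ with $C$ sufficiently large (certainly $C\ge 2$), this yields $W\le 8\ell$ as claimed. The main subtlety to get right is the asymmetry of the weight: $x_{i-1}$ is paired with $y_i$ while $x_i$ is paired with $y_{i+1}$, so when $x_i$ is fixed one must use precisely the common-neighbour bound $|N(x_i)\cap N(y_{i+1})|\le d(x_i,y_{i+1})$, which is exactly what cancels the constant $1/\max(d(x_i,y_{i+1}),d^2/n)$; choosing the ``wrong'' side of the weight to bound first breaks this cancellation. Once the vertex cover is in place and this matching is noted, the computation is immediate.
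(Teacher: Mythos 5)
Your argument is essentially the paper's: extract a maximal matching among the edges $x_iy_i$ that complete a copy of $P_3^{\square}$, use non-richness to bound its vertex set $S$ by $8\ell$, and for each fixed vertex of $S$ cancel the count of the free endpoint (at most $d(x_i,y_{i+1})$, resp.\ $d(x_{i-1},y_i)$) against the matching codegree factor in the weight. The one difference is that you bound the remaining factor by $n/d^2$ and then invoke $d\ge Cn^{1/2}$ to absorb it; this is unnecessary, since $x_i$ is a common neighbour of $x_{i-1}$ and $y_i$, so $\max(d(x_{i-1},y_i),d^2/n)\ge 1$ and that factor can be bounded by $1$ unconditionally, which matters slightly because the lemma (and the lemmas that invoke it) is stated without any hypothesis relating $d$ and $n$.
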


\begin{proof}
Take a maximal set of disjoint edges $uv$ such that $x_{i-1},y_{i-1},u,v,x_{i+1},y_{i+1}$ form a copy of $P_{3}^{\square}$. Let $S$ be the set of vertices appearing in these edges. By the assumption that $(x_{i-1},y_{i-1},x_{i+1},y_{i+1})$ is not rich, we choose at most $4\ell$ edges, so $|S|\leq 8\ell$. By the maximality assumption, any edge $x_iy_i$ for which $x_{i-1},y_{i-1},x_i,y_i,x_{i+1},y_{i+1}$ form a copy of $P_{3}^{\square}$ satisfies that $x_i\in S$ or $y_i\in S$. For any given $x_i\in S$, the total weight of copies of $P_{3}^{\square}$ extending $x_{i-1},y_{i-1},x_i,x_{i+1},y_{i+1}$ is at most 1, since there are at most $d(x_i,y_{i+1})$ of them and the weight of each of them is at most $1/d(x_i,y_{i+1})$. Similarly, for any given $y_i\in S$, the total weight of copies of $P_{3}^{\square}$ extending $x_{i-1},y_{i-1},y_i,x_{i+1},y_{i+1}$ is at most 1. This completes the proof.
\end{proof}

We will also use the following very simple lemma. Here and below the weight of a $4$-cycle with vertices $x_i,y_i,x_{i+1},y_{i+1}$ is defined to be $1/\max(d(x_i,y_{i+1}),d^2/n)$.

\begin{lemma} \label{lem:count extensions}
    Let $G$ be an $n$-vertex graph with average degree $d$ and maximum degree at most $Kd$.
    Let $x_iy_i$ be an edge in $G$.
    \begin{enumerate}[label=(\roman*)]
        \item The total weight on $4$-cycles with vertices $x_i,y_i,x_{i+1},y_{i+1}$ is at most $Kd$.
        \item The total weight on $4$-cycles with vertices $x_{i-1},y_{i-1},x_{i},y_{i}$ is at most $Kd$.
        \item For any vertex $x_{i+1}$, the number of $4$-cycles with vertices $x_i,y_i,x_{i+1},y_{i+1}$ such that $d(x_{i+1},y_{i})\leq C_0d^{1/2}$ is at most $C_0d^{1/2}$. In particular, the total weight on these cycles is at most $C_0d^{1/2}$.
        \item For any vertex $y_{i+1}$, the number of $4$-cycles with vertices $x_i,y_i,x_{i+1},y_{i+1}$ such that $d(x_{i},y_{i+1})\leq C_0d^{1/2}$ is at most $C_0d^{1/2}$. In particular, the total weight on these cycles is at most $C_0d^{1/2}$.
        \item For any vertex $x_{i-1}$, the number of $4$-cycles with vertices $x_{i-1},y_{i-1},x_{i},y_{i}$ such that $d(x_{i-1},y_{i})\leq C_0d^{1/2}$ is at most $C_0d^{1/2}$. In particular, the total weight on these cycles is at most $C_0d^{1/2}$.
        \item For any vertex $y_{i-1}$, the number of $4$-cycles with vertices $x_{i-1},y_{i-1},x_{i},y_{i}$ such that $d(x_{i},y_{i-1})\leq C_0d^{1/2}$ is at most $C_0d^{1/2}$. In particular, the total weight on these cycles is at most $C_0d^{1/2}$.
    \end{enumerate}
\end{lemma}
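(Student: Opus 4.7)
The proof plan is to verify each of the six items by a direct double-counting argument, parameterizing over the unknown vertices of the $4$-cycle in a careful order. All six parts are small variations on the same idea, and the crux is simply to recognize that the weight $1/\max(d(x_i,y_{i+1}),d^2/n)$ of a $4$-cycle is always at most $1/d(x_i,y_{i+1})$ (and in particular at most $1$, since codegrees of adjacent pairs in the 4-cycle are positive integers).

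For part (i), I would enumerate first over $y_{i+1}\in N(y_i)$ and then over $x_{i+1}\in N(x_i)\cap N(y_{i+1})$, so that for a fixed $y_{i+1}$ there are exactly $d(x_i,y_{i+1})$ choices for $x_{i+1}$. Since each such $4$-cycle carries weight at most $1/d(x_i,y_{i+1})$, the inner sum over $x_{i+1}$ contributes at most $1$, and summing over the $d(y_i)\le Kd$ choices of $y_{i+1}$ yields the bound $Kd$. Part (ii) is identical after reversing the role of the index: parameterize over $y_{i-1}\in N(y_i)$ and then over $x_{i-1}$ as a common neighbor of $x_i$ and $y_{i-1}$, so that the weight $1/\max(d(x_{i-1},y_i),d^2/n)\le 1/d(x_{i-1},y_i)$ again makes the inner sum at most $1$.

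For parts (iii)--(vi), fixing one additional vertex of the $4$-cycle forces the remaining free vertex to lie in a specific common neighborhood whose size is controlled by the hypothesized codegree bound. For instance in (iii), with $x_i,y_i$ and $x_{i+1}$ fixed, the vertex $y_{i+1}$ must lie in $N(y_i)\cap N(x_{i+1})$, a set of size $d(y_i,x_{i+1})\le C_0d^{1/2}$; since each weight is at most $1$, both the count and the total weight are bounded by $C_0d^{1/2}$. The arguments for (iv), (v), (vi) are verbatim the same after swapping the roles of $x$ and $y$ and of $i$ and $i-1$.

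There is no real obstacle here; the only potential pitfall is keeping track of which codegree appears in the weight versus which codegree appears in the hypothesis, and verifying that the weight is uniformly bounded by $1$. For the latter, note that in any $4$-cycle through $x_iy_i,x_{i+1}y_{i+1}$ we have $d(x_i,y_{i+1})\ge 1$, so $1/\max(d(x_i,y_{i+1}),d^2/n)\le 1$, which is all that is needed to convert a count bound into a weight bound. Thus the whole lemma reduces to six short enumerations, and the proof can be written in a few lines by treating (i)--(ii) together and (iii)--(vi) together.
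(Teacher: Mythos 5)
Your proof is correct and follows essentially the same route as the paper: the authors prove (i) by exactly your enumeration (at most $Kd$ choices of $y_{i+1}$, then $d(x_i,y_{i+1})$ choices of $x_{i+1}$, each cycle of weight at most $1/d(x_i,y_{i+1})$), note that (ii) is almost identical, and dismiss (iii)--(vi) as obvious, which you have simply spelled out. Your observation that the weight is at most $1$ because opposite vertices of a $4$-cycle have codegree at least $1$ is the right justification for the "in particular" clauses.
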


\begin{proof}
We only prove (i); the proof of (ii) is almost identical and all the other statements are obvious. The number of ways to choose $y_{i+1}$ to be a neighbour of $y_i$ is at most $Kd$. Given such a choice, there are at most $d(x_i,y_{i+1})$ ways to choose $x_{i+1}$ and each such $4$-cycle has weight at most $1/d(x_i,y_{i+1})$.
\end{proof}

The following lemma deals with almost all types of nice homomorphic copies of $C_{2\ell}^{\square}$.

\begin{lemma} \label{lem:internal vertex}
    Let $\ell\geq 4$ and let $G$ be a $C_{2\ell}^{\square}$-free $n$-vertex graph with average degree $d$ and maximum degree at most $Kd$. Then the total weight of nice homomorphic copies of $C_{2\ell}^{\square}$ with
    \begin{enumerate}[label=(\alph*)]
        \item $x'_i=x_i$ for some $2\leq i\leq \ell-2$, or
        \item $y'_i=y_i$ for some $2\leq i\leq \ell-2$
    \end{enumerate}
    is $O_{K,\ell}(nd^{2\ell-2})$.
\end{lemma}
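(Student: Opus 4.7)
The plan is to reduce to case (a); case (b) follows by the $x \leftrightarrow y$ symmetry of the definitions. Fix an index $i$ with $2 \le i \le \ell-2$. Since there are $O_\ell(1)$ choices for $i$, it suffices to show that the total weight $W_i$ of nice homomorphic $C_{2\ell}^{\square}$s with $x_i = x'_i = z$ is $O_{K,\ell}(n d^{2\ell-2})$.

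For fixed vertices $(x_0, y_0), (x_\ell, y_\ell)$ and $z$, let $g(x_0, y_0, x_\ell, y_\ell, z)$ denote the total weight of nice copies of $P_{\ell+1}^{\square}$ whose endpoint pairs are $(x_0, y_0)$ and $(x_\ell, y_\ell)$ and whose vertex at position $i$ is $z$. A nice homomorphic $C_{2\ell}^{\square}$ with $x_i = x'_i = z$ corresponds to an ordered pair of such paths (the $C_{2\ell}^{\square}$-weight equals $w(P) w(P')$), so
\[
W_i \;\le\; \sum_{x_0, y_0, x_\ell, y_\ell, z} g(x_0, y_0, x_\ell, y_\ell, z)^2 \;\le\; (\max g) \cdot \sum g.
\]
Each nice $P_{\ell+1}^{\square}$ contributes to $\sum g$ exactly once through its vertex at position $i$, so $\sum g$ equals the total weight of nice $P_{\ell+1}^{\square}$s, which is $O_{K,\ell}(n d^{\ell+1})$ by the argument in Lemma~\ref{lem:many nice paths}. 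Hence it remains to prove $\max g = O_{K,\ell}(d^{\ell-3})$.

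Split each nice $P_{\ell+1}^{\square}$ at position $i$: for each $y_i \in N(z)$, the path factors into a nice $P_{i+1}^{\square}$ from $(x_0, y_0)$ to $(z, y_i)$ and a nice $P_{\ell-i+1}^{\square}$ from $(z, y_i)$ to $(x_\ell, y_\ell)$, and the weights multiply. Letting $h^{(k)}(x_0, y_0, u, v)$ denote the total weight of nice copies of $P_{k+1}^{\square}$ from $(x_0, y_0)$ to $(u, v)$, it therefore suffices to show $h^{(k)}(x_0, y_0, u, v) \le O_{K,\ell}(d^{k-2})$ for all $k \ge 2$. We prove this by induction on $k$. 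The base case $k = 2$ is Lemma~\ref{lem:count nonrich extensions} applied to the outer 4-tuple $(x_0, y_0, u, v)$, which is non-rich by niceness. For the inductive step, fix the penultimate vertices $(x_{k-1}, y_{k-1})$ with $x_{k-1} \in N(u)$ and $y_{k-1} \in N(v) \cap N(x_{k-1})$, giving
\[
h^{(k)}(x_0, y_0, u, v) \;\le\; \sum_{x_{k-1}, y_{k-1}} \frac{h^{(k-1)}(x_0, y_0, x_{k-1}, y_{k-1})}{\max(d(x_{k-1}, v), d^2/n)}.
\]
Applying the inductive hypothesis and the standard estimate $\sum_{x_{k-1} \in N(u)} d(x_{k-1}, v) / \max(d(x_{k-1}, v), d^2/n) \le |N(u)| \le Kd$ (after summing out $y_{k-1}$) completes the induction.

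Combining everything, $g \le Kd \cdot O(d^{i-2}) \cdot O(d^{\ell-i-2}) = O(d^{\ell-3})$, and so $W_i \le O(d^{\ell-3}) \cdot O(n d^{\ell+1}) = O(n d^{2\ell-2})$, as desired. The key subtlety is the observation that niceness is a local property (its codegree bounds and non-richness conditions each involve only neighbouring positions), so sub-paths of nice paths remain nice; this legitimizes both the decomposition at position $i$ and the inductive application of Lemma~\ref{lem:count nonrich extensions} as the base case.
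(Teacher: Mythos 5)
Your proof is correct and follows essentially the same route as the paper's: both bound the total weight by (the total weight of all nice copies of $P_{\ell+1}^{\square}$) times (the maximum conditional weight of a second nice path sharing the endpoints and the repeated interior vertex), and both obtain the $O_{K,\ell}(d^{\ell-3})$ conditional bound from a telescoping product of per-position factors of $Kd$ with one $O(1)$ saving per half supplied by Lemma~\ref{lem:count nonrich extensions} — your $\sum g^2\le(\max g)(\sum g)$ framing and the induction on $h^{(k)}$ are a repackaging of the paper's direct extension count via Lemma~\ref{lem:count extensions}. One small correction: the upper bound $\sum g=O_{K,\ell}(nd^{\ell+1})$ should be attributed to repeated applications of Lemma~\ref{lem:count extensions}(i), not to Lemma~\ref{lem:many nice paths}, which only provides a lower bound.
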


\begin{proof}
By repeated applications of (i) from Lemma \ref{lem:count extensions}, the total weight of possible copies of $P_{\ell+1}^{\square}$ formed by all the $x_k,y_k$ for $0\leq k\leq \ell$ is $O_K(nd^{\ell+1})$. Since $x'_i=x_i$ or $y'_i=y_i$, this already determines one of $x'_i$ and $y'_i$. The other one can be chosen in at most $Kd$ ways. By Lemma \ref{lem:count extensions} (i), given $x'_0$ and $y'_0$, the total weight of the possible $4$-cycles with vertices $x'_{0},y'_{0},x'_{1},y'_{1}$ is at most $Kd$. Similarly, for any $0\leq k\leq i-3$, given $x'_k$ and $y'_k$, the total weight of the possible $4$-cycles with vertices $x'_{k},y'_{k},x'_{k+1},y'_{k+1}$ is at most $Kd$. By Lemma \ref{lem:count nonrich extensions}, given $x'_{i-2},y'_{i-2},x'_i,y'_i$, the total weight of the possible copies of $P_{3}^{\square}$ formed by $x'_{i-2},y'_{i-2},x'_{i-1},y'_{i-1},x'_i,y'_i$ is at most $8\ell$. Again by Lemma \ref{lem:count extensions} (i), for any $i\leq k\leq \ell-3$, given $x'_k$ and $y'_k$, the total weight of the possible $4$-cycles with vertices $x'_{k},y'_{k},x'_{k+1},y'_{k+1}$ is at most $Kd$. Finally, by Lemma \ref{lem:count nonrich extensions}, given $x'_{\ell-2},y'_{\ell-2},x'_{\ell},y'_{\ell}$, the total weight of the possible copies of $P_{3}^{\square}$ formed by $x'_{\ell-2},y'_{\ell-2},x'_{\ell-1},y'_{\ell-1},x'_{\ell},y'_{\ell}$ is at most $8\ell$. Overall, we obtain an upper bound $O_{K}(nd^{\ell+1}\cdot (Kd)^{\ell-3}\cdot (8\ell)^2)$, which is indeed of the form $O_{K,\ell}(nd^{2\ell-2})$.
\end{proof}

We now bound the total weight of those nice homomorphic copies of $C_{2\ell}^{\square}$ which have at least two ``degeneracies", but are not covered by Lemma \ref{lem:internal vertex}.

\begin{lemma} \label{lem:two degeneracies}
    Let $\ell\geq 4$ and let $G$ be an $n$-vertex graph with average degree $d$ and maximum degree at most $Kd$. Then the total weight of nice homomorphic copies of $C_{2\ell}^{\square}$ with
    \begin{enumerate}[label=(\alph*)]
        \item $x'_1=x_1$ and $x'_{\ell-1}=x_{\ell-1}$, or
        \item $x'_1=x_1$ and $y'_{\ell-1}=y_{\ell-1}$, or
        \item $x'_1=x_1$ and $y'_1=y_1$, or
        \item $x'_{\ell-1}=x_{\ell-1}$ and $y'_{1}=y_{1}$, or
        \item $x'_{\ell-1}=x_{\ell-1}$ and $y'_{\ell-1}=y_{\ell-1}$, or
        \item $y'_1=y_1$ and $y'_{\ell-1}=y_{\ell-1}$
    \end{enumerate}
    is $O_{K,\ell}(nd^{2\ell-2})$.
\end{lemma}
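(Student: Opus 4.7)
The plan is to mimic the strategy of Lemma~\ref{lem:internal vertex}: fix a nice copy of $P_{\ell+1}^{\square}$ as the first path, whose total weight is $O_K(nd^{\ell+1})$ by repeated applications of Lemma~\ref{lem:count extensions}(i), and for each of the six cases bound the weighted number of nice copies of $P_{\ell+1}^{\square}$ serving as the second path subject to the given two degeneracy constraints. The main new feature compared to Lemma~\ref{lem:internal vertex} is that the degeneracies now sit near the boundary (at positions $i=1$ or $i=\ell-1$), so there is no ``middle'' fixed column around which to apply Lemma~\ref{lem:count nonrich extensions} symmetrically. Instead, we handle the boundary factors via the codegree bound $C_0d^{1/2}$ from niceness (using parts (iii)--(vi) of Lemma~\ref{lem:count extensions}), and apply Lemma~\ref{lem:count nonrich extensions} only once, to bridge two weight factors at the opposite end.

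For concreteness, in case (a) ($x_1'=x_1$ and $x_{\ell-1}'=x_{\ell-1}$), I build the second path in five stages and multiply the weight contributions: (i) sum over $y_1'\in N(y_0)\cap N(x_1)$ via Lemma~\ref{lem:count extensions}(iii), contributing $\le C_0d^{1/2}$ and covering the weight factor for $i=1$ (using $d(x_1,y_0)\le C_0d^{1/2}$ from niceness of the first path); (ii) extend forward by $4$-cycles for $k=2,\ldots,\ell-3$ via Lemma~\ref{lem:count extensions}(i), giving $(Kd)^{\ell-4}$ and covering factors $i=2,\ldots,\ell-3$; (iii) sum over $y_{\ell-1}'\in N(x_{\ell-1})\cap N(y_\ell)$, with at most $C_0d^{1/2}$ choices by niceness; (iv) apply Lemma~\ref{lem:count nonrich extensions} to bridge $(x_{\ell-3}',y_{\ell-3}')$ to $(x_{\ell-1},y_{\ell-1}')$ through $(x_{\ell-2}',y_{\ell-2}')$, with non-richness supplied by the $j=\ell-1$ case of the niceness condition for the second path, contributing $\le 8\ell$ and covering factors $i=\ell-2$ and $\ell-1$; (v) include the fixed final factor $i=\ell$, namely $1/\max(d(x_{\ell-1},y_\ell),d^2/n)\le n/d^2$. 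The product is $O_{K,\ell}(nd^{\ell-5})$, so the total weight in case (a) is at most $O_K(nd^{\ell+1})\cdot O_{K,\ell}(nd^{\ell-5})=O_{K,\ell}(n^2d^{2\ell-4})$. Since the lemma is applied in the regime $d=\Omega(\sqrt{n})$ (so that $n/d^2=O(1)$), this simplifies to $O_{K,\ell}(nd^{2\ell-2})$, as required.

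For cases (c) and (e) (both constraints at the same column, namely position $1$ or $\ell-1$), the corresponding $4$-cycle of the second path is completely determined, contributing a fixed factor $\le n/d^2$; the remainder of the second path is handled by a forward build and a single bridge via Lemma~\ref{lem:count nonrich extensions} at the opposite end, giving the same order-of-magnitude bound. Cases (b), (d), (f) follow from (a), (c), (e) via the $x\leftrightarrow y$ symmetry (with parts (iv), (v), (vi) of Lemma~\ref{lem:count extensions} in place of (iii)). The main obstacle is that, unlike in Lemma~\ref{lem:internal vertex}, Lemma~\ref{lem:count nonrich extensions} cannot be applied on both sides at once: the fixed boundary edge $(x_0,y_0)$ or $(x_\ell,y_\ell)$ is adjacent to a still-free edge $(x_2',y_2')$ or $(x_{\ell-2}',y_{\ell-2}')$, and hence cannot serve as the second endpoint of a bridge (whereas in Lemma~\ref{lem:internal vertex} the fixed middle column supplies such endpoints on both sides). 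This forces the boundary side to be treated with the niceness codegree bound, producing an apparent $n/d^2$ loss that is absorbed by the regime $d=\Omega(\sqrt{n})$ under which the lemma is invoked.
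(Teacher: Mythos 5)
Your overall strategy is the same as the paper's: bound the first nice copy of $P_{\ell+1}^{\square}$ by $O_K(nd^{\ell+1})$ via Lemma~\ref{lem:count extensions}(i), then rebuild the second path using the boundary codegree bounds from niceness together with a single application of Lemma~\ref{lem:count nonrich extensions} at the far end. However, there is a quantitative slippage in your accounting at the degenerate columns that means you do not quite prove the lemma as stated. In case (a) you bound the number of choices of $y'_{\ell-1}$ by $C_0d^{1/2}$ and, \emph{separately}, bound the weight factor for $i=\ell$ by $n/d^2$; this yields $O_{K,\ell}(n^2d^{2\ell-4})$, which equals $O_{K,\ell}(nd^{2\ell-2})$ only if $d=\Omega(n^{1/2})$ — a hypothesis the lemma does not contain (it assumes only average degree $d$ and maximum degree $Kd$). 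The paper avoids this loss by pairing the two quantities: this is exactly what Lemma~\ref{lem:count extensions}(v) does, since the number of choices of $y'_{\ell-1}$ is at most $d(x_{\ell-1},y_\ell)$ while each resulting $4$-cycle has weight $1/\max(d(x_{\ell-1},y_\ell),d^2/n)\le 1$ (the relevant codegree is at least $2$, as the two paths of the $4$-cycle witness common neighbours), so the combined contribution is at most $C_0d^{1/2}$ with no extra $n/d^2$. The paper's tally for (a) is then $O_K\bigl(nd^{\ell+1}\cdot(C_0d^{1/2})^2\cdot(Kd)^{\ell-4}\cdot 8\ell\bigr)=O_{K,\ell}(nd^{2\ell-2})$ unconditionally. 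The same remark applies to your treatment of cases (c) and (e): the fully determined $4$-cycle has weight at most $1$, not merely at most $n/d^2$. With these replacements your argument goes through verbatim and matches the paper's; as written, it establishes the lemma only under the additional assumption $d=\Omega(n^{1/2})$ (which does suffice for the application, as you note, but is not what the statement asserts).
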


\begin{proof}
All of these statements follow from repeated applications of Lemma \ref{lem:count extensions} and one application of Lemma \ref{lem:count nonrich extensions}. Since they are all quite similar and straightforward, we only prove (a).

By repeated applications of (i) from Lemma \ref{lem:count extensions}, the total weight of possible copies of $P_{\ell+1}^{\square}$ formed by all the $x_k,y_k$ for $0\leq k\leq \ell$ is $O_K(nd^{\ell+1})$.
By Lemma \ref{lem:count extensions} (iii), given $x'_0$, $y'_0$ and the vertex $x'_1=x_1$, the total weight of the possible $4$-cycles with vertices $x'_{0},y'_{0},x'_{1},y'_{1}$ is at most $C_0d^{1/2}$. By Lemma \ref{lem:count extensions} (v), given $x'_\ell$, $y'_\ell$ and the vertex $x'_{\ell-1}=x_{\ell-1}$, the total weight of the possible $4$-cycles with vertices $x'_{\ell-1},y'_{\ell-1},x'_{\ell},y'_{\ell}$ is at most $C_0d^{1/2}$. Moreover, by Lemma \ref{lem:count extensions} (i), for any $1\leq k\leq \ell-4$, given $x'_k$ and $y'_k$, the total weight of the possible $4$-cycles with vertices $x'_{k},y'_{k},x'_{k+1},y'_{k+1}$ is at most $Kd$.
Finally, by Lemma \ref{lem:count nonrich extensions}, given $x'_{\ell-3},y'_{\ell-3},x'_{\ell-1},y'_{\ell-1}$, the total weight of the possible copies of $P_{3}^{\square}$ formed by $x'_{\ell-3},y'_{\ell-3},x'_{\ell-2},y'_{\ell-2},x'_{\ell-1},y'_{\ell-1}$ is at most $8\ell$. Overall, we obtain an upper bound $O_{K}(nd^{\ell+1}\cdot (C_0d^{1/2})^2\cdot (Kd)^{\ell-4}\cdot 8\ell)$, which is indeed of the form $O_{K,\ell}(nd^{2\ell-2})$.
\end{proof}

It remains to bound the total weight of those nice homomorphic copies of $C_{2\ell}^{\square}$ which have a single degeneracy, at $x'_1$, $y'_1$, $x'_{\ell-1}$ or $y'_{\ell-1}$.

\begin{lemma} \label{lem:one degeneracy}
    Let $\ell\geq 4$ and let $G$ be a $C_{2\ell}^{\square}$-free $n$-vertex graph with average degree $d$ and maximum degree at most $Kd$. Then the total weight of nice homomorphic copies of $C_{2\ell}^{\square}$ in which for all $1\leq i\leq \ell-1$, we have $x'_i\neq x_i$ and $y'_i\neq y_i$ except we have precisely one of
    \begin{enumerate}[label=(\alph*)]
        \item $x'_1=x_1$, or
        \item $y'_{1}=y_{1}$, or
        \item $x'_{\ell-1}=x_{\ell-1}$, or
        \item $y'_{\ell-1}=y_{\ell-1}$
    \end{enumerate}
    is $O_{K,\ell}(nd^{2\ell-2})$.
\end{lemma}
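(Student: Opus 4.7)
By the natural symmetries of the prism graph (swapping positions $1$ and $\ell-1$, and swapping $x$ and $y$), it suffices to prove case~(a), where the single coincidence is $x'_1 = x_1$. Exactly as in the proofs of Lemma~\ref{lem:internal vertex} and Lemma~\ref{lem:two degeneracies}, repeated application of Lemma~\ref{lem:count extensions}(i) shows that the total weight of first copies of $P_{\ell+1}^{\square}$ is $O_K(nd^{\ell+1})$, so it remains to show that the analogous weighted sum over the second copy of $P_{\ell+1}^{\square}$ (with $(x'_0, y'_0) = (x_0, y_0)$, $(x'_\ell, y'_\ell) = (x_\ell, y_\ell)$, and $x'_1 = x_1$ all fixed) is $O_{K,\ell}(d^{\ell-3})$.

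The main obstacle is that the naive analogue of the Lemma~\ref{lem:two degeneracies} argument yields savings of only one factor of $\sqrt{d}$ (at cycle~$1$), and we appear to miss the second $\sqrt{d}$ that came from having a coincidence at position $\ell-1$ in Lemma~\ref{lem:two degeneracies}. The key observation is that the niceness of the second $P_{\ell+1}^{\square}$ already provides the codegree bound $d(x'_\ell, y'_{\ell-1}) \le C_0 d^{1/2}$, which is exactly the hypothesis of Lemma~\ref{lem:count extensions}(iii) applied at cycle $\ell$ with $x'_\ell$ fixed. This substitutes for a would-be coincidence at $x'_{\ell-1}$ and recovers the missing $\sqrt{d}$.

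Concretely, I plan to bound the second $P_{\ell+1}^{\square}$ contribution by a product of four local estimates corresponding to the following nested summations (ordered outermost to innermost):
\begin{enumerate}
    \item $\sum_{y'_1} \frac{1}{\max(d(x'_0,y'_1), d^2/n)} \le C_0 d^{1/2}$, by Lemma~\ref{lem:count extensions}(iii) at cycle~$1$ with $x'_0, y'_0, x'_1 = x_1$ fixed;
    \item for each $i = 2, \ldots, \ell-3$, $\sum_{(x'_i, y'_i)} \frac{1}{\max(d(x'_{i-1},y'_i), d^2/n)} \le Kd$ by Lemma~\ref{lem:count extensions}(i) given $(x'_{i-1}, y'_{i-1})$, contributing $(Kd)^{\ell-4}$ in total;
    \item $\sum_{(x'_{\ell-1}, y'_{\ell-1})} \frac{1}{\max(d(x'_{\ell-1},y'_\ell), d^2/n)} \le C_0 d^{1/2}$ by Lemma~\ref{lem:count extensions}(iii) at cycle~$\ell$ with $x'_\ell$ fixed;
    \item $\sum_{(x'_{\ell-2}, y'_{\ell-2})} \frac{1}{\max(d(x'_{\ell-3},y'_{\ell-2}), d^2/n)\max(d(x'_{\ell-2},y'_{\ell-1}), d^2/n)} \le 8\ell$ by Lemma~\ref{lem:count nonrich extensions} at position~$\ell-1$, using that $(x'_{\ell-3}, y'_{\ell-3}, x'_{\ell-1}, y'_{\ell-1})$ is not rich by niceness.
\end{enumerate}
Together these four estimates consume all $\ell$ factors $\frac{1}{\max(d(x'_{i-1},y'_i), d^2/n)}$ and sum every free variable $y'_1, (x'_2, y'_2), \ldots, (x'_{\ell-1}, y'_{\ell-1})$. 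Their product is $C_0 d^{1/2} \cdot (Kd)^{\ell-4} \cdot C_0 d^{1/2} \cdot 8\ell = O_{K,\ell}(d^{\ell-3})$; combining with the first-$P^{\square}$ bound $O_K(nd^{\ell+1})$ yields the claimed $O_{K,\ell}(nd^{2\ell-2})$. The cases~(b), (c), (d) are handled by symmetric modifications, using parts~(iv), (v), (vi) of Lemma~\ref{lem:count extensions} and Lemma~\ref{lem:count nonrich extensions} applied at the appropriate symmetric position.
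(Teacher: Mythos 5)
Your decomposition is structurally sound (all $2\ell$ weight factors and all free vertices are accounted for), and steps 1, 2 and 4 are correct. The problem is step 3, which is exactly where you claim to recover the ``missing $\sqrt{d}$'' --- and that claim is unjustified. In step 3 both endpoints $x'_\ell$ and $y'_\ell$ of the $\ell$-th rung are fixed and you sum over an entire free rung $(x'_{\ell-1},y'_{\ell-1})$. Lemma~\ref{lem:count extensions}(iii) does not apply in that direction: it fixes the edge $x'_{\ell-1}y'_{\ell-1}$ and the single vertex $x'_\ell$, and bounds the number of choices of the \emph{one} remaining vertex $y'_\ell$ by $d(x'_\ell,y'_{\ell-1})\le C_0d^{1/2}$. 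When the free object is a whole rung, the relevant estimate is Lemma~\ref{lem:count extensions}(ii), which gives $Kd$, not $C_0d^{1/2}$: choosing $x'_{\ell-1}$ as a neighbour of $x'_\ell$ costs $Kd$, and the $\le d(x'_{\ell-1},y'_\ell)$ choices of $y'_{\ell-1}$ are then exactly cancelled by the weight factor. The niceness condition $d(x'_\ell,y'_{\ell-1})\le C_0d^{1/2}$ does not rescue this: it limits the number of $x'_{\ell-1}$ per fixed $y'_{\ell-1}$, which does not interact usefully with the weight factor $1/\max(d(x'_{\ell-1},y'_\ell),d^2/n)$; routing the sum that way gives at most $Kd\cdot C_0d^{1/2}\cdot n/d^2=KC_0nd^{-1/2}$, which is even larger than $Kd$ when $d=\Theta(n^{1/2})$. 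With $Kd$ in place of $C_0d^{1/2}$ your product becomes $O_{K,\ell}(d^{\ell-5/2})$ and the final bound $O_{K,\ell}(nd^{2\ell-3/2})$, which is off by a factor $d^{1/2}$ and is insufficient downstream: it no longer beats the lower bound $\Omega_\ell(d^{2\ell})$ of Lemma~\ref{lem:lower bound nice} when $d=\Theta(n^{1/2})$.

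A telltale sign is that your argument never uses the hypothesis that $G$ is $C_{2\ell}^{\square}$-free, even though the statement includes it (in contrast to Lemma~\ref{lem:two degeneracies}, which omits it). The paper recovers the missing saving precisely from this hypothesis. It first bounds by $O_{K,\ell}(nd^{\ell+1/2})$ the weight of the configuration $x_0,y_0,\dots,x_{\ell-1},y_{\ell-1},x'_1,y'_1$, and then observes that the remaining vertices $x_\ell,y_\ell,x'_2,y'_2,\dots,x'_{\ell-1},y'_{\ell-1}$ span a copy of $P_{\ell+1}^{\square}$ whose end rungs are the fixed $x_{\ell-1}y_{\ell-1}$ and $x'_1y'_1$; two such copies meeting only in their end rungs would glue into a $C_{2\ell}^{\square}$, so all of them must intersect a fixed set of at most $2\ell$ vertices. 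Paying an $O_\ell(1)$ factor to fix one additional vertex converts one $Kd$ factor into a $C_0d^{1/2}$ factor via Lemma~\ref{lem:count extensions}(iii)--(vi), yielding $O_{K,\ell}(d^{\ell-5/2})$ for this part and hence $O_{K,\ell}(nd^{2\ell-2})$ overall. Some version of this intersection argument is needed; the local codegree bookkeeping in your step 3 cannot close the gap on its own.
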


\begin{proof}
We only prove that the total weight of nice homomorphic copies of $C_{2\ell}^{\square}$ with $x'_1=x_1$ but $x'_i\neq x_i$ for all $2\leq i\leq \ell-1$ and $y'_i\neq y_i$ for all $1\leq i\leq \ell-1$ is at most $O_{K,\ell}(nd^{2\ell-2})$. (The upper bounds for the cases (b), (c) and (d) can be proven almost identically.)

Consistently with the convention in this section, the weight of a subgraph of a nice homomorphic copy of $C_{2\ell}^{\square}$ is the product of the weights of the individual 4-cycles that it contains. With this definition and using Lemma \ref{lem:count extensions} (i), the total weight of the possible copies of $P_{\ell}^{\square}$ with vertices $x_0,y_0,x_1,y_1,\dots,x_{\ell-1},y_{\ell-1}$ is at most $O_K(nd^{\ell})$. By the definition of nice copies of $C_{2\ell}^{\square}$ and Lemma \ref{lem:count extensions} (iii), given $x'_0$, $y'_0$ and the vertex $x'_1=x_1$, the total weight of the possible $4$-cycles with vertices $x'_{0},y'_{0},x'_{1},y'_{1}$ is at most $C_0d^{1/2}$. Hence, the total weight of the possible subgraphs induced by the vertices $x_0,y_0,\dots,x_{\ell-1},y_{\ell-1},x'_1,y'_1$ is $O_{K,\ell}(nd^{\ell+1/2})$. It is therefore sufficient to prove that the total weight of the possible subgraphs spanned by $x_{\ell-1},y_{\ell-1},x_{\ell},y_{\ell},x'_1,y'_1,x'_2,y'_2,\dots,x'_{\ell-1},y'_{\ell-1}$ is $O_{K,\ell}(d^{\ell-5/2})$ for any fixed choices of $x_{\ell-1},y_{\ell-1},x'_1,y'_1$. Observe that these subgraphs are isomorphic to $P_{\ell+1}^{\square}$ (with a non-standard labelling), so by the $C_{2\ell}^{\square}$-freeness assumption, any two such subgraphs which correspond to the same $x_{\ell-1},y_{\ell-1},x'_1,y'_1$ must have another vertex in common (in addition to $x_{\ell-1},y_{\ell-1},x'_1,y'_1$). This implies that for a given $x_{\ell-1},y_{\ell-1},x'_1,y'_1$, all possible sets $\{x_{\ell},y_{\ell},x'_2,y'_2,\dots,x'_{\ell-1},y'_{\ell-1}\}$ intersect a fixed set of size at most $2\ell$. However, as $\ell\geq 4$, it is easy to show by $\ell-3$ applications of Lemma \ref{lem:count extensions} (i) and (ii), one application of Lemma \ref{lem:count extensions} (iii), (iv), (v) or (vi) and one application of Lemma \ref{lem:count nonrich extensions} that for given $x_{\ell-1},y_{\ell-1},x'_1,y'_1$ and given one more vertex from the set $\{x_{\ell},y_{\ell},x'_2,y'_2,\dots,x'_{\ell-1},y'_{\ell-1}\}$, the total weight of all possible subgraphs spanned by $x_{\ell-1},y_{\ell-1},x_{\ell},y_{\ell},x'_1,y'_1,x'_2,y'_2,\dots,x'_{\ell-1},y'_{\ell-1}$ is $O_{K,\ell}((Kd)^{\ell-3}\cdot C_0d^{1/2}\cdot 8\ell)$, which is indeed of the form $O_{K,\ell}(d^{\ell-5/2})$.
\end{proof}

Combining Lemmas \ref{lem:internal vertex}, \ref{lem:two degeneracies} and \ref{lem:one degeneracy}, it follows that for any $\ell\geq 4$, if $G$ is a $C_{2\ell}^{\square}$-free $n$-vertex graph with average degree $d$ and maximum degree at most $Kd$, then the total weight of nice homomorphic copies of $C_{2\ell}^{\square}$ is $O_{K,\ell}(nd^{2\ell-2})$. This, combined with Lemma \ref{lem:lower bound nice}, implies the following result.

\begin{lemma}
    Let $\ell\geq 4$. If $C$ is sufficiently large compared to $K$ and $\ell$, then any clean, bipartite, $n$-vertex graph with average degree $d\geq Cn^{1/2}$ and maximum degree at most $Kd$ contains $C_{2\ell}^{\square}$ as a subgraph.
\end{lemma}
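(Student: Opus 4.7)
I would prove this by contradiction, combining the lower bound from Lemma~\ref{lem:lower bound nice} with an upper bound obtained by summing the contributions of Lemmas~\ref{lem:internal vertex}, \ref{lem:two degeneracies}, and \ref{lem:one degeneracy}. Suppose $G$ is clean, bipartite, has $n$ vertices, average degree $d \ge Cn^{1/2}$, maximum degree at most $Kd$, and contains no copy of $C_{2\ell}^{\square}$. By Lemma~\ref{lem:lower bound nice}, the total weight of nice homomorphic copies of $C_{2\ell}^{\square}$ in $G$ is $\Omega_{\ell}(d^{2\ell})$.

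Next I would show this total weight is also $O_{K,\ell}(n d^{2\ell - 2})$. The key observation is that, since $G$ is $C_{2\ell}^{\square}$-free, every nice homomorphic copy of $C_{2\ell}^{\square}$ must have some collision $x_i = x_i'$ or $y_i = y_i'$ for some $1 \le i \le \ell - 1$; otherwise the vertices of the two component nice copies of $P_{\ell+1}^{\square}$ would be distinct apart from the four shared endpoints, so their union would be a genuine copy of $C_{2\ell}^{\square}$ in $G$. I would then partition nice homomorphic copies into three classes according to their collision pattern: those with a collision at an internal index $2 \le i \le \ell - 2$, those with no internal collision but with at least two collisions among the four boundary equalities $x_1 = x_1'$, $y_1 = y_1'$, $x_{\ell - 1} = x_{\ell - 1}'$, $y_{\ell - 1} = y_{\ell - 1}'$, and those with no internal collision and exactly one boundary collision. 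The first class is bounded by Lemma~\ref{lem:internal vertex}. The second class is covered by the six cases of Lemma~\ref{lem:two degeneracies}, which enumerate precisely the $\binom{4}{2} = 6$ pairs of boundary equalities. The third class is exactly the scope of Lemma~\ref{lem:one degeneracy}. Summing these three $O_{K,\ell}(n d^{2\ell - 2})$ contributions yields the desired upper bound.

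Comparing the two bounds forces $d^{2} = O_{K, \ell}(n)$, so choosing $C$ sufficiently large in terms of $K$ and $\ell$ contradicts $d \ge C n^{1/2}$ and finishes the proof. The heavy lifting has already been carried out in the weighted-counting lemmas preceding this statement; the only thing here that needs care is the verification that the three lemmas really do cover every possible collision pattern in a nice homomorphic copy inside a $C_{2\ell}^{\square}$-free graph. This is an elementary enumeration, so I do not expect the finishing step to present any serious obstacle beyond book-keeping.
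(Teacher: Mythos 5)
Your proposal is correct and follows essentially the same route as the paper: the paper likewise deduces the upper bound $O_{K,\ell}(nd^{2\ell-2})$ on the total weight of nice homomorphic copies by combining Lemmas~\ref{lem:internal vertex}, \ref{lem:two degeneracies} and \ref{lem:one degeneracy} (whose hypotheses together cover every collision pattern forced by $C_{2\ell}^{\square}$-freeness), and then compares this with the lower bound $\Omega_{\ell}(d^{2\ell})$ from Lemma~\ref{lem:lower bound nice} to conclude $d^2=O_{K,\ell}(n)$, contradicting $d\geq Cn^{1/2}$ for $C$ large.
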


Using Lemmas~\ref{lem:JiangSeiverK-almost} and~\ref{lem:cleaned subgraph}, this proves Theorem~\ref{thm:Main1}.

\section{Concluding remarks}\label{sec:conclude}
In this paper, we gave optimal bounds for the order of magnitude of the extremal number for several classes of bipartite graphs arising from geometric shapes, including prisms, grids, hexagonal tilings and certain quadrangulations of the cylinder and the torus. Our method can be used to give tight bounds for several other families of graphs as well, some of which we shall briefly discuss here.

Let $R_{s,t}$ be the $s\times t$ hexagonal grid, the graph obtained from replacing each copy of unit $C_{4}$ in the $s\times t$ normal grid with a copy of $C_{6}$ in the way depicted in Figure~\ref{fig:RiEmbedding}. Lemma~\ref{lem:findgoodpaths} implies the following tight upper bound for $\textup{ex}(n,R_{s,t})$.

\begin{theorem}\label{thm:Rtt}
    For any integers $s,t\ge 2$, we have
\begin{equation*}
    \textup{ex}(n,R_{s,t})=\Theta(n^{4/3}).
\end{equation*}

\end{theorem}

\begin{figure}[t]
   \centering
   \includegraphics[width=12cm]{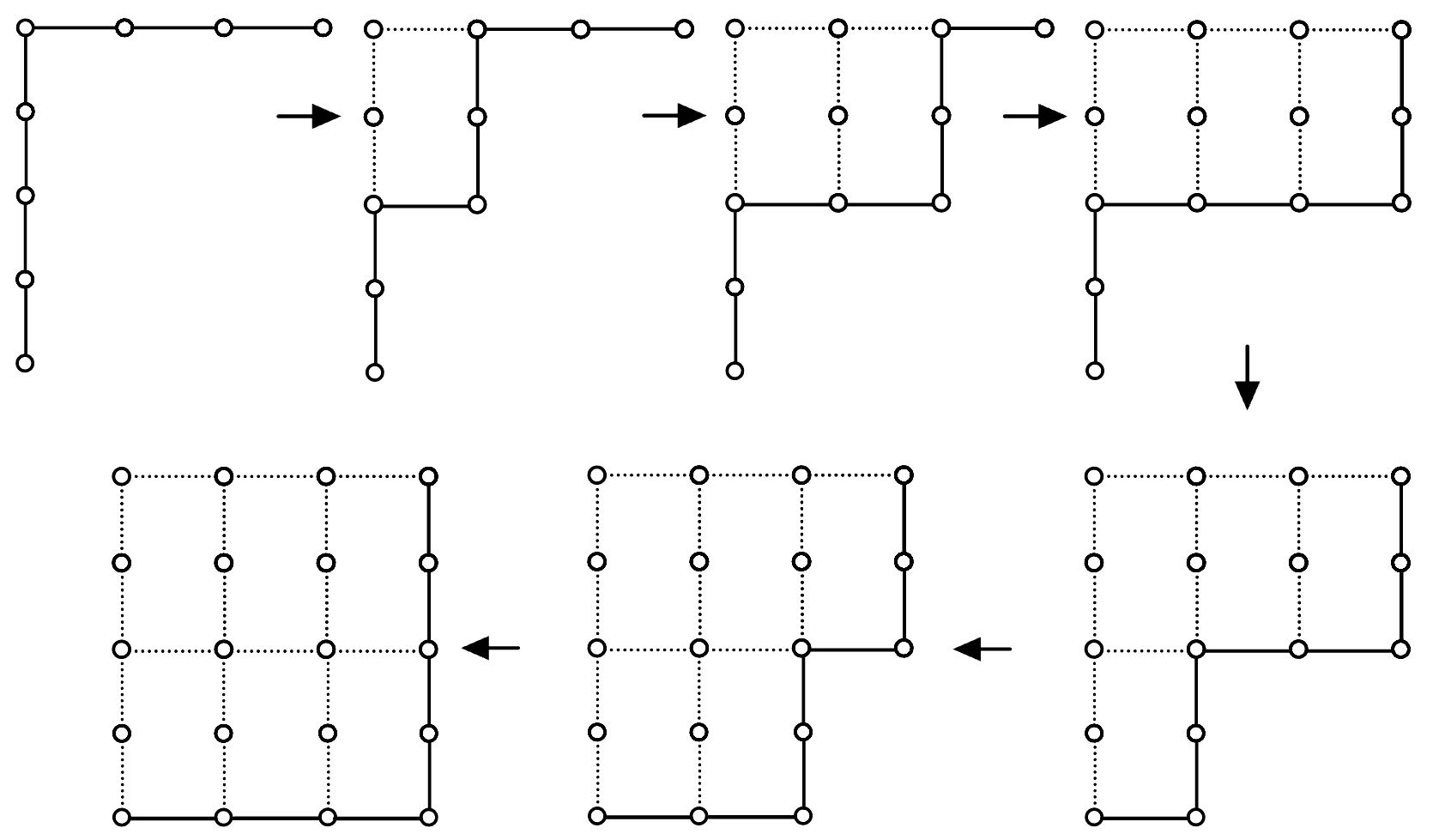}
   \caption{The process of embedding $R_{4,3}$}
   \label{fig:RiEmbedding}
\end{figure}

As a corollary, we obtain a tight upper bound for the Tur\'{a}n number of the rooted $2$-blowup of a certain balanced rooted tree. Let $T_{s,2s-1}$ be the tree obtained from an $s$-vertex path $x_{1}x_{2}\cdots x_{s}$ by adding $s$ new vertices $y_{1},y_{2}\ldots y_{s}$ and $s$ new edges $x_{1}y_{1},x_{2}y_{2}\ldots,x_{s}y_{s}$. The rooted $\ell$-blowup of $T_{s,2s-1}$, denoted $T_{s,2s-1}^{\ell}$, is obtained by taking $\ell$ vertex-disjoint copies of $T_{s,2s-1}$ and identifying the $\ell$ copies of $y_i$ for each $1\le i\le s$. Balanced rooted trees played a key role in the resolution of the rational exponents conjecture for families~\cite{2018JEMSBukh}. Later Jiang, Ma and Yepremyan~\cite{2021CPCJiang}, and Kang, Kim and Liu~\cite{2021JCTBKang} showed that $\textup{ex}(n,T_{3,5}^{\ell})=O(n^{7/5})$ and $\textup{ex}(n,T_{4,7}^{\ell})=O(n^{10/7})$, respectively, both of which are tight via the random algebraic method~\cite{2018JEMSBukh} when $\ell$ is very large. As $T_{s,2s-1}^{2}=R_{s,2}$, we have the following tight result for $T_{s,2s-1}^{2}$.

\begin{cor}\label{thm:2Blowup}
    For any integer $s\ge 2$, we have
    \begin{equation*}
        \textup{ex}(n,T_{s,2s-1}^{2})=\Theta(n^{4/3}).
    \end{equation*}
\end{cor}
Since $\ex(n,T_{s,2s-1}^{\ell})=\Omega(n^{1+\frac{s-1}{2s-1}})$ for sufficiently large $\ell$ \cite{2018JEMSBukh}, Corollary~\ref{thm:2Blowup} shows that the extremal number of $T_{s,2s-1}^{\ell}$ exhibits a phase transition phenomenon in relation to the number of blown-up copies.

Furthermore, we remark that Lemma~\ref{lem:findgoodpaths} easily implies that $\textup{ex}(n,\theta_{3,\ell})=\Theta(n^{4/3})$ for any $\ell\ge 2$, a result due to Faudree and Simonovits~\cite{1983ThetaGraph}, where $\theta_{3,\ell}$ is the graph consisting of $\ell$ internally vertex-disjoint paths of length 3 with the same endpoints.

Finally, as pointed out to us by Zach Hunter (private communication), the method applied in the proof of Theorem~\ref{thm:normal grid} can also be used to show that the extremal number of the rhombille tiling (see Figure~\ref{fig:Rhombille tiling}) is of order $n^{3/2}$. 

Our results also motivate some new directions to explore.

\begin{figure}[h]
\centering
\begin{minipage}[t]{0.56\textwidth}
\centering
\includegraphics[width=6cm]{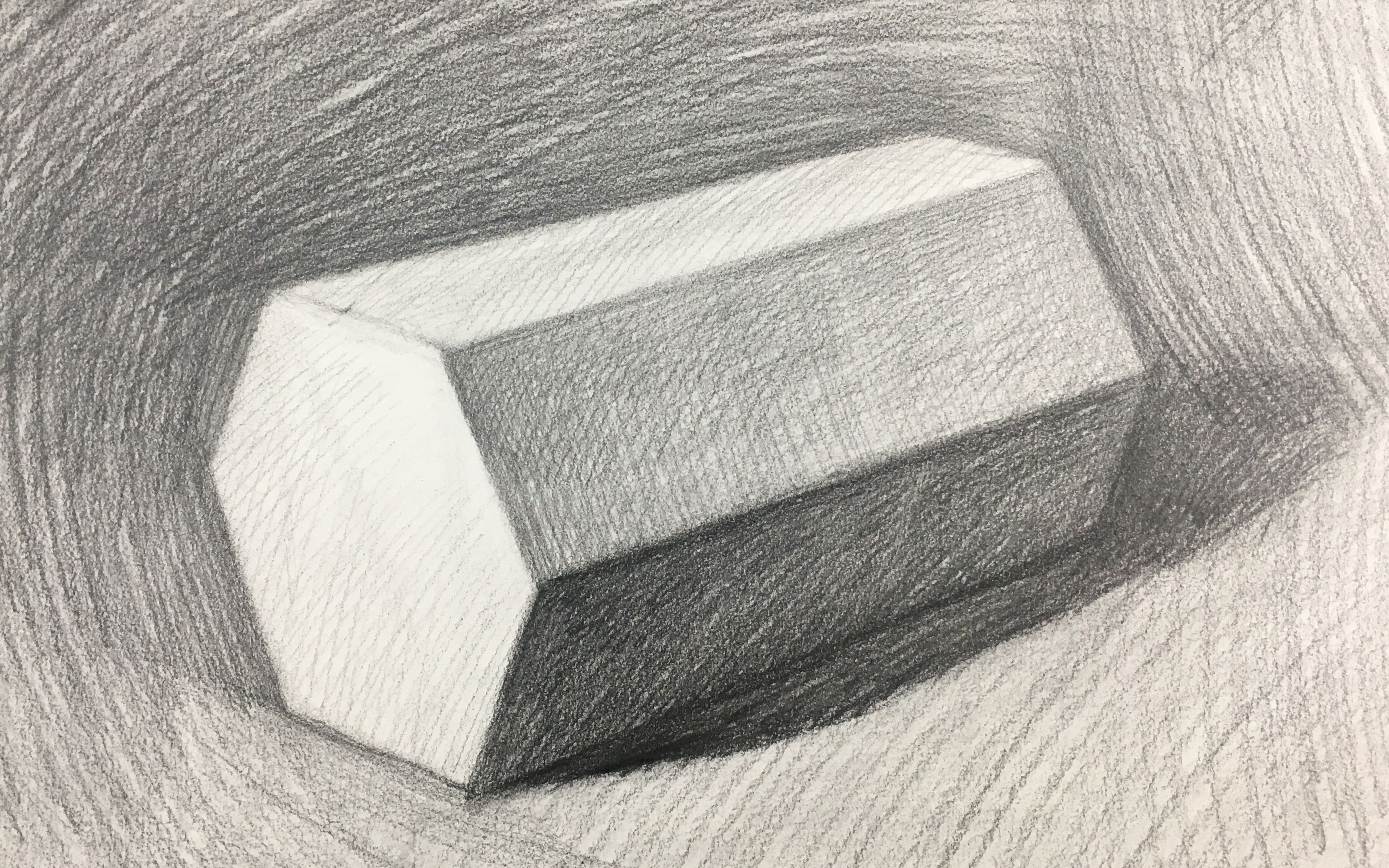}
\caption{$C_{6}^{\square}$}
\label{Hexagonal Tiling}
\end{minipage}
\hspace{-14mm}
\begin{minipage}[t]{0.4\textwidth}
\centering
\includegraphics[width=5cm]{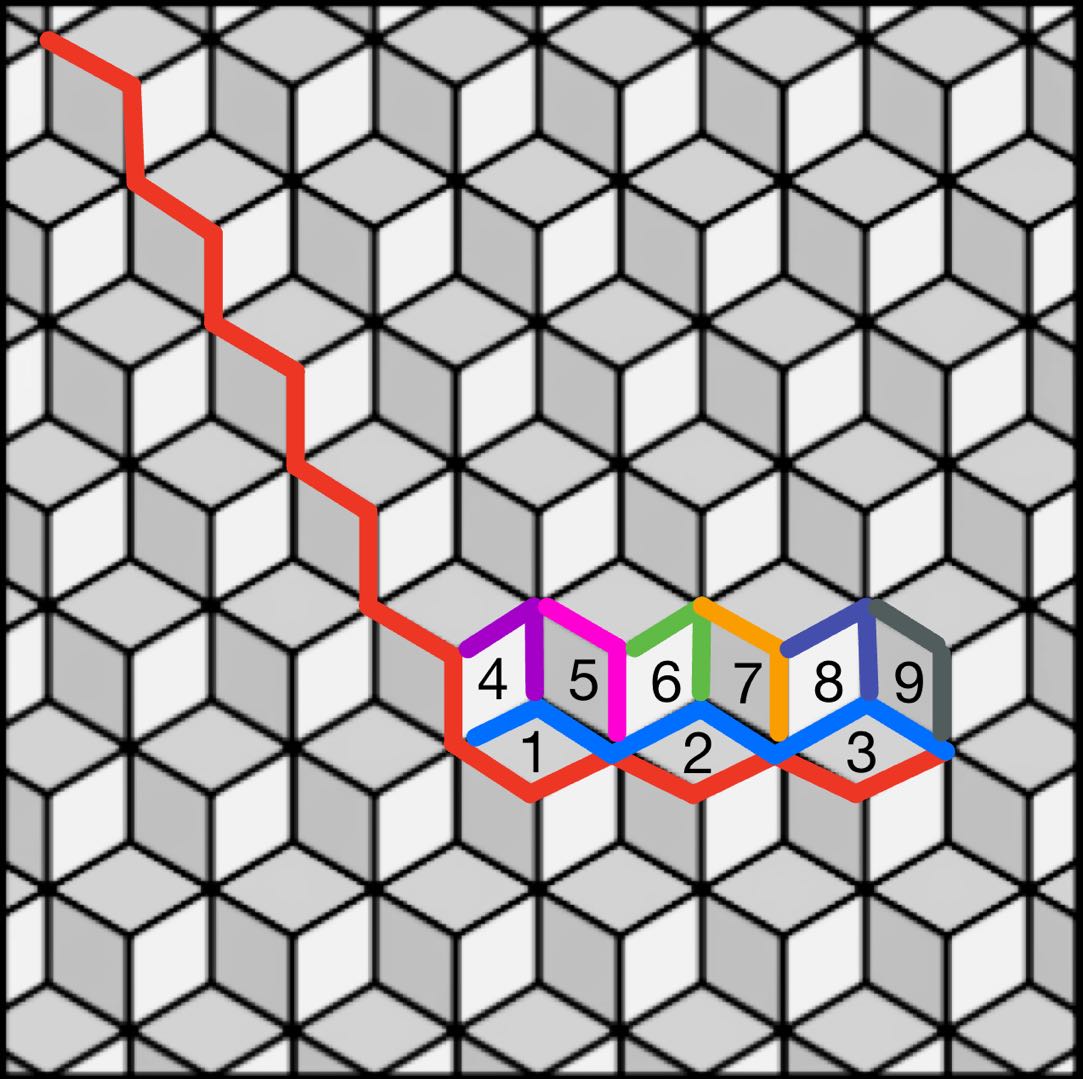}
\caption{Rhombille tiling}
\label{fig:Rhombille tiling}
\end{minipage}
\end{figure}

\begin{itemize}
    \item \textbf{Hexagonal prism $C_{6}^{\square}$}. An open problem left in this paper is determining the extremal number of $C_{6}^{\square}$. We put forward the following conjecture.
\begin{conj}
    $\ex(n,C_{6}^{\square})=\Theta(n^{3/2})$.
\end{conj}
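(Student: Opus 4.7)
Our plan is to extend the weighted homomorphism-counting framework from the proof of Theorem~\ref{thm:Main1} to the boundary case $\ell = 3$. After invoking Lemmas~\ref{lem:JiangSeiverK-almost} and~\ref{lem:cleaned subgraph}, one works with a clean, $K$-almost regular bipartite graph $G$ on $n$ vertices with average degree $d \geq Cn^{1/2}$, for $C$ large. Define nice copies of $P_4^{\square}$ and nice homomorphic copies of $C_6^{\square}$ exactly as in Section~\ref{sec:prisms}. Lemmas~\ref{lem:lower bound all}--\ref{lem:lower bound nice} then transfer verbatim and yield a lower bound $\Omega(d^6)$ on the total weight of nice homomorphic copies of $C_6^{\square}$. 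The goal is to prove the matching upper bound $O(nd^4)$ assuming $G$ is $C_6^{\square}$-free, thereby forcing a contradiction.

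The case analysis of Lemmas~\ref{lem:internal vertex},~\ref{lem:two degeneracies}, and~\ref{lem:one degeneracy} mostly carries over. The internal-vertex case is vacuous for $\ell = 3$. For the single-degeneracy case, given $x_2, y_2, x'_1, y'_1$, the residual $8$-vertex subgraph on $\{x_2, y_2, x_3, y_3, x'_1, y'_1, x'_2, y'_2\}$ is a copy of $P_4^{\square}$ with a non-standard labelling, and any two such subgraphs sharing only these four fixed vertices combine into a genuine $C_6^{\square}$ in $G$ (their top and bottom rows each form a $6$-cycle matched by the verticals); so all configurations meet a fixed set of size at most~$4$, after which one application each of Lemma~\ref{lem:count extensions}(iii)--(vi) and Lemma~\ref{lem:count nonrich extensions} produces the required $O(d^{1/2})$ bound. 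The two-degeneracy subcases~(c),~(e),~(f) of Lemma~\ref{lem:two degeneracies}, where the two paths share a single interior vertex, each reduce to one application of Lemma~\ref{lem:count nonrich extensions} to the remaining $P_3^{\square}$.

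The main obstacle lies in subcases~(a),~(b),~(d) of Lemma~\ref{lem:two degeneracies}: the two copies of $P_4^{\square}$ share the entire top path, the entire bottom path, or a ``staircase''. Consider case~(a): $x'_1=x_1$ and $x'_2=x_2$, so we must count pairs $(y'_1, y'_2) \in U \times V$ with $y'_1y'_2 \in E(G)$, where $U := N(y_0)\cap N(x_1)$ and $V := N(y_3)\cap N(x_2)$. The naive bound $|U|\cdot|V| \leq (C_0 d^{1/2})^2 = C_0^2 d$ produces only $O(n^{7/2})$ on the total weight, short of the target $O(n^3)$ by a factor of $n^{1/2}$. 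One wants to exploit $C_6^{\square}$-freeness to restrict the auxiliary bipartite graph $H$ on $U\cup V$ whose edges are the admissible pairs: a dense subgraph of $H$ aligned with the shared top path should force an honest $C_6^{\square}$ in $G$ after adjoining further common neighbours or matching edges, bounding $e(H)$ by $o(|U||V|/d^{1/2})$ on average and restoring the missing factor.

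The hardest step will be making this $C_6^{\square}$-extraction rigorous. Unlike Lemma~\ref{lem:rich with small codeg}, where a path in the auxiliary graph transparently yields a prism, here the rigidity of a shared top path prevents a single edge, $4$-cycle, or even $K_{2,2}$ in $H$ from closing up to a $C_6^{\square}$ (as is readily checked by counting degrees in the combined structure), so one likely needs either to locate a more elaborate structure in $H$ or to strengthen the definition of niceness itself. A natural candidate is to impose a ``secondary'' non-rich condition controlling the coexistence of two parallel bottom paths under a common top path, or an extra codegree constraint of the form $d(y_i, y_{i+1}) \le C_0 d^{1/2}$ inside $N(x_i)\cap N(x_{i+1})$. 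The delicate task is to choose such a refinement so that it kills the case-(a), (b), (d) contributions while preserving the $\Omega(d^6)$ lower bound of Lemma~\ref{lem:lower bound nice}; once this is done, the rest of the argument should follow the template of Section~\ref{sec:prisms} and confirm the conjecture.
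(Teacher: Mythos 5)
The statement you are asked to prove is not a theorem of the paper but a conjecture that the authors explicitly leave open: their best upper bound for $C_6^{\square}$ is $\ex(n,C_{6}^{\square})=O(n^{21/13}(\log{n})^{24/13})$, obtained in the appendix, and the main machinery of Section~\ref{sec:prisms} is only claimed for $\ell\geq 4$. Your proposal does not close this gap either. You correctly diagnose exactly where the argument for $\ell\ge 4$ breaks at $\ell=3$: in the two-degeneracy cases where the two copies of $P_4^{\square}$ share an entire top or bottom path (your cases (a), (b), (d)), the crude count of pairs $(y'_1,y'_2)$ via the codegree bound $C_0d^{1/2}$ overshoots the target $O(nd^{4})$ by a factor of roughly $d^{1/2}\approx n^{1/4}$ per coordinate, i.e.\ $n^{1/2}$ in total, and the factor $(Kd)^{\ell-4}$ in the paper's Lemma~\ref{lem:two degeneracies} becomes meaningless at $\ell=3$. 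But having identified the obstruction, you only gesture at possible fixes (``a secondary non-rich condition'', ``an extra codegree constraint'', ``locate a more elaborate structure in $H$'') without constructing any of them, without verifying that such a refinement is compatible with the lower bound of Lemma~\ref{lem:lower bound nice}, and without showing that a dense auxiliary graph $H$ on $U\times V$ actually forces a genuine $C_6^{\square}$ — indeed you concede that a single edge, $4$-cycle, or $K_{2,2}$ in $H$ does not suffice. The phrases ``should force'', ``likely needs'', and ``once this is done'' mark the missing content; what remains unproved is precisely the hard core of the problem.

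So this is a genuine gap rather than a different route: the proposal is a reasonable research plan that reproduces the authors' own framework and stalls at the same point the authors did. To be a proof it would need, at minimum, a concrete structural lemma (in the spirit of Lemma~\ref{lem:rich rare}/Lemma~\ref{lem:rich with small codeg}) showing that in a $C_6^{\square}$-free graph, for all but a negligible weight of nice $P_4^{\square}$'s, the number of parallel bottom (resp.\ top) paths over a fixed top (resp.\ bottom) path is $O(d^{1/2})$ rather than $O(d)$, together with a verification that excluding the exceptional copies still leaves $\Omega(nd^{4})$ weight of nice paths. Neither ingredient is supplied.
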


    \item \textbf{The grid}.
    We proved that (for large $n$) the extremal number of the $t\times t$ grid satisfies $\ex(n,F_{t,t})\leq 5t^{3/2}n^{3/2}$. On the other hand, the lower bound of Brada\v{c}, Janzer, Sudakov and Tomon \cite{2022TuranGrid} is $\ex(n,F_{t,t})= \Omega(t^{1/2}n^{3/2})$. It would be interesting to determine the correct dependence on $t$.

\item \textbf{Tilings of hyperbolic plane}.
While the Tur\'an problem for both the grid and the hexagonal tiling in the plane are resolved (up to multiplicative constants), it is natural to study regular tilings of hyperbolic planes built up by 4-cycles and 6-cycles. In particular, we would like to propose the order-5 square tiling and order-6 hexagonal tiling, see Figures~\ref{fig:hyperbolic} and~\ref{fig:order 6}.

\begin{figure}[h]
\centering
\begin{minipage}[t]{0.4\textwidth}
\centering
\includegraphics[width=5cm]{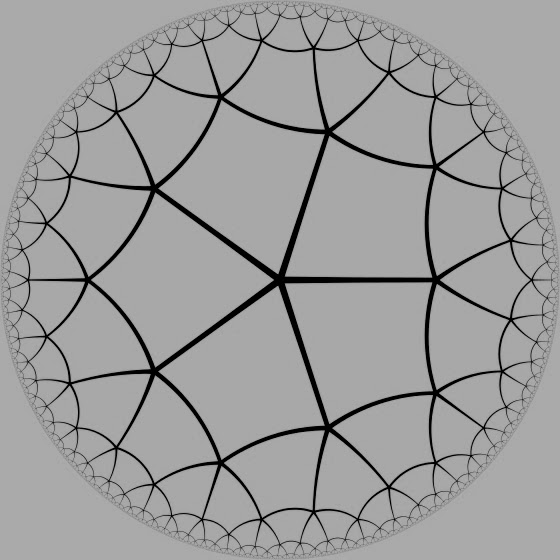}
\caption{Order-5 square tiling}
\label{fig:hyperbolic}
\end{minipage}
\begin{minipage}[t]{0.4\textwidth}
\centering
\includegraphics[width=5cm]{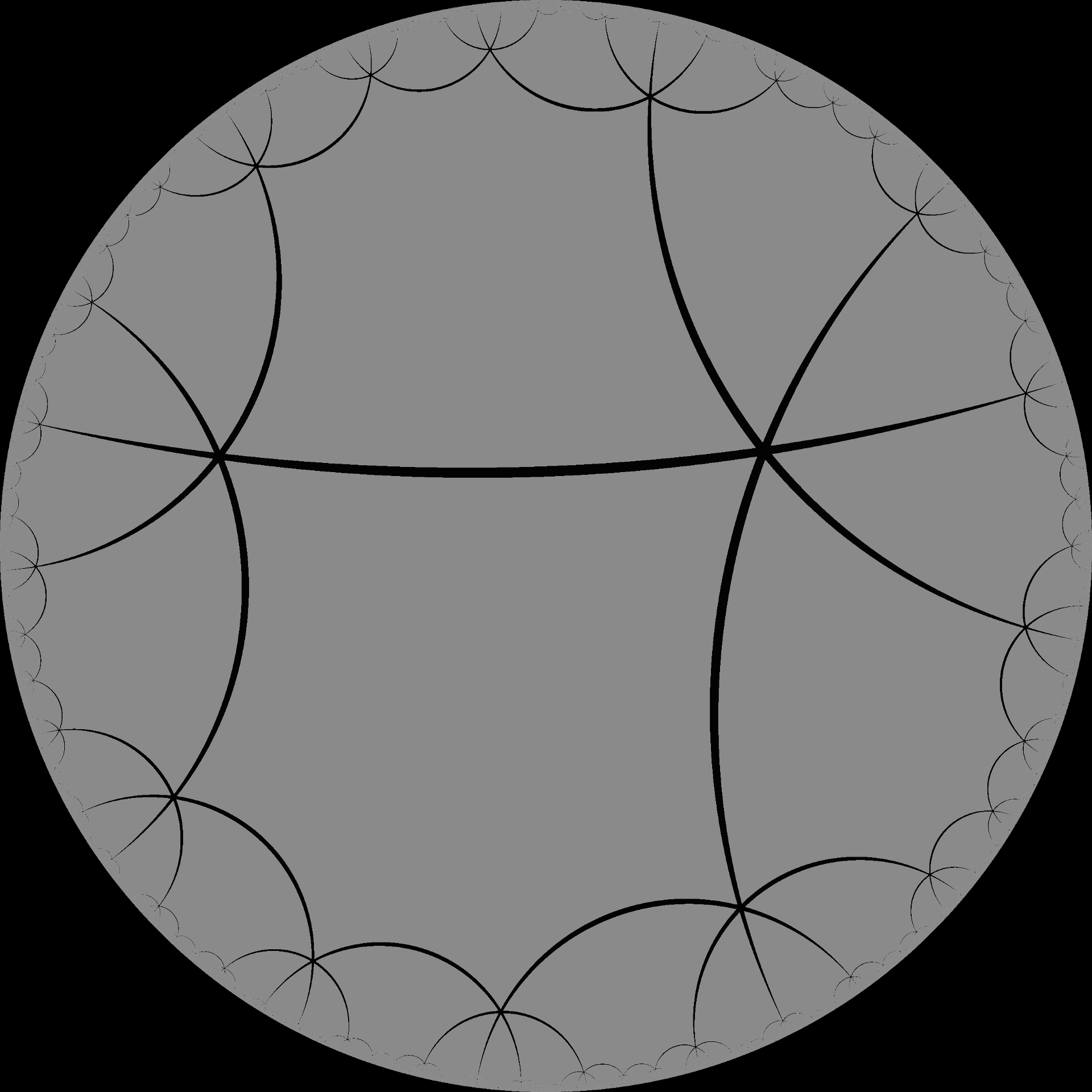}
\caption{Order-6 hexagonal tiling}
\label{fig:order 6}
\end{minipage}
\end{figure}

\item \textbf{Graphs with girth 4 or 6}. In the history of the Tur\'{a}n problem, there have been rich developments in the study of the graphs related to $4$-cycles or $6$-cycles. A natural problem is to find out when a bipartite graph $H$ containing $C_4$ (resp. $C_6$) has extremal number $\ex(n,H)=\Theta(n^{3/2})$ (resp. $\Theta(n^{4/3})$).
    
\end{itemize}

   \section*{Acknowledgement}
  We would like to express our gratitude to the anonymous reviewer for the detailed and constructive comments which are helpful to the improvement of the technical presentation of this paper.

\bibliographystyle{abbrv}
\bibliography{TightTuran}

\end{document}